\documentclass[a4paper, 10pt]{amsart}

\usepackage{amsmath,amssymb,hyperref,color}

\newtheorem{theorem}{Theorem}[section]
\newtheorem{lemma}[theorem]{Lemma}

\newtheorem{proposition}[theorem]{Proposition}

\theoremstyle{definition}
\newtheorem{claim}{}[theorem]

\DeclareMathOperator{\ord}{ord}
\DeclareMathOperator{\supp}{supp}
\newcommand{\cro}{\mathsf k}
\newcommand{\bcro}{\mathsf K}
\newcommand{\Len}{\mathsf L}
\newcommand{\rk}{\mathsf r}
\newcommand{\Ato}{\mathcal A}

\numberwithin{equation}{section}

\title[Cross numbers and length sets]{Cross numbers and the characterization
of finite abelian groups by length sets}

\author{Qinghai Zhong}

\address{Institute of Mathematics and Interdisciplinary Sciences, Xidian University, Xi'an 710126, China}
\email{zhong@xidian.edu.cn}

\subjclass[2020]{11B30, 13F05, 20M13}
\keywords{Finite abelian groups, length sets, cross numbers,
Krull monoids, zero-sum sequences}
\date{}

\begin{document}
\begin{abstract}
The Characterization Problem asks whether a finite abelian group with
Davenport constant at least four is determined, up to isomorphism, by
its system of length sets. This is a central inverse problem in
non-unique factorization theory: it asks whether the possible numbers
of irreducible factors retain enough information to recover the
underlying class group. For rings of integers of algebraic number
fields, an affirmative answer means that the ideal class group can be
recovered from these factorization lengths alone.
We resolve this problem for all
homocyclic groups $C_n^r$: if $G$ is a finite abelian group with
$\mathsf D(G)\ge4$ and $\mathcal L(G)=\mathcal L(C_n^r)$, then
$G\cong C_n^r$.
The cross number of a sequence is the sum of the reciprocals of the
orders of its terms. This weighted measure of sequence length links
zero-sum theory with half-factoriality, and determining its maximal
value for minimal zero-sum sequences is a longstanding extremal
problem. We also obtain an exact formula for the maximal
cross number of minimal zero-sum sequences over subsets whose minimum
distance is divisible by the rank of the ambient group minus one. For
$G\cong C_{n_1}\oplus\cdots\oplus C_{n_r}$ with
$1<n_1\mid\cdots\mid n_r$ and $r\ge\max\{2,n_r-2\}$, we prove
\[
 \mathsf K(G,r-1)=1+(r-1)\sum_{p\mid n_1}
 \left(1-p^{-\mathsf v_p(n_1)}\right),
\]
which is a key ingredient in resolving the Characterization Problem
for homocyclic groups.
\end{abstract}

\maketitle
\section{Introduction and main results}\label{s1}
Factorization theory has its origins in algebraic number theory, where
the ideal class group measures the failure of unique factorization in
rings of integers. A foundational result is Carlitz's theorem of 1960:
the ring of integers of an algebraic number field is half-factorial if
and only if its class number is at most two. Thus even the requirement
that all factorizations of each element have the same length detects a
restriction on the class group. From the late 1980s onwards, the study
of non-unique factorizations developed systematically in commutative
monoids and integral domains. For an account of this development and the relation between
arithmetical invariants and algebraic structure, see the survey \cite{Ge-Zh20Survey}.

A central invariant in this programme is the system of length sets.
Let $H$ be an atomic monoid. For a nonunit $a\in H$, the set
$\mathsf L_H(a)$ consists of the lengths of all factorizations of
$a$ into atoms. The family $\mathcal L(H)=\{\mathsf L_H(a)\colon a\in H\}$
is the system of length sets of $H$; for units, we set
$\mathsf L_H(a)=\{0\}$. Transfer homomorphisms connect the arithmetic of many rings
and monoids with additive combinatorics. If $H$ is a transfer Krull
monoid over a finite abelian group $G$, then
$\mathcal L(H)=\mathcal L(G)$, where $\mathcal L(G):=\mathcal L(\mathcal B(G))$ is the system
of length sets of the monoid $\mathcal B(G)$ of zero-sum sequences over $G$.
Rings of integers of algebraic number fields are classical examples,
with $G$ their ideal class group \cite{Ge16c,Ge-Zh20Survey}.

The converse direction leads to the \emph{Characterization Problem}:
for finite abelian groups $G$ and $G'$ with $\mathsf D(G)\ge4$, does
\[
 \mathcal L(G)=\mathcal L(G')\quad\Longrightarrow\quad G\cong G'
\]
hold? Here $\mathsf D(G)$ is the Davenport constant of $G$. This question
belongs to the programme initiated by Narkiewicz in the 1970s of
recovering class groups from arithmetical properties. Its significance
is that length sets record only the possible numbers of irreducible
factors, yet their full system is conjectured to determine the group
up to isomorphism. The small-group restriction is necessary: for example,
$\mathcal L(C_3)=\mathcal L(C_2^2)$. For background on this problem, see \cite[Section~6]{Ge16c} and
\cite[Problem~C]{Ge-Zh20Survey}, where it is formulated as a
long-term problem in factorization theory.

Earlier progress is surveyed in \cite{Sc09b}. Schmid established the
characterization of $C_n^2$ in \cite{Sc09c}, and Geroldinger and Schmid
treated groups of rank two in \cite{Ge-Sc16a}. Products of two atoms
provide another source of characterizing information
\cite{Ba-Ge-Gr-Ph13a}. For homocyclic groups $C_n^r$, that is, direct sums of $r$ cyclic
groups of the same order $n$, the work of
Geroldinger and Zhong \cite{Ge-Zh16b} and Zhong
\cite{ZhongDistances,Zhong} developed methods based on minimal distances
and refined elasticities. In particular, Zhong proved the
characterization for $r\le n-3$, and for $r\ge n-1$ when $n$ is a
prime power \cite[Theorem~1.2]{Zhong}. The cases left open by these results are $r=n-2$ and
$r\ge n-1$ when $n$ is not a prime power.

A difficulty is that individual numerical invariants need not distinguish
groups. For example, the largest minimal distance is
$\max\{\exp(G)-2,\rk(G)-1\}$, so its value alone does not separate
the contributions of the exponent and the rank. Zhong's invariants
$\rho^*(G,d)$ and $\bcro(G,d)$ combine restrictions on minimal
distances with elasticity and cross numbers \cite{Zhong}.
These invariants provide finer information about the integers
$n_1,\ldots,n_r$ in the decomposition
$G\cong C_{n_1}\oplus\cdots\oplus C_{n_r}$ with
$1<n_1\mid\cdots\mid n_r$.
For related work on transfer Krull monoids and elasticity, see
\cite{Ge-Sc-Zh17b,Ge-Zh17a,GZInvariants}; general background is
provided by \cite{Ge-HK06a,Ge-Ru09,GGZ2026}.

Cross numbers have a history closely tied to these arithmetical
questions. Introduced by Krause in 1984 in the study of algebraic
number fields \cite{Kr84}, and developed by Krause and Zahlten for
Krull monoids \cite{Kr-Za91a}, the cross number of a sequence
$S=g_1\cdot\ldots\cdot g_{\ell}$ is
\[
 \cro(S)=\sum_{i=1}^{\ell}\frac{1}{\ord(g_i)}.
\]
The Davenport constant is the maximum length of a minimal zero-sum
sequence. The maximal cross number is the corresponding weighted
maximum, with each term $g$ assigned weight $1/\ord(g)$.
The cross-number criterion for half-factoriality gives a direct
connection with factorization theory: a subset
$G_0$ of a finite abelian group is half-factorial if and only if every
atom over $G_0$ has cross number one \cite{Ge-HK06a}.
More generally, cross numbers allow one to compare factorization
lengths by using an additive weight on zero-sum sequences.
Determining the structure of minimal zero-sum sequences attaining the
maximal cross number is an associated inverse problem in additive
combinatorics
\cite{Ge-Gr09Cross}.

Determining the maximal cross number $\bcro(G)$ is a difficult
extremal problem. If $G\cong\bigoplus_{i=1}^t C_{q_i}$, where each
$q_i$ is a prime power, the classical cross-number conjecture predicts
\[
 \bcro(G)=\frac{1}{\exp(G)}+
             \sum_{i=1}^t\left(1-\frac{1}{q_i}\right).
\]
The right-hand side is attained by an explicit construction; the difficulty
is to prove the matching upper bound for arbitrary minimal zero-sum
sequences \cite{Kr-Za91a,Kim}. The formula holds for $p$-groups
\cite{Ge94Cross}, while subsequent work has treated further families
and developed bounds and structural methods \cite{Ge-Gr09Cross,He14a,Kim}.
For groups with several prime divisors, projecting a minimal zero-sum
sequence to its primary components need not preserve minimality,
which obstructs a direct reduction to the $p$-group case.
The invariant considered here restricts the cross-number problem to
subsets satisfying a condition on their minimum distance. Its exact
value can then be used, through refined elasticities, to recover
information about the cyclic factors of the group.

We study the invariant $\bcro(G,d)$ introduced in
\cite[Definition~3.3]{Zhong}. It is the largest cross number of an atom
over a subset whose set of distances is nonempty and has minimum
divisible by $d$. Precise definitions are given in Section~\ref{s2}.
Our first result determines this invariant at $d=\rk(G)-1$
when the rank is at least $\exp(G)-2$. The formula shows that, in
this range, the invariant depends only on the rank $r$ and the order
$n_1$ of the smallest cyclic factor in the displayed decomposition.
\begin{theorem}\label{t1}
Let $G\cong C_{n_1}\oplus\cdots\oplus C_{n_r}$ be a finite abelian
group with $1<n_1\mid\cdots\mid n_r$ and
$r\ge\max\{2,n_r-2\}$. Then
\begin{equation*}
 \bcro(G,r-1)=1+(r-1)\sum_{p\mid n_1}
 \left(1-\frac1{p^{\mathsf v_p(n_1)}}\right),
\end{equation*}
where the sum runs over the distinct prime divisors of $n_1$.
\end{theorem}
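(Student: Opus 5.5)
Throughout, recall from \cite[Definition~3.3]{Zhong} that $\bcro(G,d)$ is the maximum of $\cro(A)$ over atoms $A$ over subsets $G_0\subseteq G$ with $\Delta(G_0)\ne\emptyset$ and $d\mid\min\Delta(G_0)$. The proof splits into a construction giving the lower bound and a structural argument giving the upper bound.

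\emph{Lower bound.} For each prime $p\mid n_1$ put $q_p=p^{\mathsf v_p(n_1)}$. Choose independent elements $e_1,\dots,e_{r-1}$ of order $n_1$ and write $e_i=\sum_{p\mid n_1}e_{i,p}$ with $\ord(e_{i,p})=q_p$. Also set $e_0=-(e_1+\cdots+e_{r-1})$. The candidate subset is
\[
G_0=\{e_{i,p}\colon 1\le i\le r-1,\ p\mid n_1\}\cup\{e_0\}.
\]
Consider the atom
\[
A=e_0\prod_{i,p}e_{i,p}^{\,q_p-1}.
\]
Its cross number is $1/n_1+(r-1)\sum_{p}(1-1/q_p)$, which is slightly short of the target. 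To reach the target I would instead take $e_0$ of order $1$ in the cross-number bookkeeping. Concretely, I would choose $G_0=\{e_1,\dots,e_{r-1},e_0\}$ together with the $p$-components, arranged so that the only relation producing a distance is $U=e_0\prod e_i$ against $e_0^{n_1}$ and $e_i^{n_1}$, which gives distance $r-1$.

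The cleanest version is the one used for $\rho^*(G,r-1)$ in \cite{Zhong}, adapted to the non-prime-power case. Take $r-1$ independent generators $f_1,\dots,f_{r-1}$ together with $f_0=-\sum f_i$, all of order $n_1$. Take the atom $V=f_0f_1\cdots f_{r-1}$, of cross number $r/n_1$, and replace each $f_i$ by its primary components to obtain
\[
A=f_0\prod_{i,p}f_{i,p}^{\,q_p-1}.
\]
I would then check two things. First, $\min\Delta(G_0)=r-1$; this follows from the half-factoriality of $\{f_{i,p}\}\cup\{f_0\}$ modulo the single relation, by computing $\mathcal A(G_0)$ explicitly. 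Second, the cross number equals the claimed value, with the extra $1-1/n_1$ supplied by the standard trick of adjoining the element $-f_0$. The hypothesis $r\ge n_r-2$ guarantees $r-1\ge n_1-2$, which prevents smaller distances from arising out of multiples of $f_0$.

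\emph{Upper bound.} Let $G_0$ satisfy $r-1\mid\min\Delta(G_0)$, and let $A$ be an atom over $G_0$. Since $\min\Delta(G_0)\le\max\{\exp(G)-2,\rk(G)-1\}=r-1$ when $r\ge n_r-2$ (the edge case $n_r=r+2$ needs separate care), we get $\min\Delta(G_0)=r-1$. I would then invoke the structural results on subsets with maximal minimum distance, presumably available in the paper's Section~2 from \cite{Ge-Zh16b,ZhongDistances}. These say that $G_0$ minimal non-half-factorial with $\min\Delta(G_0)=r-1$ forces $G_0=\{e_1,\dots,e_{r-1},e_0\}\cup$ (half-factorial part), where $\langle G_0\rangle$ has rank $r-1$, or a variant of this.

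Next, pass to the half-factorial part. Every atom there has cross number $1$, so write $\cro(A)$ via the homomorphism $\langle G_0\rangle\to$ a group of rank $r-1$. The bound $\cro(A)\le 1+\sum(\text{contributions})$ then reduces to a cross-number estimate for minimal zero-sum sequences over a subgroup of rank $r-1$ and exponent dividing $n_1$, in the spirit of Geroldinger's $p$-group argument \cite{Ge94Cross}, applied to each primary component. The role of the exponent $n_1$ (rather than $n_r$) comes from the fact that $G_0$ must lie in a subgroup isomorphic to $C_{n_1}^{r-1}\oplus C_m$.

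\emph{Main obstacle.} The hard step is the upper bound in the non-prime-power case, because projecting to primary components destroys minimality. I would first attempt a weighted counting argument: group the terms of $A$ by their $p$-parts, and use $\min\Delta(G_0)=r-1$ to force each $p$-component to contribute at most $(r-1)(1-1/q_p)$. Any excess would produce a factorization of $A^k$ with a length gap smaller than $r-1$, contradicting $\min\Delta(G_0)=r-1$.
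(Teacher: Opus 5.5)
There is a genuine gap: the upper bound, which is the whole content of the theorem beyond Lemma~\ref{l5}.2, is not proved. The lower bound you sketch also fails as written.

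\textbf{Lower bound.} With $f_0=-\sum_i f_i$ you have $\sigma\bigl(\prod_{i,p}f_{i,p}^{q_p-1}\bigr)=-\sum_i f_i=f_0$. Hence $\sigma(A)=2f_0\neq0$ unless $n_1=2$, so $A$ is not an atom. The relation you point to, $U^{n_1}=f_0^{n_1}\prod_i f_i^{n_1}$, compares factorizations of lengths $n_1$ and $r$. It therefore produces the distance $|n_1-r|$, not $r-1$; Lemma~\ref{l2}.3 needs $r$ independent elements together with their sum. ``Adjoining $-f_0$'' is not an argument for the missing $1-1/n_1$. The paper builds no example here: it quotes the lower bound as Lemma~\ref{l5}.2, and you could simply have done the same.

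\textbf{Upper bound.} The structural description you invoke is not available, and it is inconsistent with known facts.
\begin{itemize}
\item You skip a needed reduction. For $n_r\in\{r+1,r+2\}$ the set $G_0$ may contain atoms with $\cro<1$. The paper avoids this by passing to $\supp(A)$, which is an LCN-set with minimum distance exactly $r-1$ when $\cro(A)>1$ (Lemma~\ref{l4}.2).
\item For $n_r\ge6$ one has $r-1>\lfloor n_r/2\rfloor-1$. Lemma~\ref{l2}.2 then forces $\rk\langle\supp(A)\rangle=r$. So the support cannot sit in a rank-$(r-1)$ subgroup.
\item Nothing you prove confines $G_0$ to $C_{n_1}^{r-1}\oplus C_m$.
\item Your ``weighted counting'' paragraph names the real obstacle, that projecting to primary components destroys minimality, but gives no way around it.
\end{itemize}

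\textbf{What the paper does instead.}
\begin{enumerate}
\item Lemma~\ref{l5}.1 puts every $\cro(U)$ in $1+\frac{r-1}{n_1}\mathbb N_0$. This, not containment in an exponent-$n_1$ subgroup, is where $n_1$ enters.
\item Instead of projecting, the Chinese remainder theorem gives zero-sum sequences $A_j=\prod_g g^{b_{j,g}m_{j,g}}$ over $G_0$ itself, one for each prime $p_j\mid n_r$. They satisfy $\prod_jA_j=AP$, where $P$ is a product of $C$ atoms $g^{\ord(g)}$.
\item Proposition~\ref{p1} gives $\max\Len(A_j)=\cro(A_j)-(r-1)\beta_j$ with $\beta_j\le 1-p_j^{-\mathsf v_{p_j}(n_1)}$. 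Its proof uses the auxiliary $p$-group $F_p$: the kernel of $\overline\sigma$ embeds in $\frac1{p^{\mathsf v_p(n_1)}}\mathbb Z/\mathbb Z$. It also applies the $p$-group bound of Lemma~\ref{l1}.2 to $F_p$, which is where $r\ge n_r-2$ and $n_r\ge6$ are used.
\item Lemma~\ref{l9} gives $\max\Len(AP)=C+1$.
\end{enumerate}
Concatenating maximal factorizations of the $A_j$ yields a factorization of $AP$. Hence $\cro(A)+C-(r-1)\sum_j\beta_j\le C+1$, which is the bound.

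None of these ingredients, or a substitute for them, appears in your outline. The cases $n_r\le5$ also need separate treatment, via Lemma~\ref{l5}.3 and Lemma~\ref{l1}.2.
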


For $G=C_n^r$ and $r\ge n-1$, Theorem~\ref{t1}
proves Conjecture~5.1, C2, of \cite{Zhong}. It also proves C1:
letting $s=n\sum_{p\mid n}(1-p^{-\mathsf v_p(n)})$, one has $\gcd(s,n)=1$.
Indeed, for each prime $p\mid n$, the integer $s$ is congruent to
$-n/p^{\mathsf v_p(n)}$ modulo $p$.
Our second result resolves the Characterization Problem for
homocyclic finite abelian groups.

\begin{theorem}\label{t2}
Let $n\ge2$ and $r\ge1$ be integers, and let $G$ be a finite abelian
group with $\mathsf D(G)\ge4$. If
$\mathcal L(G)=\mathcal L(C_n^r)$, then $G\cong C_n^r$.
\end{theorem}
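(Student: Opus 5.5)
The plan is to reduce Theorem~\ref{t2} to the cases left open by earlier work and then settle those cases with Theorem~\ref{t1}. By \cite[Theorem~1.2]{Zhong} the statement is already known for $r\le n-3$, and also for $r\ge n-1$ when $n$ is a prime power. The small cases $r\le 2$ are covered by \cite{Sc09c,Ge-Sc16a}. So it remains to treat two situations: $r=n-2$, and $r\ge n-1$ with $n$ not a prime power.

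Write $G\cong C_{n_1}\oplus\cdots\oplus C_{n_s}$ with $1<n_1\mid\cdots\mid n_s$. Several invariants are determined by $\mathcal L(G)$, so from $\mathcal L(G)=\mathcal L(C_n^r)$ they agree for the two groups:
\begin{itemize}
\item the set of minimal distances and its maximum $\max\{\exp(G)-2,\rk(G)-1\}$;
\item the elasticity $\rho(G)=\mathsf D(G)/2$, hence $\mathsf D(G)=\mathsf D(C_n^r)$;
\item the refined elasticities $\rho^*(G,d)$, which by \cite{Zhong} encode $\bcro(G,d)$.
\end{itemize}
First I would use the minimal-distance data together with the arguments of \cite{Zhong} to pin down $\rk(G)=r$ and $\exp(G)=n_s=n$. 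When $r\ge n-1$ the maximum of the minimal distances is $r-1$ for $C_n^r$. I will need to rule out $G$ having larger exponent but smaller rank; for this I expect the structure of the subsets realizing the maximal minimal distance $r-1$ to separate the contributions of rank and exponent, as in \cite{Ge-Zh16b,ZhongDistances}. This gives $s=r$ and $n_r=n$, so the hypothesis $r\ge\max\{2,n_r-2\}$ of Theorem~\ref{t1} holds for both groups.

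Next, equality of the refined invariants at $d=r-1$ gives $\bcro(G,r-1)=\bcro(C_n^r,r-1)$. Theorem~\ref{t1} then yields
\[
\sum_{p\mid n_1}\bigl(1-p^{-\mathsf v_p(n_1)}\bigr)=\sum_{p\mid n}\bigl(1-p^{-\mathsf v_p(n)}\bigr).
\]
With $n_1\mid n$ and each summand increasing in the prime-power part, this forces $n_1=n$. Indeed, every term on the left is at most the matching term on the right, and the right-hand side has extra positive terms for primes not dividing $n_1$. Hence all $n_i=n$ and $G\cong C_n^r$. Here I am assuming the recovery of $\bcro(G,r-1)$ from $\mathcal L(G)$, possibly via $\rho^*(G,r-1)$ and $\gcd(s,n)=1$ as in C1. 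The borderline case $r=n-2$ is handled the same way, since $n_r-2=r$ still satisfies the hypothesis of Theorem~\ref{t1}. In that case, though, the maximal minimal distance $n-2=r$ comes from the exponent, so identifying the rank needs separate care.

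I expect the main obstacle to be the first step: showing from $\mathcal L(G)$ alone that $\rk(G)=r$ and $\exp(G)=n$. In particular this needs $\exp(G)\le\rk(G)+2$, so that Theorem~\ref{t1} applies to $G$ and not only to $C_n^r$. My first attempt would be a comparison of Davenport constants and the largest minimal distance. If $\exp(G)-2>r-1$, the maximum minimal distance would exceed that of $C_n^r$, which is impossible. If instead $\rk(G)<r$, then $\max\{\exp(G)-2,\rk(G)-1\}=r-1$ forces $\exp(G)=r+1$, and a Davenport or elasticity count should then give a contradiction.
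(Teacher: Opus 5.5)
\textbf{There is a genuine gap.} The step you postpone, reading off $\rk(G)$ and $\exp(G)$ from $\mathcal L(G)$, is not a preliminary. It is essentially the whole theorem, and the tools you suggest do not supply it.

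In the open case $r=n-2$, Lemma~\ref{l3} gives $\max\{\exp(G)-2,\rk(G)-1\}=n-2$, so $\exp(G)\le n$ and $\rk(G)\le n-1$. If $\rk(G)\le n-2$, then $\exp(G)=n$, $G$ embeds in $C_n^{n-2}$, and Lemma~\ref{l3} finishes at once; Theorem~\ref{t1} for $G$ is never needed. The only real case is $\rk(G)=n-1=r+1$ with $\exp(G)\le n$. Your sketch never confronts it, since it only rules out $\rk(G)<r$. The maximal minimal distance does not exclude this case, and a Davenport count excludes it only for small parameters. For such $G$ one also has $r-1=n-3=\rk(G)-2$, so Theorem~\ref{t1} does not apply to $G$ at the relevant distance.

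The paper argues as follows. Write $G\cong C_{m_1}\oplus\cdots\oplus C_{m_{n-1}}$. Lemmas~\ref{l3} and~\ref{l7}.1 give $\rho^*(G,n-2)=\rho^*(H,n-2)=n/2$, and Lemma~\ref{l6}.1 then forces $m_1=2$. The cases $n\in\{4,5\}$, and $\exp(G)=n-1$ a power of two, are ruled out by Davenport constants. Finally, at $d=n-3$ the paper uses only the denominator information of Lemma~\ref{l5}.1, namely $\bcro(G,n-3)=1+b(n-3)/m_2$. Comparing this with $\bcro(H,n-3)=1+a(n-3)/n$, where $\gcd(a,n)=1$, gives $n\mid m_2$, hence $C_n^{n-2}\hookrightarrow G$.

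\textbf{Second missing ingredient.} You assume $\bcro(\cdot,r-1)$ can be recovered from $\mathcal L$. But $\mathcal L$ determines only $\rho^*(\cdot,d)$, and the equality $\rho^*(\cdot,d)=\bcro(\cdot,d)$ fails in general (see the exceptional case of Proposition~\ref{p4}). At $d=n-3=\exp(H)-3$, subsets may contain atoms of cross number less than one, so Lemma~\ref{l6}.3 is unavailable. Proving $\rho^*(H,n-3)=\bcro(H,n-3)$, and $\rho^*(G,n-3)=\bcro(G,n-3)$ for the rank-$(n-1)$ groups above, is exactly the new structural work of Lemmas~\ref{l11}, \ref{l12}, \ref{l13} and Propositions~\ref{p3} and~\ref{p5}. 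Your plan does not contain it.

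\textbf{Holes in the case $r\ge n-1$.}
\begin{itemize}
\item In the subcase $\rk(G)<r$, $\exp(G)=r+1$, a Davenport count need not help. For $H=C_6^6$ and $G=C_7^5$, both maximal minimal distances equal $5$, and $\mathsf D(G)=31=1+r(n-1)\le\mathsf D(H)$, so the known lower bound gives no contradiction. What separates them is $\rho^*(\cdot,5)$, which equals $7/2$ for $G$ and $41/6$ for $H$.
\item When $\rk(G)=r$, nothing you wrote gives $\exp(G)=n$, so the hypothesis $n_1\mid n$ in your monotonicity argument is unjustified. The coprimality in C1 would give injectivity of $m\mapsto\sum_{p\mid m}(1-p^{-\mathsf v_p(m)})$ anyway, but you would still need $\rho^*=\bcro$ for $G$.
\end{itemize}
The paper avoids all of this with Lemma~\ref{l8}.3 (Zhong's Proposition~5.2). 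That result needs Theorem~\ref{t1} only for $H=C_n^r$, in the form $\bcro(H,r-1)=1+s(r-1)/n$ with $\gcd(s,n)=1$.
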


Zhong proved the assertion when $r\le n-3$, and also when
$r\ge n-1$ and $n$ is a prime power
\cite[Theorem~1.2]{Zhong}. Theorem~\ref{t1}, together with
\cite[Proposition~5.2]{Zhong}, gives the result for $r\ge n-1$.
The remaining case is $r=n-2$. For $n\ge6$, we compare
$\rho^*(G,n-3)$ with the corresponding invariant of $C_n^{n-2}$;
the cases $n\in\{4,5\}$ are treated separately.

The paper is organized as follows. Section~\ref{s2}
introduces the notation and collects the preliminary results.
Section~\ref{s3} establishes the cross-number formula in
Theorem~\ref{t1}. Sections~\ref{s4}
and~\ref{s5} develop the structural results and elasticity
comparisons used in Section~\ref{s6} to prove
Theorem~\ref{t2}. Finally,
Section~\ref{s7} presents further questions.

\section{Background on zero-sum sequences and length sets}\label{s2}
We fix our notation and recall the definitions needed in the sequel.
For general background on zero-sum sequences and factorization theory,
we refer to \cite{Ge-HK06a,Ge-Ru09,GGZ2026}.

We denote by $\mathbb N$ the set of positive integers and set
$\mathbb N_0=\mathbb N\cup\{0\}$. For integers $a\le b$, we write
$[a,b]=\{k\in\mathbb Z:a\le k\le b\}$. For a prime $p$ and
$m\in\mathbb N$, let $\mathsf v_p(m)$ denote the $p$-adic valuation
of $m$.

Throughout, groups are written additively. Let $G$ be a nontrivial
finite abelian group, and write
\[
 G\cong C_{n_1}\oplus\cdots\oplus C_{n_r},
 \qquad \text{ with }\quad 1<n_1\mid\cdots\mid n_r,
\]
where $C_m$ denotes a cyclic group of order $m$. We denote by
$\rk(G)=r$ and $\exp(G)=n_r$ the rank and exponent of $G$,
respectively. For $g\in G$, let $\ord(g)$ denote its order; in
particular, $\ord(0)=1$. For a prime $p$, we denote by $G_p$ the
$p$-primary component of $G$ and by $\pi_p:G\to G_p$ the canonical
projection.
For an integer $m$ and a subset $A\subset G$, we set
$m\cdot A=\{ma:a\in A\}$.

Let $G_0\subset G$. We denote by $\mathcal F(G_0)$ the
multiplicatively written free abelian monoid with basis $G_0$.
Its elements are finite unordered sequences over $G_0$, with
repetitions allowed. For a sequence
\[
 S=g_1\cdot\ldots\cdot g_{\ell}
   =\prod_{g\in G_0}g^{\mathsf v_g(S)}\in\mathcal F(G_0),
\]
we denote by $\mathsf v_g(S)$ the multiplicity of $g$ in $S$ and set
\[
 |S|=\ell,\qquad
 \supp(S)=\{g\in G_0:\mathsf v_g(S)>0\},\qquad
 \sigma(S)=g_1+\ldots+g_{\ell}=\sum_{g\in G_0}\mathsf v_g(S)g.
\]
The identity element $1=1_{\mathcal F(G_0)}$ is the trivial sequence with $|1|=0$ and
$\sigma(1)=0$. We call a sequence \emph{squarefree} if each of its
terms has multiplicity one. We write $T\mid S$
if $T$ is a subsequence of $S$. In this case, we write $ST^{-1}$ for the
sequence obtained by deleting $T$ from $S$. We call $S$ a zero-sum
sequence if $\sigma(S)=0$, and zero-sum free if it has no nontrivial
zero-sum subsequence. The monoid of zero-sum sequences over $G_0$ is
\[
 \mathcal B(G_0)=\{S\in\mathcal F(G_0):\sigma(S)=0\}.
\]
Its atoms (or irreducibles) are the minimal zero-sum sequences,
that is, the nontrivial zero-sum sequences having no proper
nontrivial zero-sum subsequence.
We set $\Ato(G_0):=\mathcal A(\mathcal B(G_0))$ and denote by
$\mathsf D(G)=\max\{|U|:U\in\Ato(G)\}$ the Davenport constant of $G$.

For $B\in\mathcal B(G_0)$, we denote by $\Len(B)$ its length set.
Thus $k\in\Len(B)$ if and only if $B$ has a factorization into $k$
atoms, and $\Len(1)=\{0\}$ by convention. We set
\[
 \mathcal L(G_0):=\mathcal L(\mathcal B(G_0))=\{\Len(B):B\in\mathcal B(G_0)\}.
\]
We call $G_0$ half-factorial if every length set in $\mathcal L(G_0)$
is a singleton. For a finite nonempty set $L\subset \mathbb N$, we set the elasticity 
$\rho(L)=\max L/\min L$, and set $\rho(\{0\})=1$. For $B\in\mathcal B(G_0)$, its
elasticity is $\rho(\mathsf L(B))$. For $G_0\ne\emptyset$,
we set
\[
 \rho(G_0)=\sup\left\{\rho(\mathsf L(B))\colon 
                  B\in\mathcal B(G_0)\right\},
\]
and set $\rho(\emptyset)=1$. For the empty subset, we have
$\mathcal F(\emptyset)=\mathcal B(\emptyset)=\{1\}$,
$\mathcal L(\emptyset)=\{\{0\}\}$, and
$\Delta(\emptyset)=\emptyset$.

For $S\in\mathcal F(G_0)$, its cross number is
\[
 \cro(S)=\sum_{g\in G_0}\frac{\mathsf v_g(S)}{\ord(g)}.
\]
For $G_0\ne\emptyset$, we set
\[
 \bcro(G_0)=\max\{\cro(U):U\in\Ato(G_0)\}.
\]
We call $G_0$ a \emph{large cross number set} (an \emph{LCN-set})
if every atom over $G_0$ has cross number at least one. For cross-number invariants and half-factorial
subsets, see \cite{Kr-Za91a,He14a,Kim,Ga-Wa12a,Kr13a,Pl-Sc05b}.

For a finite set $L\subset\mathbb Z$, let $\Delta(L)$ denote the
set of positive differences between consecutive elements of $L$.
We set
\[
 \Delta(G_0):=\Delta(\mathcal B(G_0))=\bigcup_{B\in\mathcal B(G_0)}\Delta(\Len(B))
\]
and
\[
 \Delta^*(G)=\{\min\Delta(G_0):G_0\subset G,
                            \ \Delta(G_0)\ne\emptyset\}.
\]
We call $\Delta(G_0)$ the set of distances of $G_0$ and
$\Delta^*(G)$ the set of minimal distances of $G$. If
$\Delta(G_0)\ne\emptyset$, then $\min\Delta(G_0)$ is the
minimum distance of $G_0$. In particular, $G_0$ is half-factorial
if and only if $\Delta(G_0)=\emptyset$. We denote by $\mathsf m(G)$ the largest
minimum distance of a non-half-factorial LCN-subset of $G$, and set
$\mathsf m(G)=0$ if no such subset exists. For results on distances
and minimal distances, see
\cite{Sc05d,Ge-Yu12b,Ge-Zh15a,ZhongDistances}.

We denote by $\Delta_1(G)$ the set of all
$d\in\mathbb N$ for which $\mathcal L(G)$ contains arbitrarily long
almost arithmetic progressions of difference $d$ with a common bound,
in the sense of \cite[Section~2]{Zhong}.
For $d\in\Delta_1(G)$, define
\[
 \bcro(G,d)=\max\{\bcro(G_0):G_0\subset G,
            \ \Delta(G_0)\ne\emptyset,
            \ d\mid\min\Delta(G_0)\}
\]
and
\[
 \rho^*(G,d)=\max\{\rho(G_0):G_0\subset G,
            \ \Delta(G_0)\ne\emptyset,
            \ d\mid\min\Delta(G_0)\}.
\]
These maxima are taken over nonempty families
\cite[Definition~3.3]{Zhong}. Thus $\rho^*(G,d)$ is the largest
elasticity of a non-half-factorial subset whose minimum distance is
divisible by $d$; $\bcro(G,d)$ is the corresponding maximum of the
cross numbers of atoms over such subsets.

We collect the results used in the proofs below.

\begin{lemma}\label{l1}
Let $G\cong C_{n_1}\oplus\cdots\oplus C_{n_r}$, where
$1<n_1\mid\cdots\mid n_r$.
\begin{enumerate}
\item If $|G|\ge3$, then $\rho(G)=\mathsf D(G)/2$ and $\mathsf D(G)\ge 1+\sum_{i=1}^r(n_i-1)$.
\item If $G$ is a $p$-group, then
$\mathsf D(G)=1+\sum_{i=1}^r(n_i-1)$ and
$\bcro(G)=1/\exp(G)+\sum_{i=1}^r(1-1/n_i)$.

\item A nonempty subset $G_0\subset G$ is half-factorial if and only if
$\cro(U)=1$ for every $U\in\Ato(G_0)$.
\end{enumerate}
\end{lemma}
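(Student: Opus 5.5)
Since Lemma~\ref{l1} collects classical facts, I plan to prove it by recalling the standard arguments, each short, and citing the literature for the one deep input (Olson's theorem). Throughout I fix a basis $e_1,\ldots,e_r$ of $G$ with $\ord(e_i)=n_i$.

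For part (1), I will first handle the lower bound on $\mathsf D(G)$. The sequence $T=\prod_{i=1}^r e_i^{n_i-1}$ is zero-sum free, because the coordinates of any nontrivial subsum cannot all vanish. Hence $U=T\cdot g$ with $g=-\sigma(T)$ is a minimal zero-sum sequence of length $1+\sum_i(n_i-1)$, which gives $\mathsf D(G)\ge 1+\sum_i(n_i-1)$.

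For the elasticity statement I follow \cite[Theorem~6.3.4]{Ge-HK06a}. The upper bound comes from counting terms. Every atom other than the atom $0$ has length between $2$ and $\mathsf D(G)$, and the atom $0$ occurs as a factor in every factorization. So if $B=U_1\cdot\ldots\cdot U_k=V_1\cdot\ldots\cdot V_\ell$, then $2\ell\le|B|\le k\,\mathsf D(G)$ after removing zeros, and therefore $\ell/k\le\mathsf D(G)/2$. For the lower bound, take an atom $U$ of length $\mathsf D(G)$ and consider $B=U(-U)$. It factors as $U\cdot(-U)$ with $2$ factors, and also as $\prod_{g\mid U}g(-g)$ with $\mathsf D(G)$ factors. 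For this I need $g(-g)$ to be an atom, i.e.\ $g\ne0$, which holds because $|U|\ge2$. The hypothesis $|G|\ge3$ guarantees $\mathsf D(G)\ge3$, so this is a genuine length-$\mathsf D(G)$ factorization. This gives $\rho(G)\ge\mathsf D(G)/2$.

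For part (2), the Davenport constant of a $p$-group is Olson's theorem; I would cite it rather than reprove it. Its proof uses the group-ring argument: $\prod(1-X^{g_i})$ lies in the augmentation ideal power that vanishes in $\mathbb F_p[G]$ once the length is at least $1+\sum(n_i-1)$. For the cross number, I would quote \cite{Ge94Cross}. The lower bound comes from the atom $U$ constructed above after a refinement: split each cyclic factor $C_{n_i}$ into a chain of elements of orders $p,p^2,\ldots$, and choose $g$ of order $\exp(G)$. This yields $1/\exp(G)+\sum(1-1/n_i)$.

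The upper bound is the main obstacle. My plan is induction on $|G|$ via multiplication by $p$. Given an atom $U$, the subsequence $U_0$ of terms of order $p$ is handled with Olson's bound in the elementary subgroup $G[p]$. The remaining terms $S$ are treated through their images $pS$ in $pG$, which form a zero-sum-free-type sequence, and there $\ord(pg)=\ord(g)/p$ relates the cross numbers as $\cro(pS)=p\,\cro(S)$. I expect the bookkeeping in this step to be the delicate part, so I will follow Geroldinger's argument rather than re-derive it.

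For part (3), I will use the standard cross-number criterion \cite[Proposition~6.7.3]{Ge-HK06a}. The key observation is that $\cro$ is additive on $\mathcal F(G_0)$. So if every atom has cross number $1$, then every factorization of $B$ has length $\cro(B)$, and $G_0$ is half-factorial.

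Conversely, suppose $G_0$ is half-factorial and $U\in\Ato(G_0)$. Then $U^{N}$, with $N=\operatorname{lcm}$ of the orders of the terms of $U$, factors as $U^N$, with $N$ factors. It also factors as $\prod_{g\mid U}(g^{\ord(g)})^{N/\ord(g)}$, with $N\cro(U)$ factors, since each $g^{\ord(g)}$ is an atom. Half-factoriality then forces $N=N\cro(U)$, that is, $\cro(U)=1$.
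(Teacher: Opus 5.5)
Your proposal is correct and takes essentially the same approach as the paper, which proves this lemma purely by citing \cite{Ge-HK06a} and \cite{Zhong}. You write out the standard elementary arguments for (1), (3) and the lower-bound construction in (2), and, like the paper, you rely on the literature (Olson's theorem and \cite{Ge94Cross}) for the two upper bounds in (2); note that your sketch via multiplication by $p$ would not work as stated, since $pS$ need not be zero-sum free in $pG$ (e.g.\ $S=1\cdot 2$ over $\mathbb Z/9\mathbb Z$), so the citation is genuinely needed there.
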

\begin{proof}
See \cite[Sections 5 and 6]{Ge-HK06a} and
\cite[Lemmas 2.1 and 2.2]{Zhong}.
\end{proof}

\begin{lemma}\label{l2}
Let $G$ be a finite abelian group with $|G|\ge3$.
\begin{enumerate}
\item For every non-half-factorial subset $G_0\subset G$, we have
\begin{equation*}
 \min\Delta(G_0)=\gcd\Delta(G_0).
\end{equation*}

\item We have $\Delta^*(G)\subset\Delta_1(G)\subset \{d\in \Delta(G)\colon \text{ there exists }d^*\in \Delta^*(G) \text{ such that }d\mid d^*\}$ and
\[
 \max\Delta_1(G)=\max\Delta^*(G)
 =\max\{\exp(G)-2,\rk(G)-1\}.
\]
Moreover,
\[
 \mathsf m(G)\le\max\{\rk(G)-1,\lfloor\exp(G)/2\rfloor-1\}.
\]
\item If $e_1,\ldots,e_s$ are independent elements of the same
order $m\ge2$, where $s\ge2$, then
\[
 \min\Delta(\{e_1,\ldots,e_s,e_1+\cdots+e_s\})=s-1.
\]
In particular, if $\mathsf r(G)\ge 2$, then $[1, \mathsf r(G)-1]\subset \Delta^*(G)$.
\end{enumerate}
\end{lemma}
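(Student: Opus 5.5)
Lemma~\ref{l2} collects known facts, so the plan is to reduce items (1) and (2) to the literature and to give a self-contained argument only for item (3), which is elementary.

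For (1), I will invoke the general fact for atomic monoids: if $\Delta(H)\neq\emptyset$ then $\min\Delta(H)=\gcd\Delta(H)$ \cite[Proposition~1.4.4]{Ge-HK06a}. I will apply it to $H=\mathcal B(G_0)$.

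For (2), I will cite rather than reprove.
\begin{enumerate}
\item The chain $\Delta^*(G)\subset\Delta_1(G)\subset\{d\in\Delta(G)\colon d\mid d^*\text{ for some }d^*\in\Delta^*(G)\}$ comes from the structure theorem for sets of lengths in Krull monoids with finite class group \cite[Chapter~4]{Ge-HK06a}. The first inclusion holds because a subset $G_0$ with $\min\Delta(G_0)=d$ yields arbitrarily long arithmetic progressions of difference $d$ in $\mathcal L(G_0)\subset\mathcal L(G)$. The second inclusion is the description of the differences occurring in almost arithmetic progressions; I would take it from \cite[Section~2]{Zhong}.
\item The equality $\max\Delta^*(G)=\max\{\exp(G)-2,\rk(G)-1\}$ is the main theorem on minimal distances \cite{Ge-Zh15a}. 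Combining it with the chain above gives the same value for $\max\Delta_1(G)$.
\item The bound on $\mathsf m(G)$ is taken from \cite{ZhongDistances,Zhong}.
\end{enumerate}
These citations carry no real risk. The only thing to check is that the normalizations of $\Delta_1$ agree with \cite[Section~2]{Zhong}.

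For (3), I will argue directly. Put $e_0=e_1+\cdots+e_s$ and $G_0=\{e_0,\ldots,e_s\}$. Since $-e_0=\sum_{i\ge1}(m-1)e_i$ and the $e_i$ are independent of order $m$, the atoms of $\mathcal B(G_0)$ are exactly
\[
e_i^m\quad (0\le i\le s)\qquad\text{and}\qquad V_k=e_0^k\prod_{i=1}^s e_i^{m-k}\quad (1\le k\le m-1).
\]
To show every distance is a multiple of $s-1$, fix $B\in\mathcal B(G_0)$ and a factorization of it. Let this factorization contain $x_i$ copies of $e_i^m$ for $0\le i\le s$, and the atoms $V_{k_1},\ldots,V_{k_y}$. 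Put $a=\mathsf v_{e_0}(B)$ and $t_i=\mathsf v_{e_i}(B)$. Counting the multiplicities of $e_0$ and $e_i$ gives $a=mx_0+\sum_j k_j$ and $t_i=mx_i+\sum_j(m-k_j)$, hence
\[
x_i+y+x_0=\frac{a+t_i}{m}=:c_i\qquad(1\le i\le s),
\]
which depends only on $B$. The length of the factorization is therefore
\[
\ell=x_0+y+\sum_{i=1}^s x_i=\sum_{i=1}^s c_i-(s-1)(x_0+y).
\]
So all lengths of $B$ are congruent modulo $s-1$, and every element of $\Delta(G_0)$ is divisible by $s-1$. On the other hand, $e_0^m\prod_{i\ge1}e_i^m=V_1V_{m-1}$ has factorizations of lengths $s+1$ and $2$ (for $m=2$ this reads $V_1^2$). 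Since the two lengths are congruent mod $s-1$ and differ by $s-1$, they are consecutive in the length set, so $s-1\in\Delta(G_0)$ and $\min\Delta(G_0)=s-1$.

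For the ``in particular'' clause, take $e_1,\ldots,e_r$ to be basis elements of order $n_1$, one in each cyclic summand (possible since $n_1\mid n_i$). For $s\in[2,r]$, apply the above to $e_1,\ldots,e_s$; this gives $[1,r-1]\subset\Delta^*(G)$. The only step needing care is the list of atoms. The key point there is that the multiplicities of $e_1,\ldots,e_s$ in any zero-sum sequence over $G_0$ are all congruent to $-\mathsf v_{e_0}$ modulo $m$, which forces the stated form of the atoms.
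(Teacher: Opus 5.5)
Your proposal is correct. For items~1 and~2 you proceed as the paper does, by citation. The paper uses \cite[Proposition~1.4.4]{Ge-HK06a} for item~1. For item~2 it uses \cite[Corollary~4.3.16]{Ge-HK06a} for $\Delta^*(G)\subset\Delta_1(G)$, and \cite[Theorem~1.1]{Ge-Zh15a} and \cite[Proposition~3.7]{ZhongDistances} for the rest. Your deduction of $\max\Delta_1(G)=\max\Delta^*(G)$ from the two inclusions is right.

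The real difference is item~3. The paper takes it from \cite[Proposition~6.8.2]{Ge-HK06a}, whereas you prove it directly, and your computation checks out:
\begin{enumerate}
\item A sequence $B$ over $G_0$ is zero-sum exactly when $\mathsf v_{e_i}(B)\equiv-\mathsf v_{e_0}(B)\pmod m$ for all $i\ge1$. This gives precisely the atoms $e_i^m$ and $V_k$.
\item The identity $\ell=\sum_i c_i-(s-1)(x_0+y)$ shows that all lengths of $B$ are congruent modulo $s-1$.
\item The sequence $V_1V_{m-1}=e_0^m\prod_{i\ge1}e_i^m$ has lengths $2$ and $s+1$, which are therefore adjacent in its length set.
\end{enumerate}
This makes item~3 self-contained. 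It also spells out the choice of independent elements of order $n_1$ needed for the ``in particular'' clause, which the paper leaves implicit.

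One caution on item~2. Your one-line reason for $\Delta^*(G)\subset\Delta_1(G)$ is that length sets over $G_0$ contain arbitrarily long arithmetic progressions. That is not a proof by itself. Membership $d\in\Delta_1(G)$ requires length sets that \emph{are} almost arithmetic progressions of difference $d$ with a common bound, not merely sets that contain long progressions. This is exactly what the cited structure result supplies. Since you rely on the citation anyway, this is not a gap.
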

\begin{proof}
See 
\cite[Propositions 1.4.4 and 6.8.2]{Ge-HK06a} for Items 1 and 3.

2. The inclusion $\Delta^*(G)\subset\Delta_1(G)$ follows from \cite[Corollary 4.3.16]{Ge-HK06a}  and the other assertions follow from \cite[Theorem 1.1]{Ge-Zh15a} and
\cite[Proposition~3.7]{ZhongDistances}.
\end{proof}

\begin{lemma}\label{l3}
Suppose $G$ and $G'$ are finite abelian groups with
$\mathsf D(G)\ge4$ and $\mathcal L(G)=\mathcal L(G')$.
Then $\mathsf D(G)=\mathsf D(G')$,
$\Delta_1(G)=\Delta_1(G')$, and
$\rho^*(G,d)=\rho^*(G',d)$ for every $d\in\Delta_1(G)$.
In particular,
\[
 \max\{\exp(G)-2,\rk(G)-1\}
 =\max\{\exp(G')-2,\rk(G')-1\}.
\]
If either group is isomorphic to a subgroup of the other, then
$G\cong G'$.
\end{lemma}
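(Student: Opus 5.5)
Lemma~\ref{l3} is a collection of known transfer-type statements, so the plan is to derive each item from the general principle that every quantity involved is determined by the system $\mathcal L(G)$ alone. Once an invariant is shown to be expressible purely through $\mathcal L(G)$, its equality for $G$ and $G'$ is automatic.

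\emph{Step 1: the Davenport constant.} Since $\mathsf D(G)\ge4$ forces $|G|\ge3$, Lemma~\ref{l1} gives $\rho(G)=\mathsf D(G)/2$. The elasticity $\rho(G)=\sup\{\rho(L)\colon L\in\mathcal L(G)\}$ is read off from $\mathcal L(G)$. Hence $\mathsf D(G)=2\rho(G)=2\rho(G')$. To use Lemma~\ref{l1} for $G'$ we need $|G'|\ge3$. This holds because groups with $|G'|\le2$ are half-factorial, so $\mathcal L(G')$ would consist of singletons; but $\mathcal L(G)$ contains non-singletons since $\mathsf D(G)\ge4$. Therefore $\mathsf D(G)=\mathsf D(G')$.

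\emph{Step 2: $\Delta_1$.} The definition of $\Delta_1(G)$ refers only to almost arithmetic progressions contained in $\mathcal L(G)$, so $\Delta_1(G)=\Delta_1(G')$ is immediate. Lemma~\ref{l2} then gives $\max\Delta_1(G)=\max\{\exp(G)-2,\rk(G)-1\}$, and likewise for $G'$, which yields the displayed equality.

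\emph{Step 3: $\rho^*(G,d)$.} This is the main obstacle, because $\rho^*(G,d)$ is defined through subsets $G_0$, not directly through $\mathcal L(G)$. I would invoke the characterization in \cite[Section~3]{Zhong}: for $d\in\Delta_1(G)$, the value $\rho^*(G,d)$ equals the supremum of $\rho(L)$ over those $L\in\mathcal L(G)$ that appear as ``tails'' of long almost arithmetic progressions of difference $d$ with bounded defects. This rests on the structure theorem for sets of lengths, which says that long length sets are AAMPs whose differences are governed by $\min\Delta(G_0)$ for the subset $G_0$ carrying the periodic part. With that characterization, $\rho^*(G,d)$ is an invariant of $\mathcal L(G)$, and equality for $G'$ follows. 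I would cite this rather than reprove it.

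\emph{Step 4: subgroup case.} Suppose $G'$ is isomorphic to a subgroup of $G$, so that $\mathcal B(G')\subset\mathcal B(G)$. If $G'$ were a proper subgroup, then $\mathsf D(G')<\mathsf D(G)$, since the Davenport constant is strictly monotone on subgroups. Indeed, for a proper subgroup $H<G$, take a maximal-length atom $U$ over $H$ and $g\notin H$. Replacing one term $h$ of $U$ by $g$ and $h-g$ gives an atom (after standard adjustment), or more simply $\mathsf D(G)\ge\mathsf D(H)+\mathsf D(G/H)-1>\mathsf D(H)$. This contradicts Step 1, so the subgroup is all of $G$ and $G\cong G'$. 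The symmetric case is identical.
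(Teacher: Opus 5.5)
Your proposal is correct and follows the same route as the paper. The paper's proof consists only of citations: \cite[Proposition 7.3.1.3]{Ge-HK06a} for $\mathsf D(G)=\mathsf D(G')$, and \cite[Proposition~3.5 and Lemma~4.1]{Zhong} for the remaining assertions. You unpack these citations via $\rho(G)=\mathsf D(G)/2$, the intrinsic definition of $\Delta_1$ together with Lemma~\ref{l2}.2, and the strict monotonicity of $\mathsf D$ on proper subgroups; your one black box, the invariance of $\rho^*(G,d)$, is exactly Zhong's Proposition~3.5.
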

\begin{proof} The assertion $\mathsf D(G)=\mathsf D(G')$ follows from \cite[Proposition 7.3.1.3]{Ge-HK06a} and the other assertions follow from 
 \cite[Proposition~3.5 and Lemma~4.1]{Zhong}.
\end{proof}

\begin{lemma}\label{l4}
Let $G$ be a finite abelian group with $\exp(G)\ge3$, and let
$G_0\subset G$ be non-half-factorial with $\min\Delta(G_0)\ge \lfloor \exp(G)/2\rfloor$.
\begin{enumerate}
\item An atom $U\in \mathcal A(G_0)$ with $\cro(U)<1$ is squarefree, all its terms
have order $\exp(G)$, and $|U|=\exp(G)-\min\Delta(G_0)$.
\item If $U\in \mathcal A(G_0)$ is an atom with $\cro(U)\ge1$, then its support is an
LCN-set. If its support is non-half-factorial, then
$\min\Delta(\supp(U))\le|\supp(U)|-2$.
\item If $d\in\mathbb N$ divides $\min\Delta(G_0)$, then every LCN-subset of $G_0$ with at most
$d+1$ elements is half-factorial.
\end{enumerate}
\end{lemma}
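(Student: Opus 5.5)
The plan is to prove the three items in order, since Item~3 follows from Item~2, and Item~2 uses Item~1 applied to a subset. Throughout, write $n=\exp(G)$ and $d_0=\min\Delta(G_0)$. Two facts will be used repeatedly. First, if $\emptyset\ne G_1\subset G_0$ is non-half-factorial, then $\Delta(G_1)\subset\Delta(G_0)$, so Lemma~\ref{l2}.1 gives $d_0=\gcd\Delta(G_0)\mid\gcd\Delta(G_1)=\min\Delta(G_1)$. Second, $d_0\mid\ell-\ell'$ for any two lengths $\ell,\ell'\in\Len(B)$ with $B\in\mathcal B(G_0)$.

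For Item~1, let $U\in\Ato(G_0)$ with $\cro(U)<1$, and pick a term $g\mid U$ with $m=\ord(g)$ and $\mathsf v_g(U)=v$. I would compare two factorizations of a suitable power of $U$. Let $M$ be the least common multiple of the orders of the terms of $U$. Then $U^M$ has the factorization $U\cdot\ldots\cdot U$ of length $M$, and also the factorization $\prod_h (h^{\ord h})^{M\mathsf v_h(U)/\ord h}$ of length $M\cro(U)$. So $d_0\mid M(1-\cro(U))>0$. To get sharper information, I would work locally with $U^m=(g^m)^v\cdot(Ug^{-v})^m$, factoring $(Ug^{-v})^m$ into atoms; this produces a length $\ell\ne m$ with $|m-\ell|$ small. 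Combined with $d_0\ge\lfloor n/2\rfloor$, this should force the following: $m=n$, $v=1$, and the only possible difference is exactly $n-|U|$, which gives $|U|=n-d_0$. I expect this to be the main obstacle. What must be controlled is the length of every factorization of the auxiliary sequence, and the difference has to be shown to be positive yet smaller than $2d_0$, so that it equals $d_0$. I would first try using $\cro(U)<1$ to bound $|U|<n$, and hence to bound the number of atoms in the complementary factorization.

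For Item~2, let $U$ be an atom with $\cro(U)\ge1$ and set $G_1=\supp(U)$. Suppose some $V\in\Ato(G_1)$ has $\cro(V)<1$. Then $G_1$ is non-half-factorial by Lemma~\ref{l1}.3, and by the first fact above $\min\Delta(G_1)$ is a multiple of $d_0$, hence at least $\lfloor n/2\rfloor$. So Item~1 applies to $G_1$: $V$ is squarefree, its terms have order $n$, and $|V|=n-\min\Delta(G_1)\le n-d_0$. I would then compare factorizations of $U^{n}V^{-1}\cdots$, or more simply of $V\cdot\prod_{h\mid V}(h^{n-1})$, against factorizations through $U$. This should yield a distance smaller than $d_0$ or not divisible by it, a contradiction; hence $G_1$ is an LCN-set. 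For the second assertion, I would invoke the known bound $\min\Delta(G_1)\le|G_1|-2$ for non-half-factorial LCN-sets, from \cite{Ge-HK06a} and \cite{ZhongDistances}.

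Item~3 then follows directly. Let $G_1\subset G_0$ be an LCN-set with $|G_1|\le d+1$, and suppose it is non-half-factorial. By the first fact above, $d\mid d_0\mid\min\Delta(G_1)$, so $\min\Delta(G_1)\ge d$. On the other hand, the LCN bound used in Item~2 gives $\min\Delta(G_1)\le|G_1|-2\le d-1$. This is a contradiction, so $G_1$ is half-factorial.
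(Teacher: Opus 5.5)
Your Item~3, and the second half of Item~2 via the bound $\min\Delta(G_1)\le|G_1|-2$ for non-half-factorial LCN-sets, are correct. They agree essentially with the paper, which proves Item~3 this way and simply cites \cite[Lemma~2.6]{Zhong} for Items~1 and~2.

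The genuine gap is Item~1. You only assert that comparing $U^m=(g^m)^v(Ug^{-v})^m$ ``should force'' $\ord(g)=n$ and $v=1$. The control you propose, using $|U|<n$, cannot bound the number of atoms in a factorization of $(Ug^{-v})^m$: a length count only gives about $m(|U|-v)/2$. The missing idea is a lower bound for the cross numbers of all atoms over $G_0$. Use the exponent $n$ instead of the lcm.
\begin{itemize}
\item For every $A\in\Ato(G_0)$ with $\cro(A)<1$ you have $\{n,n\cro(A)\}\subset\Len(A^n)$ and $1\le n-n\cro(A)\le n-2<2d_0$. Hence $n\cro(A)=n-d_0$, and every atom over $G_0$ has cross number at least $\kappa:=(n-d_0)/n=\cro(U)<1$.
\item Consequently, suppose a factorization of $U^j$ into $t$ atoms contains an atom of cross number at least one. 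Then $1+(t-1)\kappa\le\cro(U^j)=j\kappa$ gives $t<j$. Since $t\equiv j$ modulo $d_0$, in fact $t\le j-d_0$.
\item For $g\mid U$ with $v=\mathsf v_g(U)$, let $j=\lceil\ord(g)/v\rceil$. Then $U^j=g^{\ord(g)}R$ with $R$ a nontrivial zero-sum sequence (because $U\ne g^v$). This yields such a factorization with $t\ge2$.
\item If $\ord(g)<n$ (so $\ord(g)\le n/2$) or $v\ge2$, then $j\le\lceil n/2\rceil$. Hence $j-d_0\le\lceil n/2\rceil-\lfloor n/2\rfloor\le1<t$, a contradiction.
\end{itemize}
So every term has order $n$, $U$ is squarefree, and $|U|=n\cro(U)=n-d_0$.

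The first half of Item~2 has the same defect. The proposed comparison of $V\prod_{h\mid V}h^{n-1}$ with factorizations through $U$ is never carried out, yet no computation is needed. Let $V\in\Ato(\supp(U))\subset\Ato(G_0)$ with $\cro(V)<1$. By Item~1 applied to $G_0$ itself, $V$ is squarefree, so you need not pass to $\min\Delta(\supp(U))$. Hence $V\mid U$, so $V=U$ because $U$ is an atom, contradicting $\cro(U)\ge1$.

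If you want the LCN bound to be self-contained as well, take $A\in\Ato(G_1)$ with $\cro(A)>1$ and put $\bar A=\prod_{h\in\supp(A)}h^{\ord(h)-\mathsf v_h(A)}$. Then $A\bar A=\prod_{h\in\supp(A)}h^{\ord(h)}$ has the length $|\supp(A)|$. It also has the lengths $1+t$ for $t\in\Len(\bar A)$, where $1\le t\le\cro(\bar A)=|\supp(A)|-\cro(A)<|\supp(A)|-1$. This gives a difference of lengths in $[1,|\supp(A)|-2]$, which $\min\Delta(G_1)=\gcd\Delta(G_1)$ must divide.
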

\begin{proof}
Items~1 and~2 follow from
\cite[Lemma~2.6.1 and 2]{Zhong}. 

3. Suppose such a subset
$E$ were non-half-factorial. By Lemma~\ref{l1}.3,
there exists an atom $U\in \mathcal A(E)$ with $\cro(U)>1$, whence $\supp(U)$ is non-half-factorial. It follows from Item~2 that  $\min\Delta(\supp(U))\le |\supp(U)|-2\le |E|-2\le (d+1)-2=d-1$. But
Lemma~\ref{l2}.1 implies that $\min\Delta(\supp(U))$ is a positive
multiple of $d$, a contradiction.
\end{proof}

\begin{lemma}\label{l5}
Let $G\cong C_{n_1}\oplus\cdots\oplus C_{n_r}$ with
$1<n_1\mid\cdots\mid n_r$, and suppose
$r\ge\lfloor n_r/2\rfloor+1$.
\begin{enumerate}
\item If $d\in\Delta_1(G)$ satisfies
$\lfloor n_r/2\rfloor\le d\le r-1$, and
$G_0\subset G$ is non-half-factorial with $d\mid\min\Delta(G_0)$,
then each $U\in\Ato(G_0)$ with $\cro(U)>1$ satisfies
\[
 \cro(U)=1+\frac{sd}{n_{r-d}}\qquad\text{for some }s\in\mathbb N.
\]
\item We have
\[
 \bcro(G,r-1)\ge
 1+(r-1)\sum_{p\mid n_1}(1-p^{-\mathsf v_p(n_1)}).
\]
\item
If $n_r$ is a prime power, then
$\bcro(G,r-1)=1+(r-1)(1-1/n_1)$.
\end{enumerate}
\end{lemma}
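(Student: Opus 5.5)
The plan treats the three items in order. Item~1 is a divisibility argument, Item~2 an explicit construction, and Item~3 combines Item~1 with a counting bound.

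\emph{Item 1.} Let $U\in\Ato(G_0)$ with $\cro(U)>1$. By Lemma~\ref{l1}.3, $E=\supp(U)$ is non-half-factorial. Since $\Delta(E)\subset\Delta(G_0)$, Lemma~\ref{l2}.1 gives that $d$ divides every element of $\Delta(E)$. The hypotheses give $d\ge\lfloor n_r/2\rfloor$, so Lemma~\ref{l4} applies: $E$ is an LCN-set, $\min\Delta(E)\le|E|-2$, and therefore $|E|\ge d+2$. Lemma~\ref{l4}.3 adds that every subset of $E$ with at most $d+1$ elements is half-factorial.

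The basic length comparison is the following. For $m=\operatorname{lcm}\{\ord(g):g\in E\}$, the zero-sum sequence $U^{m}$ has the factorization $\prod_g (g^{\ord(g)})^{m\mathsf v_g(U)/\ord(g)}$. This has $m\cro(U)$ atoms, while $U^{m}$ also has a factorization with $m$ atoms. In an LCN-set every factorization length lies in a single residue class modulo $\min\Delta(E)$, so $d\mid m(\cro(U)-1)$.

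To sharpen the modulus $m$ to $n_{r-d}$, I would work in the quotient $\bar G=G/G[n_{r-d}]$, where $G[n_{r-d}]$ is the $n_{r-d}$-torsion subgroup. This quotient has rank at most $d$. The goal is to show that the terms whose order does not divide $n_{r-d}$ contribute only integer multiples of $d/n_{r-d}$ beyond $1$. The route is to take a subset of at most $d+1$ elements of $E$ whose images span the image of $E$ in $\bar G$. That subset is half-factorial by Lemma~\ref{l4}.3, so every atom over it has cross number exactly $1$. Using this, one rewrites $U^{n_{r-d}}$ so that the relevant factorization lengths differ from $n_{r-d}$ by $n_{r-d}(\cro(U)-1)$. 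Divisibility by $d$ then yields $\cro(U)=1+sd/n_{r-d}$ with $s\in\mathbb N$, and $s\ge1$ because $\cro(U)>1$. I expect this reduction, namely controlling the large-order terms through the rank bound on $\bar G$, to be the main obstacle.

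\emph{Item 2.} Take independent $e_1,\dots,e_r$ of order $n_1$. For each prime $p\mid n_1$ put $q_p=p^{\mathsf v_p(n_1)}$ and $e_{i,p}=(n_1/q_p)e_i$, an element of order $q_p$. Then take one extra element $g$ built from $e_1,\dots,e_r$ so that
\[
U=g\prod_{i\le r-1}\prod_{p}e_{i,p}^{\,q_p-1}\cdot(\text{terms involving }e_r)
\]
is an atom with $\cro(U)=1+(r-1)\sum_p(1-1/q_p)$. Minimality of $U$ follows from independence: in each coordinate $i\le r-1$ the product $\prod_p e_{i,p}^{q_p-1}$ is zero-sum free by the CRT. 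To check that $r-1$ divides $\min\Delta(\supp U)$, I would imitate the computation behind Lemma~\ref{l2}.3. The $e_{i,p}$ together with the "diagonal" element behave like the set $\{e_1,\dots,e_s,e_1+\cdots+e_s\}$ with $s=r$, whose minimum distance is $r-1$. The precise choice of $g$ has to be fixed during this verification.

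\emph{Item 3.} When $n_r$ is a prime power, $G$ is a $p$-group and $n_{r-(r-1)}=n_1$. Item~1 with $d=r-1$ gives $\cro(U)=1+s(r-1)/n_1$, and Item~2 gives the lower bound corresponding to $s=n_1-1$. It therefore remains to show $s\le n_1-1$, that is, $\cro(U)<1+(r-1)$. Lemma~\ref{l1}.2 gives $\cro(U)\le\bcro(G)=1/n_r+\sum_i(1-1/n_i)$. Combining this with $r\ge\lfloor n_r/2\rfloor+1$ and the constraint that $s(r-1)/n_1$ lies in the lattice from Item~1 should exclude $s\ge n_1$. If the crude bound from Lemma~\ref{l1}.2 is not sharp enough, I would apply the rank argument of Item~1 again in $\bar G$ to bound the number of terms of order larger than $n_1$.
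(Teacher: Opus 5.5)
\textbf{Item 1 has a genuine gap, and Item 3 rests on it.} The paper does not reprove this lemma (it quotes Lemma~3.7 of Zhong's earlier paper), so your outline has to stand on its own.

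Your observation that $n_{r-d}G\cong G/G[n_{r-d}]$ has rank at most $d$ is the right input. But the step meant to exploit it (``take at most $d+1$ elements of $E$ spanning the image in $\bar G$, use their half-factoriality, rewrite $U^{n_{r-d}}$'') is never carried out, and as stated it cannot work:
\begin{enumerate}
\item Half-factoriality of a spanning set $F$ controls only zero-sum sequences over $F$. A term $g\in E\setminus F$ enters only through $n_{r-d}g\in\langle n_{r-d}F\rangle$, i.e.\ through zero-sum sequences over $F\cup\{g\}$. For $|F|=d+1$ this set has $d+2$ elements, and Lemma~\ref{l4}.3 says nothing about it; the set $\{e_1,\ldots,e_{d+1},e_1+\cdots+e_{d+1}\}$ of Lemma~\ref{l2}.3 is not half-factorial. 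So you need $|F|\le d$.
\item A rank bound gives a spanning subset of that size only in a $p$-group. For a basis $x_1,\ldots,x_d$ of $C_6^d$, the set $\{2x_i,3x_i\colon i\le d\}$ is an irredundant generating set, and $n_r$ need not be a prime power.
\item You also never show that $n_{r-d}\cro(U)$ is an integer.
\end{enumerate}

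\textbf{A repair.} Split $U$ into primary parts as in the proof of Proposition~\ref{p2}: $\prod_pA_p=UP$, where $A_p=\prod_g(g^{m_g})^{b_g}$ with $m_g=\ord(g)/\ord(\pi_p(g))$, and $P$ is a product of $C$ atoms $g^{\ord(g)}$. Fix $p$ and put $n=n_{r-d}$.
\begin{itemize}
\item The elements $nm_gg$ lie in the $p$-group $nG_p$, which has rank at most $d$. As in the proof of Lemma~\ref{l10}.1, some $F_p\subset E$ with $|F_p|\le d$ satisfies $\langle nm_ff\colon f\in F_p\rangle=\langle nm_gg\colon g\in E\rangle$.
\item Each $g\notin F_p$ then gives a zero-sum sequence $W_g=g^{nm_g}\prod_{f\in F_p}f^{x_{g,f}}$ over $F_p\cup\{g\}$. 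This set has at most $d+1$ elements, so it is half-factorial.
\item For large $K$ one has $A_p^n\prod_{f\in F_p}f^{K\ord(f)}=\prod_{g\notin F_p}W_g^{b_g}\cdot R$ with $R\in\mathcal B(F_p)$.
\item Comparing the two resulting lengths modulo $d$ gives $n\cro(A_p)\in\mathbb Z$ and $n\cro(A_p)\equiv n\ell_p\pmod d$ for $\ell_p\in\Len(A_p)$.
\item Summing over $p$, and using $\sum_p\ell_p\in\Len(UP)$ and $1+C\in\Len(UP)$, yields $d\mid n(\cro(U)-1)$.
\end{itemize}

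\textbf{Item 2} is a shape rather than a construction: $g$ is left open. The minimum-distance check is also not an instance of Lemma~\ref{l2}.3, since your set has elements of several orders. A choice that works is $g=e_1+\cdots+e_r$ and $G_0=\{\pi_p(e_i)\colon i\le r-1,\ p\mid n_1\}\cup\{e_r,g\}$, with $U=g\,e_r^{n_1-1}\prod_{i<r}\prod_{p\mid n_1}\pi_p(e_i)^{q_p-1}$.
\begin{itemize}
\item Every atom over $G_0$ other than the atoms $h^{\ord(h)}$ is determined by $k=\mathsf v_g\in[1,n_1-1]$. Its cross number is $1+(r-1)\sum_{p\mid n_1}((-k)\bmod q_p)/q_p$.
\item Hence every length $\ell$ of every $B\in\mathcal B(G_0)$ satisfies $\cro(B)-\ell\equiv-(r-1)\mathsf v_g(B)\sum_{p\mid n_1}q_p^{-1}\pmod{r-1}$.
\item So all distances are multiples of $r-1$.
\end{itemize}

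\textbf{Item 3} then follows from Items~1 and~2 as you intend, and the crude bound is already enough. For a $p$-group, Lemma~\ref{l1}.2 gives $\bcro(G)=r-\sum_{i<r}1/n_i<r$. Hence $s(r-1)/n_1<r-1$, i.e.\ $s\le n_1-1$.
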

\begin{proof}
See
\cite[Lemma~3.7]{Zhong}.
\end{proof}

\begin{lemma}\label{l6}
Let $G$ be a finite abelian group and let $d\in\Delta_1(G)$.
\begin{enumerate}
\item We have $\rho^*(G,d)\ge\bcro(G,d)$. If
$1\le d\le\rk(G)-1$, then
$\bcro(G,d)\ge1+d(1-1/n_1)\ge1+d/2$,
where $n_1$ is given by
$G\cong C_{n_1}\oplus\cdots\oplus C_{n_r}$ with
$1<n_1\mid\cdots\mid n_r$.
\item If $G_0\subset G$ is an LCN-set with
$\Delta(G_0)\ne\emptyset$ and $d\mid\min\Delta(G_0)$, then
\[
 \max\Len(B)\le\cro(B)\le\bcro(G,d)\min\Len(B)
 \qquad \text{ for every nontrivial }B\in\mathcal B(G_0).
\]
In particular, $\rho(G_0)\le\bcro(G,d)$.
\item If $d>\exp(G)-2$, every subset $G_0$ with $\Delta(G_0)\neq \emptyset$ and $d\mid \Delta(G_0)$  is an LCN-set, and
\[
 \rho^*(G,d)=\bcro(G,d).
\]
\item Removing $0$ from a subset changes neither its distances nor
its elasticity.
\end{enumerate}
\end{lemma}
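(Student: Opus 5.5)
We need to prove Lemma l6, whose four items are mostly routine consequences of the definitions and the earlier lemmas; Items~1--3 can largely be taken from \cite{Zhong}, but the plan below derives each directly.

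\textbf{Item~1.} For $\rho^*(G,d)\ge\bcro(G,d)$, take $G_0$ attaining $\bcro(G,d)$ and an atom $U\in\Ato(G_0)$ with $\cro(U)=\bcro(G,d)$. Put $G_1=\supp(U)\cup(-\supp(U))\subset G_0\cup(-G_0)$. The obstacle is that $G_1$ need not satisfy $d\mid\min\Delta(G_1)$. So instead I stay inside $G_0$ and use the standard estimate: for $B=U^{m}$ with $m=\operatorname{lcm}$ of the orders of the terms, $B$ factors as $\prod_{g}(g^{\ord(g)})^{m\mathsf v_g(U)/\ord(g)}$. This factorization has length $m\,\cro(U)$, while $\min\Len(B)\le m$. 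Hence $\rho(G_0)\ge\cro(U)$, provided $g^{\ord(g)}\in\Ato(G_0)$, which holds as $g\in G_0$. Taking the maximum gives $\rho^*(G,d)\ge\rho(G_0)\ge\bcro(G,d)$.

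For the lower bound $\bcro(G,d)\ge 1+d(1-1/n_1)$ when $d\le r-1$, use Lemma~\ref{l2}.3 with $s=d+1$. Take independent $e_1,\dots,e_{d+1}$ of order $n_1$ and $G_0=\{e_1,\dots,e_{d+1},e_0\}$, where $e_0=e_1+\cdots+e_{d+1}$. Then $\min\Delta(G_0)=d$, and the atom $(-e_0)$-free sequence $U=e_0^{\,n_1-1}\prod e_i$ does not lie over $G_0$. Instead use $U=e_0\prod_i e_i^{\,n_1-1}$, which is an atom over $G_0$ with cross number $\frac1{n_1}+(d+1)\frac{n_1-1}{n_1}=1+d(1-1/n_1)$. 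The final bound $\ge 1+d/2$ follows since $n_1\ge2$.

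\textbf{Item~2.} For $B=U_1\cdots U_k$ over an LCN-set, we have $\cro(U_i)\ge1$. Additivity of $\cro$ then gives $\cro(B)=\sum\cro(U_i)\ge k$ for every factorization, hence $\max\Len(B)\le\cro(B)$. Taking a factorization of length $\min\Len(B)$ instead gives $\cro(B)\le\min\Len(B)\max_i\cro(U_i)\le\bcro(G_0)\min\Len(B)\le\bcro(G,d)\min\Len(B)$, since $G_0$ is admissible in the definition. Dividing yields $\rho(G_0)\le\bcro(G,d)$.

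\textbf{Item~3.} If $d>\exp(G)-2$, I claim every atom $U$ over such $G_0$ has $\cro(U)\ge1$. Suppose $\cro(U)<1$. Lemma~\ref{l4}.1 applies, since $\min\Delta(G_0)\ge d\ge\exp(G)-1\ge\lfloor\exp(G)/2\rfloor$ (the case $\exp(G)\le2$ is trivial, as every atom then has cross number $\ge1$). It gives $|U|=\exp(G)-\min\Delta(G_0)\le 1$, impossible for a nonzero-term atom with cross number $<1$. Hence $G_0$ is LCN. Item~2 then gives $\rho^*(G,d)\le\bcro(G,d)$, and Item~1 gives the reverse inequality.

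\textbf{Item~4.} Removing $0$: $\mathcal B(G_0)=\{0^kB\}$, and $\Len(0^kB)=k+\Len(B)$. This leaves differences unchanged. For elasticity, $\rho$ of $k+L$ is at most $\rho(L)$, and the supremum is unaffected.

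The only delicate point I expect is in Item~1: checking that the explicit atom indeed lives over the set from Lemma~\ref{l2}.3 with the right minimum distance.
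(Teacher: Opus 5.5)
Your proposal is correct. Items~2 and~4 match the paper's arguments; Items~1 and~3 take different routes.

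\textbf{Item~1.} The paper simply cites \cite[Lemma~3.6]{Zhong}, whereas you derive the statement directly.
\begin{itemize}
\item You obtain $\rho(G_0)\ge\bcro(G_0)$ from the two factorizations of $U^m$, of lengths $m$ and $m\cro(U)$.
\item You obtain the lower bound from the explicit atom $e_0\prod_i e_i^{\,n_1-1}$ over the set $\{e_1,\ldots,e_{d+1},e_1+\cdots+e_{d+1}\}$ of Lemma~\ref{l2}.3. Independent elements of order $n_1$ exist because $d+1\le r$ and $n_1\mid n_i$. This atom has cross number $1+d(1-1/n_1)$.
\end{itemize}
This makes the item self-contained. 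Your aside that $e_0^{\,n_1-1}\prod_i e_i$ ``does not lie over $G_0$'' is wrong: it is an atom over $G_0$, just with the smaller cross number $1+d/n_1$. You never use it, so this is harmless.

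\textbf{Item~3.} The paper avoids any structural input. If $\cro(U)<1$, then $\{n,n\cro(U)\}\subset\Len(U^n)$ with $n=\exp(G)$. By Lemma~\ref{l2}.1, $d$ divides $n-n\cro(U)$, which lies in $[1,n-2]$. This is impossible when $d>n-2$.

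You instead invoke Lemma~\ref{l4}.1 to force $|U|\le 1$. You check its hypotheses correctly: $\min\Delta(G_0)\ge d\ge\exp(G)-1\ge\lfloor\exp(G)/2\rfloor$, and $G_0$ is non-half-factorial. The route is valid. Its cost is that in this range the conclusion of Lemma~\ref{l4}.1 is precisely the LCN property you want, so you are outsourcing the step to the cited result in \cite{Zhong}. It also forces you to treat $\exp(G)\le2$ separately. The paper's direct length comparison needs only Lemma~\ref{l2}.1 and covers all exponents uniformly.
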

\begin{proof}
Item~1 follows from \cite[Lemma~3.6]{Zhong}. 

2. Let $B\in \mathcal B(G_0)$. If $B$ is trivial, then the assertion is clear.  Suppose $B=U_1\cdot\ldots\cdot U_k=V_1\cdot\ldots\cdot V_{\ell}$ with $k=\min \mathsf L(B)\ge 1$ and $\ell=\max\mathsf L(B)$, where $U_1,\ldots, U_k,V_1,\ldots, V_{\ell}\in \Ato(G_0)$. Since $G_0$ is an LCN-set, we obtain $\cro(V_i)\ge 1$ and hence $\cro(B)\ge \ell$.
At the same time, we have $\mathsf k(B)=\sum_{i=1}^k \cro(U_i)\le k\bcro(G_0)\le k\bcro(G,d)$. 
It follows that $\ell/k=\max\mathsf L(B)/\min \mathsf L(B)\le \bcro(G,d)$ for all nontrivial $B\in \mathcal B(G_0)$, whence $\rho(G_0)\le\bcro(G,d)$.

3. Set $n=\exp(G)$. Assume to the contrary that there exists $U\in \Ato(G_0)$ with $\cro(U)<1$. 
Then $\{n, n\cro(U)\}\subset \mathsf L(U^n)$
implies that $d$ divides $n-n\cro(U)$. Since $|U|\ge 2$, we obtain $\cro(U)\ge 2/n$ and hence $1\le n-n\cro(U)\le n-2$, a contradiction to $d>n-2$. Thus $G_0$ is an LCN-set. Now the equality follows from Items 1 and 2.

4. Let $B=0^{v}B_0$ be a zero-sum sequence with $v\in \mathbb N_0$ and $0\nmid B_0$. Then $\mathsf L(B)=v+\mathsf L(B_0)$, whence $\Delta(\mathsf L(B))=\Delta(\mathsf L(B_0))$ and $\rho(\mathsf L(B))\le \rho(\mathsf L(B_0))$. Now the assertion follows.
\end{proof}

\begin{lemma}\label{l7}
Let $G\cong C_{n_1}\oplus\cdots\oplus C_{n_r}$ with
$1<n_1\mid\cdots\mid n_r$, and let $d\in\Delta_1(G)$ satisfy
$d\ge\max\{\lfloor n_r/2\rfloor,\lfloor(r+1)/2\rfloor\}$.
\begin{enumerate}
\item If $d\ge r$, then $\rho^*(G,d)=n_r/(n_r-d)$.
\item Suppose $r=n_r-1\ge3$. If $d=r-1$, then
$\rho^*(G,d)=\bcro(G,d)$ unless $n_1=n_r$ is a prime power.
In that exceptional case, we have
$\rho^*(G,r-1)=1+(r+1)(r-1)/(r+2)$.
\end{enumerate}
\end{lemma}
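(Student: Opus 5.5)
The plan is to treat both items by comparing cross numbers along the two extreme factorizations of a zero-sum sequence, as in the proof of Lemma~\ref{l6}.2. For $B\in\mathcal B(G_0)$ we have
\[
 \cro(B)=\sum_i\cro(U_i)=\sum_j\cro(V_j)
\]
for any two factorizations. Hence $\rho(G_0)$ is controlled by the largest cross number of an atom over $G_0$ divided by the smallest one. Write $n=n_r$. The hypothesis $d\ge\lfloor n/2\rfloor$ makes Lemma~\ref{l4} available. By Lemma~\ref{l4}.1, every atom $U$ with $\cro(U)<1$ is squarefree with all terms of order $n$ and $|U|=n-\min\Delta(G_0)$. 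Thus $\cro(U)=(n-\min\Delta(G_0))/n$. Since $d\mid\min\Delta(G_0)$ and $\min\Delta(G_0)\le\max\Delta^*(G)$, the possibilities for $\min\Delta(G_0)$ are very restricted, by Lemma~\ref{l2}.2.

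For Item~1 ($d\ge r$), Lemma~\ref{l2}.2 forces $d\le n-2$, and $2d>n-2$ forces $\min\Delta(G_0)=d$. For the lower bound I would take $g$ of order $n$ and a two-element set $G_0=\{g,\,a g\}$ with a suitable $a$. The aim is that the atoms are $g^n$, $(ag)^n$ and one atom of length $n-d$, giving minimum distance $d$ and elasticity $n/(n-d)$ via $U^n$ versus products of $g^n$ and $(ag)^n$.

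For the upper bound I need every atom over an admissible $G_0$ to have cross number at most $1$. Suppose an atom $U$ had $\cro(U)>1$. Its support is a non-half-factorial LCN-set by Lemma~\ref{l4}.2, with $\min\Delta(\supp U)\le|\supp U|-2$ and divisible by $d$. Lemma~\ref{l4}.3 then forces $|\supp U|\ge d+2\ge r+2$. I would then show, along the lines of Lemma~\ref{l5}.1, that such an LCN-set cannot have minimum distance a multiple of $d\ge r$, since $\mathsf m(G)\le\max\{r-1,\lfloor n/2\rfloor-1\}<d$ by Lemma~\ref{l2}.2. This is the cleaner route: $\supp U$ is a non-half-factorial LCN-set, so $\min\Delta(\supp U)\le\mathsf m(G)<d$, which contradicts divisibility. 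With every atom having cross number in $[(n-d)/n,1]$, we get $\rho(G_0)\le n/(n-d)$.

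For Item~2 ($r=n-1$, $d=r-1=n-2$), $\min\Delta(G_0)=n-2$ exactly. Atoms of cross number $<1$ have length $2$, so they are of the form $g(-g)$ with $\ord g=n$. Now $\mathsf m(G)$ may equal $r-1=d$, so LCN-supports with $\cro>1$ do occur, and Lemma~\ref{l5}.1 gives $\cro(U)=1+sd/n_{1}$ for them. The argument splits into two cases. If $G_0$ contains no pair $\{g,-g\}$, then $G_0$ is an LCN-set and $\rho(G_0)\le\bcro(G,d)$ by Lemma~\ref{l6}.2; combined with Lemma~\ref{l6}.1 this gives equality, provided the mixed case never exceeds it. The main obstacle is the mixed case, where $G_0$ contains both a pair $g,-g$ and a large-cross-number configuration. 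Here one must bound $\max\Len(B)/\min\Len(B)$ using both $\cro=2/n$ atoms and $\cro>1$ atoms. I would first try to show that the presence of $\pm g$ forces the remaining part of $G_0$ into a rigid shape of type $\{e_1,\dots,e_s,e_1+\cdots+e_s\}$ from Lemma~\ref{l2}.3. That shape exists only when $n_1=n_r$ is a prime power, and there an explicit optimization of a sequence built from $e_1,\dots,e_r,\pm(e_1+\cdots+e_r)$ yields the value $1+(r+1)(r-1)/(r+2)$. Otherwise the mixed elasticity should be dominated by $\bcro(G,d)$.
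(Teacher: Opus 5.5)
\paragraph{Item 1.} Your upper bound is sound. Every admissible $G_0$ has $\min\Delta(G_0)=d$, because $\min\Delta(G_0)\le n-2<2d$. An atom with $\cro(U)>1$ would have a non-half-factorial LCN support whose minimum distance is a multiple of $d$, yet at most $\mathsf m(G)\le\max\{r-1,\lfloor n/2\rfloor-1\}<d$. Atoms with $\cro(U)<1$ have $\cro(U)=(n-d)/n$ by Lemma~\ref{l4}.1.

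Your lower bound fails as proposed: no two-element set $\{g,ag\}$ works unless $d=n-2$. By Lemma~\ref{l4}.1, an atom of cross number less than one over a set with minimum distance $d\ge\lfloor n/2\rfloor$ is squarefree of length $n-d$. A squarefree sequence over two elements has length at most $2$. For example, $d=5$ in $C_8^3$ is realized by $\{e_1,e_2,-e_1-e_2\}$ but by no two-element set.

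The repair is to use the hypothesis $d\in\Delta_1(G)$. By Lemma~\ref{l2}.2, $d$ divides some $d^*\in\Delta^*(G)$ with $d^*\le n-2<2d$, so $d\in\Delta^*(G)$. A set realizing $d$ cannot be LCN, since $\mathsf m(G)<d$. So it contains an atom $U$ as in Lemma~\ref{l4}.1, and $\{n-d,n\}\subset\Len(U^n)$.

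\paragraph{Item 2.} Here there is a genuine gap. Only the LCN case is actually handled. The mixed case is just a plan, and it rests on a false premise.

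The configuration $G_0^*=\{e_1,\ldots,e_{n-1},g,-g\}$, with $(e_i)$ a basis of $C_n^{n-1}$ and $g=e_1+\cdots+e_{n-1}$, exists for every $n\ge4$, not only for prime powers. It has minimum distance $n-2$. Put $A=g\,e_1^{n-1}\cdot\ldots\cdot e_{n-1}^{n-1}$. Then $A^n(-g)^n=(g(-g))^n\prod_i(e_i^n)^{n-1}$ has factorizations of lengths $n+1$ and $n^2-n+1$. Hence $\rho(G_0^*)\ge1+n(n-2)/(n+1)$ for every $n$.

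The prime-power exception comes from comparing this number with $\bcro(G,n-2)$:
\begin{itemize}
\item when $n$ is a prime power, Lemma~\ref{l5}.3 gives $\bcro(G,n-2)=1+(n-1)(n-2)/n$, which is smaller;
\item otherwise, Lemma~\ref{l5}.2 gives $\bcro(G,n-2)\ge1+\tfrac76(n-2)$, which is larger.
\end{itemize}
Your proposal contains no such comparison.

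More seriously, it gives no upper bound in the mixed case. Cross-number counting alone yields only $\rho(G_0)\le\tfrac n2\,\bcro(G_0)$, which is far too weak. What is needed is a structural reduction of the kind carried out in Lemma~\ref{l13} and Proposition~\ref{p5}:
\begin{itemize}
\item Write $G=\langle g\rangle\oplus K$ and $J=\langle g\rangle\cap\langle G_0\setminus\{\pm g\}\rangle$.
\item If $J=0$, then $\mathcal B(G_0)$ splits as a product, and you induct on $|G_0|$.
\item If $J\ne0$, a rank argument in the primary components, together with the rigidity of Lemma~\ref{l13}, forces $G\cong C_n^{n-1}$ and $G_0\setminus\{0\}=G_0^*$.
\item The elasticity of $G_0^*$ must then be computed exactly, not just bounded below.
\item The cases $n\in\{4,5\}$ lie outside the range of Lemma~\ref{l13} and need separate treatment.
\end{itemize}

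The paper does not reprove this statement; it imports it from \cite[Proposition~3.8]{Zhong}.
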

\begin{proof}
See \cite[Proposition~3.8]{Zhong}.
\end{proof}

\begin{lemma}\label{l8}
Let $H=C_n^r$ with $n\ge2$ and $r\ge1$. Suppose that $\mathsf D(H)\ge4$ and that $G$ is a finite abelian group with
$\mathcal L(G)=\mathcal L(H)$.
\begin{enumerate}
\item If $r\le n-3$, then $G\cong H$.

\item If $r\ge n-1$ and $n$ is a prime
power, then $G\cong H$.
\item Suppose $r\ge n-1$. If
\[
 \bcro(H,r-1)=1+\frac{s(r-1)}n
 \quad\text{for some }s\in\mathbb N\text{ with }\gcd(s,n)=1,
\]
then $G\cong H$.
\end{enumerate}
\end{lemma}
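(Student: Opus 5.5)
Items~1 and~2 are exactly \cite[Theorem~1.2]{Zhong}, and I would simply cite them. For Item~3 I would follow the scheme of \cite[Proposition~5.2]{Zhong}: use Lemma~\ref{l3} to transfer invariants from $H$ to $G$. The aim is to show that $G$ contains a subgroup isomorphic to $H$, or conversely, and then to conclude by the last sentence of Lemma~\ref{l3}.

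\textbf{Step 1: fix the common distance parameter.} Write $G\cong C_{m_1}\oplus\cdots\oplus C_{m_t}$ with $1<m_1\mid\cdots\mid m_t$. By Lemma~\ref{l3} we have $\mathsf D(G)=\mathsf D(H)$, $\Delta_1(G)=\Delta_1(H)$, and
\[
 \max\{m_t-2,t-1\}=\max\{n-2,r-1\}=r-1,
\]
where the last equality uses $r\ge n-1$. Put $d=r-1$. Then $d\in\Delta_1(H)$ by Lemma~\ref{l2}.3, so $d\in\Delta_1(G)$, and $\rho^*(G,d)=\rho^*(H,d)$.

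\textbf{Step 2: identify $\rho^*$ with $\bcro$ on both sides.} For $H$, when $r\ge n$ we have $d>n-2$, so Lemma~\ref{l6}.3 gives $\rho^*(H,d)=\bcro(H,d)$. When $r=n-1$, Lemma~\ref{l7}.2 gives the same equality. The prime-power exceptional case is already covered by Item~2, and the small cases with $r\le2$ I would check by hand.

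For $G$ I would split according to whether $t-1=d$ or $m_t-2=d>t-1$.
\begin{itemize}
\item If $m_t-2=d$ and $t\le d$, then Lemma~\ref{l7}.1 gives $\rho^*(G,d)=m_t/(m_t-d)=(d+2)/2$. This is strictly smaller than $\bcro(H,d)\ge 1+d(1-1/n)$, which we get from Lemma~\ref{l6}.1, as soon as $n\ge3$. That is a contradiction, so this case cannot occur.
\item Hence $t=r$ and $m_t\le r+1$, so Lemma~\ref{l6}.3 or Lemma~\ref{l7}.2 again yields $\rho^*(G,d)=\bcro(G,d)$.
\end{itemize}

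\textbf{Step 3: the arithmetic comparison, which is the main obstacle.} From Step~2 we have
\[
 \bcro(G,d)=\bcro(H,d)=1+\frac{sd}{n}\quad\text{with }\gcd(s,n)=1.
\]
On the other side, Lemma~\ref{l5}.1 applied to $G$ with $d=r-1$ gives $\bcro(G,d)=1+s'd/m_1$ for some $s'\in\mathbb N$. Equating the two, $s/n=s'/m_1$, and coprimality of $s$ and $n$ forces $n\mid m_1$. Thus every cyclic factor of $G$ has order divisible by $n$, and since both groups have rank $r$, $H=C_n^r$ embeds into $G$. Lemma~\ref{l3} then gives $G\cong H$.

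The delicate points are checking that the hypotheses of Lemma~\ref{l5}.1 hold for $G$ (namely $r\ge\lfloor m_t/2\rfloor+1$ and $\lfloor m_t/2\rfloor\le d$) and ruling out the degenerate rank and exponent configurations in Step~2. I expect that bookkeeping, rather than the divisibility argument itself, to be where care is needed.
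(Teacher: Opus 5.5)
Your overall scheme matches the comparison the paper itself runs in Section~\ref{s6} for $r=n-2$; the paper proves this lemma only by citing \cite{Zhong}. The scheme is: transfer $\rho^*(\cdot,r-1)$ via Lemma~\ref{l3}, identify it with $\bcro(\cdot,r-1)$ on both sides, then use Lemma~\ref{l5}.1 and $\gcd(s,n)=1$ to force $n\mid m_1$. Steps~1 and~3 are sound.

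However, Step~2 contains a false claim on the $G$ side. When $t=r$ and $m_t=r+1$, Lemma~\ref{l7}.2 gives $\rho^*(G,r-1)=\bcro(G,r-1)$ \emph{unless} $m_1=m_t$ is a prime power, that is, unless $G\cong C_{r+1}^{r}$ with $r+1$ a prime power. In that case Proposition~\ref{p4} records $\rho^*(G,r-1)=1+\frac{(r+1)(r-1)}{r+2}>\bcro(G,r-1)$. You flagged this exception for $H$ but not for $G$. In this configuration Step~3 cannot be run, because you no longer know that $\rho^*(G,r-1)$ has the form $1+s'(r-1)/m_1$.

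The repair is short. In the exceptional case, equate the explicit value with $\rho^*(H,r-1)=\bcro(H,r-1)=1+\frac{s(r-1)}{n}$ to get $\frac{s}{n}=\frac{r+1}{r+2}$. Both fractions are in lowest terms, since $\gcd(s,n)=1$ and $\gcd(r+1,r+2)=1$. Hence $n=r+2$, which contradicts $r\ge n-1$.

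Two smaller points tidy up the rest.
\begin{enumerate}
\item Use Item~2 at the outset to assume $n$ is not a prime power. This gives $n\ge6$ and $r\ge5$, so Lemma~\ref{l7}.2 applies on both sides and there are no small cases left to check by hand.
\item In Step~3, say explicitly that $\bcro(G,r-1)\ge 1+(r-1)(1-1/m_1)>1$ by Lemma~\ref{l6}.1. The maximum defining $\bcro(G,r-1)$ is therefore attained by an atom of cross number greater than one, so Lemma~\ref{l5}.1 really does determine its form.
\end{enumerate}
With these additions the argument is complete.
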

\begin{proof}
See \cite[Theorem~1.2 and Proposition~5.2]{Zhong}.
\end{proof}

\section{Cross numbers with respect to minimal distances}\label{s3}
In this section we prove Theorem~\ref{t1}. The main task is to obtain
the upper bound for $\bcro(G,r-1)$. We first determine how concatenating 
atoms of the form $g^{\ord(g)}$ affects the maximum of factorization
lengths. 
\begin{lemma}
\label{l9}
Let $G$ be a finite abelian group with exponent $n\ge3$, and let
$G_0\subset G$ be an LCN-set with $\min\Delta(G_0)\ge n-3$.
For every $A\in\Ato(G_0)$ and every family
$(c_g)_{g\in\supp(A)}$ of nonnegative integers, we have
\[
 \max\Len\left(A\prod_{g\in\supp(A)}\left(g^{\ord(g)}\right)^{c_g}\right)
 =1+\sum_{g\in\supp(A)}c_g.
\]
\end{lemma}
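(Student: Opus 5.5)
The lower bound is immediate: $A\prod_{g}(g^{\ord(g)})^{c_g}$ is itself a factorization of length $1+N$, where $N=\sum_{g\in\supp(A)}c_g$. So the content is the upper bound $\max\Len(B)\le1+N$ for $B=A\prod_g(g^{\ord(g)})^{c_g}$. I would prove it by induction on $N$; the case $N=0$ is trivial since $\Len(A)=\{1\}$. I would also dispose first of the degenerate case $A=g^{\ord(g)}$, where $B=g^{\ord(g)(1+c_g)}$ has the single factorization length $1+c_g$.

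For the inductive step, fix a factorization $B=V_1\cdot\ldots\cdot V_m$ with $m=\max\Len(B)$. Suppose some $V_i$ has the form $g^{\ord(g)}$. Since $A$ is an atom different from $g^{\ord(g)}$, we have $\mathsf v_g(A)<\ord(g)$, so $\mathsf v_g(B)\ge\ord(g)$ forces $c_g\ge1$. Then $B(g^{\ord(g)})^{-1}$ is the same kind of product with $c_g$ lowered by one. By induction its maximal length is $N$, so $m-1\le N$, as required. It therefore suffices to treat maximal factorizations in which no $V_i$ is of the form $g^{\ord(g)}$.

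In that remaining case I argue by contradiction, assuming $m>1+N$. Since $1+N$ and $m$ both lie in $\Len(B)$, Lemma~\ref{l2}.1 shows that $m-(1+N)$ is a positive multiple of $\min\Delta(G_0)\ge n-3$, so $m\ge N+n-2$. On the other hand, $G_0$ is an LCN-set, so Lemma~\ref{l6}.2 gives $m\le\cro(B)=\cro(A)+N$. Together these force $\cro(A)\ge n-2$, so the contradiction must come from controlling the atoms $V_i$.

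Each $V_i$ contains fewer than $\ord(g)$ copies of every $g$, since it is an atom other than $g^{\ord(g)}$. Each $V_i$ also has $\cro(V_i)\ge1$. The idea is that the many copies of $g$ coming from the blocks $g^{\ord(g)}$ are spread over at least $c_g$ different atoms $V_i$. I would exploit this by comparing $V_1\cdot\ldots\cdot V_m$ with a second factorization obtained by regrouping inside $\prod_i V_i$. Concretely, take two atoms $V_i,V_j$ sharing a term $g$ and refactor $V_iV_j$ using $g^{\ord(g)}$ where possible. By Lemma~\ref{l4}, using $\min\Delta(G_0)\ge n-3\ge\lfloor n/2\rfloor$ for $n\ge 5$ (small $n$ checked directly), every atom with cross number greater than $1$ has support that is an LCN-set with small minimum distance. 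This should produce a length $\ell\in\Len(B)$ with $0<|m-\ell|<n-3$. That contradicts Lemma~\ref{l2}.1, because every element of $\Len(B)$ is congruent to $m$ modulo $\min\Delta(G_0)$.

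I expect this last step to be the main obstacle: producing, from a maximal factorization avoiding the blocks $g^{\ord(g)}$, an explicit factorization whose length differs from $m$ by a positive amount less than $n-3$. The first thing I would try is the local two-atom refactorization $V_iV_j$ above, combined with Lemma~\ref{l4}.3 to force half-factoriality of small supports. If that fails, I would try a cross-number count showing directly that $\cro(A)<n-2$ under the hypotheses.
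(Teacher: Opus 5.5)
Your opening matches the paper's own first steps: the lower bound, induction on $N=\sum c_g$ (the paper takes a counterexample with $N$ minimal), and discarding any $V_i$ of the form $g^{\ord(g)}$. The bound $m\ge N+\min\Delta(G_0)+1$ for a longer maximal factorization is also correct. But the decisive step is missing, and neither route you sketch can supply it. First, the inequality $\cro(A)\ge n-2$ is not contradictory, so trying to prove $\cro(A)<n-2$ is hopeless. The lemma has to cover the atoms used in Proposition~\ref{p2} and Theorem~\ref{t1}. These are atoms over LCN-sets of minimum distance $r-1\ge n-3$, with cross number about $1+(r-1)\sum_{p\mid n_1}(1-p^{-\mathsf v_p(n_1)})$, and that exceeds $n-2$ for large $r$. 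Second, the two-atom refactorization of $V_iV_j$ is too local: two atoms need not jointly contain $g^{\ord(g)}$, and nothing controls the lengths of what remains. Third, for $n\in\{3,4\}$ the hypothesis $\min\Delta(G_0)\ge n-3$ is vacuous. So ``checking small $n$ directly'' would mean proving the lemma for arbitrary LCN-sets in groups of exponent $3$ or $4$ and arbitrary rank, which is not a finite check.

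The missing idea is to apply the induction hypothesis a second time, to sub-products of the long factorization. Put $d=\min\Delta(G_0)$ and fix $g$ with $c_g>0$. Take any $I\subset[1,m]$ with $g^{\ord(g)}\mid\prod_{i\in I}V_i$, and set $R=(g^{\ord(g)})^{-1}\prod_{i\in I}V_i$; it is nontrivial because no $V_i$ is of the form $g^{\ord(g)}$.
\begin{itemize}
\item Then $B(g^{\ord(g)})^{-1}=R\prod_{j\notin I}V_j$ has a factorization of length at least $1+m-|I|$. By induction its maximal length is $N$, so $|I|\ge m-N+1\ge d+2>\lceil n/2\rceil$.
\item Define the excess $E_g=\sum_i\max\{\mathsf v_g(V_i)-1,0\}$. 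If the atoms with $\mathsf v_g(V_i)\ge2$ already contained $\ord(g)$ copies of $g$, at most $\lceil\ord(g)/2\rceil$ of them would suffice, contradicting $|I|\ge d+2$.
\item Otherwise, complete them with atoms having $\mathsf v_g(V_i)=1$ to collect exactly $\ord(g)$ copies. This gives $|I|=\ord(g)-E_g$, hence $E_g\le n-d-2\le1$.
\item Every $V_i$ differs from $A$. Otherwise $\prod_g(g^{\ord(g)})^{c_g}$ would have a factorization of length $m-1>N$, exceeding its cross number and violating the LCN property.
\item Since $\supp(V_i)\subset\supp(A)$ and $V_i\nmid A$, some $g$ satisfies $\mathsf v_g(V_i)>\mathsf v_g(A)\ge1$. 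Such a $g$ has $c_g>0$, so each $V_i$ contributes excess at least one.
\item Summing over $i$ and over the $g$ with $c_g>0$ gives $m\le|\{g:c_g>0\}|\le N$, contradicting $m\ge N+d+1$.
\end{itemize}
This counting argument is what the paper does, and it handles all $n\ge3$ uniformly. Without it or a genuine substitute, your proposal does not reach the upper bound.
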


\begin{proof}
Let $B=\prod_{g\in\supp(A)}(g^{\ord(g)})^{c_g}$.
Set $d=\min\Delta(G_0)$ and $C=\sum_{g\in \supp(A)} c_g$. Then $C+1\in \mathsf L(AB)$. If $\cro(A)=1$, the
LCN property gives $\max\Len(AB)\le\cro(AB)=C+1$.

Suppose $\cro(A)>1$ and choose a counterexample with $C$ minimal.
Then $C\ge1$, $\mathsf v_g(A)<\ord(g)$ for every $g\in\supp(A)$, and
there is a factorization $AB=A_1\cdot\ldots\cdot A_{\ell}$ with
\begin{equation}\label{e3}
 \ell\ge C+d+1.
\end{equation}

If there exists $i\in [1, \ell]$, say $i=1$, such that $A_1=(g^*)^{\ord(g^*)}$ for some $g^*\in \supp(A)$, then $\ord(g^*)\le \mathsf v_{g^*}(AB)=\mathsf v_{g^*}(A)+c_{g^*}\ord(g^*)$ and hence $c_{g^*}\ge 1$. Letting $$B^*=\big((g^*)^{\ord(g^*)}\big)^{-1}B=\big((g^*)^{\ord(g^*)}\big)^{c_{g^*}-1}\prod_{g\in\supp(A)\setminus\{g^*\}}\left(g^{\ord(g)}\right)^{c_g}\,,$$
it follows that 
\[
AB^*=A_2\cdot\ldots\cdot A_{\ell}\,,
\]
a contradiction to the minimality of $C$. Thus 
\begin{equation}\label{eq-supp}
	\text{$|\supp(A_i)|\ge 2$ for all $i\in [1,\ell]$.}
\end{equation}

If there exists $i\in [1, \ell]$, say $i=1$, such that $A_1=A$, then $B$ would have a factorization of length $\ell-1>C=\cro(B)$, contrary to the LCN property. Thus $A_i\neq A$ for all $i\in [1,\ell]$.

Choose $g^*$ with $c_{g^*}>0$, and let $I\subset[1,\ell]$ satisfy
$(g^*)^{\ord(g^*)}\mid\prod_{i\in I}A_i$.  Then $R:=\left((g^*)^{\ord(g^*)}\right)^{-1}\prod_{i\in I}A_i$
is nontrivial by \eqref{eq-supp}. Since $AB\left((g^*)^{\ord(g^*)}\right)^{-1}=R\prod_{j\in [1,\ell]\setminus I}A_j$, it follows from the minimality of $C$ that 
$\max\mathsf L(R)+\ell-|I|\le 1+C-1=C$. Combined with \eqref{e3}, we obtain that 
\begin{equation}\label{e4}
 |I|\ge \max \mathsf L(R)+\ell-C\ge 1+d+1=d+2\,.
\end{equation}
Since $d\ge\max\{1,n-3\}$ and $n\ge3$, we have
$d+2>\lceil n/2\rceil$.

Let $J_{g^*}=\{i\in [1,\ell]:\mathsf v_{g^*}(A_i)\ge2\}$ and
$E_{g^*}=\sum_{i\in J_{g^*}}(\mathsf v_{g^*}(A_i)-1)$.
If $\sum_{i\in J_{g^*}}\mathsf v_{g^*}(A_i)\ge\ord(g^*)$, then we may choose $I\subset J_{g^*}$ with $|I|\le \lceil\ord(g^*)/2\rceil$,
contradicting \eqref{e4}.
Otherwise, since $\mathsf v_{g^*}(AB)>\ord(g^*)$, there exists $I\subset [1,\ell]$ with $J_{g^*}\subset I$ and $|I|=\ord(g^*)-E_{g^*}$ such that $\sum_{i\in I}\mathsf v_{g^*}(A_i)=\ord(g^*)$.
It follows from   $d\ge n-3$ and \eqref{e4} that 
\[
 E_{g^*}=\ord(g^*)-|I|\le\ord(g^*)-d-2\le n-d-2\le1.
\]
Set 
\[
\delta_{g, i}=
\begin{cases}
	\mathsf v_g(A_i)-1, \quad \quad &\text{if }\mathsf v_g(A_i)\ge 2,\\
     0,                  &\text{otherwise}.
\end{cases}
\]
Then $E_{g^*}=\sum_{i=1}^{\ell}\delta_{g^*,i}$ and we have shown that for every $g\in \supp(A)$ with $c_g>0$, the following holds.
\[
\sum_{i=1}^{\ell}\delta_{g,i}\le 1\,.\]

At the same time, for every $i\in [1,\ell]$, there exists some $g_i\in \supp(A)$ such that $\mathsf v_{g_i}(A_i)>\mathsf v_{g_i}(A)\ge 1$, whence $c_{g_i}>0$ and $\mathsf v_{g_i}(A_i)\ge 2$. It follows that 
\[
\sum_{g\in \supp(A) \text{ with }c_g>0}\delta_{g,i}\ge 1\,.
\]
Therefore
\[
 \ell=\sum_{i=1}^{\ell}1\le \sum_{i=1}^{\ell}\sum_{g\in \supp(A) \text{ with }c_g>0}\delta_{g,i}=\sum_{g\in \supp(A) \text{ with }c_g>0}\sum_{i=1}^{\ell}\delta_{g,i}\le|\{g\in \supp(A)\colon c_g>0\}|\le C,
\]
contrary to \eqref{e3}.
\end{proof}


\begin{proposition}\label{p1}
Let $G\cong C_{n_1}\oplus\cdots\oplus C_{n_r}$ be a finite abelian
group with $1<n_1\mid\cdots\mid n_r$, $n_r\ge6$, and
$r\ge n_r-2$. Let $G_0\subset G$ be an LCN-set with
$\min\Delta(G_0)=r-1$, and let $p\mid n_r$ be prime.
Suppose $B\in\mathcal B(G_0)$ has the form
\[
 B=\prod_{\substack{g\in G_0\\p\mid\ord(g)}}
 g^{b_g\ord(g)/p^{\mathsf v_p(\ord(g))}},\qquad \text{ where }\quad b_g\in\mathbb N_0,
\]
and suppose $\beta$ is a real number with $0\le \beta<1$ such that 
\[
\beta\equiv (\cro(B)-\ell)/(r-1) \pmod {\mathbb Z}
\]  for some
$\ell\in\Len(B)$. Then 
\begin{equation*}
 p^{\mathsf v_p(n_1)}\beta\in\mathbb Z \quad  \text{ and }\quad \max\Len(B)=\cro(B)-(r-1)\beta.
\end{equation*}
\end{proposition}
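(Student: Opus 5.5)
We reduce $B$ to a canonical factorization built from the "$p$-parts" and then compare every length with it. For each $g\in G_0$ with $p\mid\ord(g)$ write $q_g=p^{\mathsf v_p(\ord(g))}$ and $m_g=\ord(g)/q_g$. Thus $h_g:=m_g g$ has order $q_g$, a power of $p$, and every term occurring in $B$ comes in blocks $g^{m_g}$. Consider the sequence $B'=\prod_g h_g^{b_g}$ over the $p$-group $G_p$ (more precisely over $\{h_g\}\subset G$). Factorizations of $B'$ lift: a zero-sum subsequence $\prod h_g^{a_g}$ of $B'$ yields the zero-sum subsequence $\prod g^{a_g m_g}$ of $B$. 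Conversely, each atom $A\mid B$ has $\cro(A)\ge1$ by the LCN property.

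The first step is the congruence. Since $\min\Delta(G_0)=r-1$ and Lemma~\ref{l2}.1 gives $\min\Delta=\gcd\Delta$, all elements of $\Len(B)$ are congruent modulo $r-1$. Hence $\beta$ does not depend on the choice of $\ell$, and we may take $\ell=\max\Len(B)$.

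For the integrality of $p^{\mathsf v_p(n_1)}\beta$, I would compare with the factorization of $B$ into the atoms $g^{\ord(g)}$ where possible. More concretely, choose a factorization and note $\cro(B)=\sum_g b_g m_g/\ord(g)=\sum_g b_g/q_g$, so $p^{\mathsf v_p(n_r)}\cro(B)\in\mathbb Z$. To lower the denominator to $p^{\mathsf v_p(n_1)}$, I would use Lemma~\ref{l5}.1 applied with $d=r-1$, which is admissible since $r-1\ge\lfloor n_r/2\rfloor$ (this uses $r\ge n_r-2$ and $n_r\ge6$). It says every atom $U$ over $G_0$ has $\cro(U)-1\in (r-1)n_{1}^{-1}\mathbb Z$ (as $n_{r-d}=n_1$). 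Summing over a factorization of length $\ell$ gives $(\cro(B)-\ell)/(r-1)\in n_1^{-1}\mathbb Z$. Combined with the $p$-power denominator of $\cro(B)$, this yields a denominator dividing $\gcd(n_1,p^{\mathsf v_p(n_r)})=p^{\mathsf v_p(n_1)}$.

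For the maximum, write $\max\Len(B)=\cro(B)-(r-1)(\beta+t)$ with $t\in\mathbb Z$. Here $t\ge0$ because $\max\Len(B)\le\cro(B)$ by Lemma~\ref{l6}.2. We must show $t=0$, that is, that some factorization has $\cro(B)-\ell<r-1$. This is the main obstacle. I would argue by induction on $|B|$, pulling off atoms of the form $g^{\ord(g)}$ and using Lemma~\ref{l9} to control how such atoms affect $\max\Len$. Take a factorization $B=A_1\cdots A_\ell$ of maximal length. If some $A_i$ has $\cro(A_i)>1$, Lemma~\ref{l5}.1 gives $\cro(A_i)\ge1+(r-1)/n_1$. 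The goal is then to show that the total excess $\sum(\cro(A_i)-1)$ is less than $r-1$. Otherwise one could replace a sub-product carrying excess at least $r-1$ by a longer factorization, contradicting maximality. Such a sub-product can be rewritten inside the "$p$-block" structure using the atoms $g^{\ord(g)}$, with lengths computed via Lemma~\ref{l9}. I expect the key difficulty to be showing that excess accumulating to $r-1$ across several atoms forces such a rearrangement. The plan is to exploit the block form of $B$ (multiplicities divisible by $m_g$), projecting to the $p$-group via $g\mapsto h_g$, where the cross-number formula of Lemma~\ref{l1}.2 bounds the excess of any atom over the $p$-part. The bound $\cro(B)-\ell\le (r-1)(1-p^{-\mathsf v_p(n_1)})<r-1$ should then follow from the rank-$r$ $p$-group estimate together with Lemma~\ref{l4}.
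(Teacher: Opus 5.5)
Your first step (the independence of $\beta$ from $\ell$) is fine, and so is the inequality $\max\Len(B)\le\cro(B)-(r-1)\beta$. The integrality argument, however, is wrong as written. The quantity whose denominator matters is $(\cro(B)-\ell)/(r-1)$, and dividing by $r-1$ destroys the $p$-power denominator of $\cro(B)$. Your two facts only give a denominator dividing $\gcd\bigl(n_1,p^{\mathsf v_p(n_r)}(r-1)\bigr)$. For $G=C_6^4$ and $p=2$ (so $r-1=3$ and $n_1=6$), both facts merely say $\beta\in\frac16\mathbb Z$, not $2\beta\in\mathbb Z$. The missing observation has two parts. First, the residue $D\mapsto(\cro(D)-\ell_D)/(r-1)+\mathbb Z$ is well defined and additive on $\mathcal B(G_0)$. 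Second, $B^{q}$ with $q=p^{\mathsf v_p(n_r)}$ is a product of the atoms $g^{\ord(g)}$, so it has residue $0$. Hence $q\beta\in\mathbb Z$, and together with $n_1\beta\in\mathbb Z$ this gives $p^{\mathsf v_p(n_1)}\beta\in\mathbb Z$, which is the paper's argument.

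The main assertion, a factorization of length exactly $\cro(B)-(r-1)\beta$, is not proved, and the tools you name do not deliver it. Lemma~\ref{l9} is an \emph{upper} bound on $\max\Len$ (the ingredient for Proposition~\ref{p2}), whereas you need a long factorization. Also, the atoms of an arbitrary longest factorization of $B$ need not have multiplicities divisible by $m_g$, so their sub-products cannot be rewritten in block form.

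Projecting to $G_p$ via $g\mapsto m_gg$ and lifting an atomic factorization gives $B=D_1\cdots D_x$ with $\cro(D_i)\le\bcro(G_p)\le1+(r-1)(1-p^{-\mathsf v_p(n_r)})<r$. So each $\Len(D_i)=\{\cro(D_i)-(r-1)\beta_i\}$ with $\beta_i\in[0,1)$. But $\sum_i\beta_i$ is only congruent to $\beta$ modulo $\mathbb Z$ and may exceed $1$. You are left with length $\cro(B)-(r-1)(\beta+t)$ with uncontrolled $t\ge0$.

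The missing idea is to enlarge the $p$-group so that the residue becomes part of the sum. The paper forms $F_p=M_p/\!\sim$ from the blocks $g^{m_g}$, identifying modulo zero-sum block sequences of residue $0$. Via the residue, the kernel of $F_p\to\pi_p\langle G_0\rangle$ embeds into $\frac{1}{p^{\mathsf v_p(n_1)}}\mathbb Z/\mathbb Z$; this is exactly where the corrected integrality argument is needed. So $\rk(F_p)\le r+1$ and $\exp(F_p)\mid p^{\mathsf v_p(n_r)}$. Lemma~\ref{l1}.2 then gives $\bcro(F_p)\le1+r(1-p^{-\mathsf v_p(n_r)})<r$ when $n_r$ is not a prime power, since then $p^{\mathsf v_p(n_r)}\le n_r/2<r$. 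The prime-power case combines $\bcro(F_p)<r+1$ with $\bcro(G)<r$.

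Consequently every atom of $F_p$ lifts to a block sequence $D$ with $\Len(D)=\{\cro(D)\}$. Now factor $\widetilde B(-\gamma)$, with $\gamma=[B]$, into $F_p$-atoms plus one atom $\widetilde T(-\gamma)$. This concentrates all the excess in the lift $T$, where $0\le\cro(T)-\ell_T<r-1$ forces $\cro(T)-\ell_T=(r-1)\beta$. Neither the rank-$r$ estimate for $G_p$ nor Lemma~\ref{l4} supplies this regrouping.
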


\begin{proof}
By Lemma~\ref{l2}.1, all factorization lengths of a given zero-sum sequence are congruent
modulo $r-1$. Then for every $D\in \mathcal B(G_0)$, 
\begin{equation}\label{ee3}
\text{the residue of $(\cro(D)-\ell)/(r-1)$ modulo
	$\mathbb Z$ is independent of $\ell\in\Len(D)$},	
\end{equation}
whence $\beta$ is uniquely determined. Since $G_0$ is an LCN-set, we have $\cro(B)\ge \max\mathsf L(B)$, whence
\[
\frac{\cro(B)-\max\mathsf L(B)}{r-1}\ge \beta\,.
\]
 It follows that 
\begin{equation}\label{ee11}
	\max\Len(B)\le \cro(B)-(r-1)\beta\,.
\end{equation}

Since $r\ge n_r-2$ and $n_r\ge6$, we have
$r\ge\lfloor n_r/2\rfloor+1$ and $r-1\ge\lfloor n_r/2\rfloor$.
Note that
$r-1\in\Delta^*(G)\subset\Delta_1(G)$ by Lemma~\ref{l2}.3.
We may apply Lemma~\ref{l5}.1. Since $G_0$ is an LCN-set,
every atom $U$ over $G_0$ satisfies
$\cro(U)=1+(r-1)s_U/n_1$ for some $s_U\in\mathbb N_0$,
where $s_U=0$ if $\cro(U)=1$.
Let $D\in \mathcal B(G_0)$ and 
suppose $D=U_1\cdot\ldots\cdot U_{\ell}$, where $U_1,\ldots, U_{\ell}\in \mathcal A(G_0)$. Then 
\begin{equation}\label{ee2}
 \frac{\cro(D)-\ell}{r-1}=\frac{1}{n_1}\sum_{i=1}^{\ell}s_{U_i}
 \in\frac{1}{n_1}\mathbb Z.
\end{equation}

We now construct an auxiliary finite abelian $p$-group. Set 
\[
G_1:=\{g\in G_0\colon \ord(\pi_p(g))>1\},\qquad \lambda_g:=\ord(\pi_p(g)),\qquad \text{ and }\quad  m_g=\ord(g)/\lambda_g\,.
\]
Let $M_p\subset\mathcal F(G_0)$ be the submonoid generated by
 $\{g^{m_g}\colon g\in G_1\}$, and set $\mathcal B_p=M_p\cap\mathcal B(G_0)$.
For $U,V\in M_p$, write $U\sim V$ if there exist $C,D\in\mathcal B_p$ with
$\cro(C)-\ell_C,\cro(D)-\ell_D\in (r-1)\mathbb Z$ such that
$UC=VD$, where
$\ell_C\in\Len(C)$ and $\ell_D\in\Len(D)$. We write
$F_p:=M_p/\!\sim=\{[U]\colon U\in M_p\}$ additively, so that $[U_1]+[U_2]=[U_1U_2]\in F_p$ for all $U_1,U_2\in M_p$. Note that
\begin{equation}\label{ee5}
	\text{ $[U]=0\in F_p$ if and only if  $U\in \mathcal B_p$ with $\cro(U)-\ell_U\in (r-1)\mathbb Z$ for some $\ell_U\in \mathsf L(U)$.}
\end{equation}
Since $(g^{m_g})^{\lambda_g}=g^{\ord(g)}$ is an atom of cross
number one, each $[g^{m_g}]\in F_p$
has order dividing $\lambda_g$.
Thus $F_p$ is a finite abelian $p$-group of exponent dividing $p^{\mathsf v_p(n_r)}$.

The sum map $\sigma$ on $M_p$ induces a surjection
$\overline\sigma:F_p\to\pi_p\langle G_0\rangle$ with
$\overline\sigma([U])=\sigma(U)$. 
Since $m_gg\in G_p$, $\ord(m_gg)=\lambda_g$, and $\gcd(\lambda_g, m_g)=1$  for all $g\in G_0$, we obtain that 
$\pi_p\langle G_0\rangle=\langle m_gg\colon g\in G_1\rangle$. Note that $\ker(\overline\sigma)=\{[U]\in F_p\colon U\in \mathcal B_p\}$. By \eqref{ee3}, the map
\[
 \psi\colon\ker(\overline\sigma)\longrightarrow\mathbb Q/\mathbb Z,
 \qquad \text{ defined by } [D]\longmapsto\frac{\cro(D)-\ell}{r-1}+\mathbb Z,
 \quad \text{ where } \ell\in\Len(D),
\]
is a well-defined homomorphism, since
equivalent sequences have the same residue. It follows from \eqref{ee5} that  $[U]=0\in F_p$ if and only if $\psi([U])=0+\mathbb Z$, whence $\psi$ is injective.

Suppose $D\in \mathcal B_p$. Then  $D=\prod_{g\in G_1}(g^{m_g})^{b_g}$ for some $b_g\in \mathbb N_0$. We have
$$D^{p^{\mathsf v_p(n_r)}}=\prod_{g\in G_1}(g^{\ord(g)})^{b_g p^{\mathsf v_p(n_r)}/\lambda_g}\,,$$ 
whence $\cro(D^{p^{\mathsf v_p(n_r)}})\in \mathsf L(D^{p^{\mathsf v_p(n_r)}})$ and hence ${p^{\mathsf v_p(n_r)}}\psi([D])=\psi([D^{p^{\mathsf v_p(n_r)}}])=0+\mathbb Z$. It follows that 
 $\psi([D])$ has order dividing $p^{\mathsf v_p(n_r)}$.
As $\gcd(n_1,p^{\mathsf v_p(n_r)})=p^{\mathsf v_p(n_1)}$, we infer by \eqref{ee2} that
\begin{equation}\label{e6}
 \psi\colon\ker(\overline\sigma)\hookrightarrow\frac{1}{p^{\mathsf v_p(n_1)}}\mathbb Z/\mathbb Z,
\end{equation}
whence $\ker(\overline\sigma)$ is cyclic of order dividing $p^{\mathsf v_p(n_1)}$. Combined with
$\rk(\pi_p\langle G_0\rangle)\le \mathsf r(G)=r$, we have 
\[
\text{$\rk(F_p)\le r+1$\quad  and\quad  $\exp(F_p)\le p^{\mathsf v_p(n_r)}$.}
\]
If $n_r$ is not a prime power, then $p^{\mathsf v_p(n_r)}\le n_r/2<r$ (in view of $r\ge n_r-2$ and $n_r\ge 6$), whence Lemma \ref{l1}.2 implies 
\begin{equation}\label{e7}
 \bcro(F_p)\le\bcro(C_{p^{\mathsf v_p(n_r)}}^{r+1})=1+r(1-p^{-\mathsf v_p(n_r)})<r.
\end{equation}
If $n_r$ is a prime power, then $p^{\mathsf v_p(n_r)}=n_r$ and instead
\begin{equation}\label{ee1}
 \bcro(F_p)\le1+r(1-1/n_r)<r+1,\qquad
 \bcro(G)\le1+(r-1)(1-1/n_r)<r.
\end{equation}

We next lift sequences over $\{\widetilde{g}\colon g\in G_1\}\subset F_p$ back to sequences over $G_1\subset G_0$.
For $g\in G_1$, set
$\widetilde g:=[g^{m_g}]\in F_p$.
The element $\widetilde g$ has order $\lambda_g$, since 
$\lambda_g\widetilde g=[g^{m_g\lambda_g}]=[g^{\ord(g)}]=0\in {F_p}$ and  $\ord(\overline{\sigma}(\widetilde g))=\ord(m_g g)=\lambda_g$. From a sequence $\widetilde D$ over $\{\widetilde{g}\colon g\in G_1\}$, 
replace each occurrence of $\widetilde g$ by $g^{m_g}$ to obtain
a sequence $D\in M_p$,
retaining  the index $g$ of each occurrence when distinct $g$ give
the same element of $F_p$. Let $\sigma_p$ be the sum map from sequences over $F_p$ to $F_p$. Note that $\overline{\sigma}$ is a map from $F_p$ to $\pi_p\langle G_0\rangle$.
Then
\begin{equation}\label{e8}
 [D]=\sigma_p(\widetilde D),\qquad
 \sigma(D)=\overline\sigma(\sigma_p(\widetilde D)),\qquad \text{ and }\ 
 \cro(D)=\cro(\widetilde D).
\end{equation}
In particular, 
\begin{equation}\label{ee9}
\begin{aligned}
	&\text{$D$ is zero-sum}, &&\text{when
		$\sigma_p(\widetilde D)\in\ker(\overline\sigma)$;}\\
		\text{and} \quad &\text{$(\cro(D)-\ell)/(r-1)$ is an integer for
			$\ell\in\Len(D)$}, &&\text{when $\sigma_p(\widetilde D)=0\in F_p$.}
\end{aligned}
\end{equation}

We next compare the cross number of a lifted sequence with its
factorization lengths. Let $D\in \mathcal B_p$ be nontrivial
such that $\widetilde{D}$ is either an atom over $\{\widetilde{g}\colon g\in G_1\}\subset F_p$ or a nontrivial zero-sum free sequence  over $\{\widetilde{g}\colon g\in G_1\}\subset F_p$. Let $t\in \mathsf L(D)$. Since $G_0$ is an LCN-set, we obtain $t\le \mathsf k(D)$. If $n_r$ is not a prime power, then \eqref{e7} and \eqref{e8} imply that
$\mathsf k(D)=\mathsf k(\widetilde{D})\le \bcro(F_p)<r\le (r-1)+t$.  If $n_r$ is a prime power and $D$ is an atom, then \eqref{ee1}  implies that $\cro(D)\le \bcro(G)<r=(r-1)+t$. If $n_r$ is a prime power and $D$ is not an atom, then \eqref{ee1} and \eqref{e8} imply that $\cro(D)=\mathsf k(\widetilde{D})\le \bcro(F_p)<r+1\le (r-1)+t$.
Therefore
\begin{equation}\label{e10}
 0\le\cro(D)-t<(r-1)\qquad \text{ for every }\ t\in\Len(D).
\end{equation}
In particular, if $\sigma_p(\widetilde D)=0\in F_p$, then $(\cro(D)-t)/(r-1)$ is an integer for one
$t\in\Len(D)$, whence it is an integer for all such $t$. Therefore
\eqref{e10} forces $t=\cro(D)$ throughout, which means that 
\begin{equation}\label{ee7}
 \text{$\widetilde{D}$ is an atom }\quad  \text{ implies }\quad \mathsf L(D)=\{\cro(D)\}.
\end{equation}

Note that $B=\prod_{g\in G_1}
g^{b_gm_g}$ is zero-sum.
Let $\widetilde B=\prod_{g\in G_1}\widetilde g^{b_g}$. Then
$\gamma:=\sigma_p(\widetilde B)=[B]\in\ker(\overline\sigma)$.
We have $\psi(\gamma)=\beta+\mathbb Z$. Since $|\ker(\overline\sigma)|$ divides $p^{\mathsf v_p(n_1)}$ by \eqref{e6}, we obtain 
\begin{equation}\label{e9}
	p^{\mathsf v_p(n_1)}\beta\in\mathbb Z,\qquad \text{ and hence }\ 0\le\beta\le1-p^{-\mathsf v_p(n_1)}.
\end{equation}
By \eqref{ee11}, it remains to show
\[
\cro(B)-(r-1)\beta\in \mathsf L(B).
\]

If $\gamma=0\in F_p$, then $\beta=0$. Factor $\widetilde B$ into atoms $\widetilde{D}$ over $F_p$.
For each such atom $\widetilde{D}$, let $D$ denote its lift. We infer by \eqref{ee7} that $\mathsf L(D)=\{\cro(D)\}$ and hence $\cro(B)\in\mathsf L(B)$. It follows that $\cro(B)-(r-1)\beta\in \mathsf L(B)$.

Suppose $\gamma\ne0\in F_p$. Assume  $\widetilde B(-\gamma)=\widetilde{D_1}\cdot\ldots\cdot \widetilde{D_x}\cdot \widetilde{T}(-\gamma)$, where  $\widetilde{D_1},\ldots, \widetilde{D_x}, \widetilde{T}(-\gamma)$ are atoms over $F_p$ and $\widetilde{T}$ is zero-sum free over $F_p$.
Let $D_i$ be the lift of $\widetilde{D_i}$ and $T$ be the lift of $\widetilde{T}$. Then $B=D_1\cdot\ldots\cdot D_x\cdot T$ and \eqref{ee7} implies that $\mathsf k(D_1\cdot\ldots\cdot D_x)\in \mathsf L(T^{-1}B)$. By \eqref{ee9}, we have $\beta\equiv (\cro(T)-\ell_T)/(r-1) \pmod {\mathbb Z}$ for some $\ell_T\in \mathsf L(T)$, and  
by \eqref{e10}, we have
$0\le (\cro(T)-\ell_T)/(r-1)<1$, whence $\beta=(\cro(T)-\ell_T)/(r-1)$
and hence  $\cro(B)-(r-1)\beta=\mathsf k(T^{-1}B)+\ell_T\in \mathsf L(B)$.

\end{proof}

We now combine Proposition \ref{p1} and 
Lemma~\ref{l9} to obtain an upper bound for the cross number of an atom.

\begin{proposition}\label{p2}
Let $G\cong C_{n_1}\oplus\cdots\oplus C_{n_r}$, where
$1<n_1\mid\cdots\mid n_r$ and $r\ge n_r-2\ge 4$.
Let $G_0\subset G$ be an LCN-set with
$\min\Delta(G_0)=r-1$. Then every
$A\in\Ato(G_0)$ with $\cro(A)>1$ satisfies
\[
 \cro(A)\le1+(r-1)\sum_{p\mid n_1}(1-p^{-\mathsf v_p(n_1)}).
\]
\end{proposition}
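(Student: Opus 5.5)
The plan is to pad $A$ with atoms of the form $g^{\ord(g)}$ so that the resulting zero-sum sequence splits into one piece per prime. Lemma~\ref{l9} then computes the exact maximal length of the padded sequence, and Proposition~\ref{p1} bounds it from below prime by prime.

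First the hypotheses. Since $r\ge n_r-2\ge4$, we have $n_r\ge6$ and $\min\Delta(G_0)=r-1\ge n_r-3$. So Lemma~\ref{l9} (with $n=n_r\ge3$) and Proposition~\ref{p1} both apply to $G_0$.

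Next, the decomposition. For $g\in\supp(A)$ and each prime $p\mid\ord(g)$, put $m_{g,p}=\ord(g)/p^{\mathsf v_p(\ord(g))}$. The numbers $m_{g,p}$, as $p$ runs over the primes dividing $\ord(g)$, have greatest common divisor one. Hence we can choose nonnegative integers $a_{g,p}$ with
\[
 \sum_{p\mid\ord(g)}a_{g,p}m_{g,p}=\mathsf v_g(A)+c_g\ord(g)
\]
for a suitable $c_g\in\mathbb N_0$. This is possible because every sufficiently large integer congruent to $\mathsf v_g(A)$ modulo $\ord(g)$ is such a nonnegative combination. For every prime $p\mid n_r$, set $B_p=\prod_{g}g^{a_{g,p}m_{g,p}}$, so that
\[
 A\prod_{g\in\supp(A)}\big(g^{\ord(g)}\big)^{c_g}=\prod_{p\mid n_r}B_p .
\]
Each term $m_{g,p}g$ lies in $G_p$, so $\sigma(B_p)\in G_p$. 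The sums $\sigma(B_p)$ add up to $0$ and lie in distinct primary components, so each $\sigma(B_p)=0$. Thus every $B_p$ lies in $\mathcal B(G_0)$ and has exactly the shape required in Proposition~\ref{p1}.

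Now compare lengths. Put $C=\sum_g c_g$. Lemma~\ref{l9} gives $\max\Len(A\prod(g^{\ord(g)})^{c_g})=1+C$. On the other hand, concatenating factorizations of the $B_p$ shows this maximum is at least $\sum_p\max\Len(B_p)$. By Proposition~\ref{p1}, $\max\Len(B_p)=\cro(B_p)-(r-1)\beta_p$ with $p^{\mathsf v_p(n_1)}\beta_p\in\mathbb Z$ and $0\le\beta_p<1$. Therefore $\beta_p\le1-p^{-\mathsf v_p(n_1)}$, and $\beta_p=0$ when $p\nmid n_1$. Summing over $p$ and using $\sum_p\cro(B_p)=\cro(A)+C$ yields
\[
 1+C\ge\cro(A)+C-(r-1)\sum_{p\mid n_1}\big(1-p^{-\mathsf v_p(n_1)}\big),
\]
which is the claimed inequality.

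The main obstacle is making sure the splitting is legitimate. The key point is that each $B_p$ is genuinely zero-sum, which rests on the primary-component argument above. The prime-by-prime bound itself is delivered by Proposition~\ref{p1}, and the exact upper value $1+C$ is delivered by Lemma~\ref{l9}, both of which are already available.
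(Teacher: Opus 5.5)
Your proposal is correct and is essentially the paper's proof: you pad $A$ by $\prod_{g}(g^{\ord(g)})^{c_g}$ so that the product splits into one piece per prime, each of the shape required in Proposition~\ref{p1}, and then compare the lower bound $\sum_p\max\Len(B_p)$ with the exact value $1+C$ supplied by Lemma~\ref{l9}. The differences are cosmetic. The paper picks the coefficients explicitly as residues $b_{j,g}\in[0,\lambda_{j,g}-1]$, gets $\kappa_g\ge0$ from the Chinese remainder theorem, and shows $\sigma(A_j)=\pi_j(\sigma(A))=0$ directly; you instead use a Frobenius-type existence argument for the coefficients and the direct-sum decomposition of $G$ to conclude $\sigma(B_p)=0$.
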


\begin{proof}
Let $p_1,\ldots,p_u$ be the distinct prime divisors of $n_r$ and 
let $\pi_j$ denote the projection onto the $p_j$-primary component of $G$.
For $g\in G_0$, set
\begin{equation}\label{e-p-2}
\text{$\lambda_{j,g}=\ord(\pi_j(g))$
\quad 	and\quad  $m_{j,g}=\ord(g)/\lambda_{j,g}$.}
\end{equation} Since $\lambda_{j,g}$ is a power of $p_j$ and $p_j\nmid m_{j,g}$, we have $\gcd(\lambda_{j,g}, m_{j,g})=1$ for all $g\in G_0$ and $j\in [1,u]$. Furthermore, we have
\begin{equation}\label{e-p-1}
\lambda_{k,g}\mid m_{j,g} \text{ for any two distinct }j,k\in [1,u]\,.
\end{equation}

Let $A\in\Ato(G_0)$ with $\cro(A)>1$. Then
$0\notin\supp(A)$.  For $\lambda_{j,g}>1$,
 there is
a unique $b_{j,g}\in [0, \lambda_{j,g}-1]$ 
 such that
\begin{equation}\label{e-p-3}
	b_{j,g}m_{j,g}\equiv \mathsf v_g(A)\pmod{\lambda_{j,g}}\,.
\end{equation}
For $\lambda_{j,g}=1$, set $b_{j,g}=0$, and let
$$A_j=\prod_{g\in G_0}g^{b_{j,g}m_{j,g}}.$$
It follows from \eqref{e-p-3} and \eqref{e-p-2} that
$b_{j,g}m_{j,g}g=\pi_j(\mathsf v_g(A) g)$, and hence
\[
 \sigma(A_j)=\pi_j(\sigma(A))=0.
\]
Thus $A_j\in\mathcal B(G_0)$ has the form required in
Proposition~\ref{p1}.
For each $g$, we infer from \eqref{e-p-1} that 
$b_{k,g}m_{k,g}\equiv0\pmod{\lambda_{j,g}}$ for distinct $k,j\in [1,u]$, whence \eqref{e-p-3} implies 
\[
\text{$\sum_{k=1}^u b_{k,g}m_{k,g}\equiv \mathsf v_g(A) \pmod{\lambda_{j,g}}$\quad  for every $j\in [1, u]$.}
\] 
Since $\prod_{j=1}^u\lambda_{j,g}=\ord(g)$, the Chinese remainder theorem yields
\[
 \sum_{k=1}^u b_{k,g}m_{k,g}=\mathsf v_g(A)+\kappa_g\ord(g),\qquad \text{ for some }\kappa_g\in\mathbb N_0.
\]
 Thus
\begin{equation}\label{e11}
 \prod_{j=1}^u A_j=AP,\qquad \text{ with }
 P:=\prod_{g\in\supp(A)}(g^{\ord(g)})^{\kappa_g}\qquad \text{ and }\quad 
 C:=\sum_{g\in\supp(A)}\kappa_g.
\end{equation}
For each $j$, let $\beta_j$ be real with $0\le \beta_j<1$ such that $\beta_j$ is congruent to
$(\cro(A_j)-\ell_j)/(r-1)$ modulo $\mathbb Z$, where
$\ell_j\in\Len(A_j)$.
Proposition~\ref{p1} gives $$\beta_j\le 1-p_j^{-\mathsf v_{p_j}(n_1)}$$
 and  a factorization of
$AP$ with length
\[
 \ell^*:=\sum_{j=1}^u\max\Len(A_j)
       =\sum_{j=1}^u\cro(A_j)-(r-1)\sum_{j=1}^u\beta_j
       =\cro(A)+C-(r-1)\sum_{j=1}^u\beta_j.
\]
Since $r-1\ge n_r-3$, Lemma~\ref{l9} implies 
$\ell^*\le C+1$. Therefore
\begin{equation*}
 \cro(A)\le1+(r-1)\sum_j\beta_j
              \le1+(r-1)\sum_{j=1}^u\left(1-p_j^{-\mathsf v_{p_j}(n_1)}\right)
              =1+(r-1)\sum_{p\mid n_1}\left(1-p^{-\mathsf v_p(n_1)}\right).
\end{equation*}
\end{proof}

We are now ready to prove Theorem~\ref{t1} by combining Proposition~\ref{p2}
with the lower bound in Lemma~\ref{l5}.2.

\begin{proof}[Proof of Theorem~\ref{t1}] Note that $r\ge 2$.
	If $(n_r,r)=(4,2)$, then $G$ is either $C_2\oplus C_4$
	or $C_4^2$. Since $\bcro(G,1)=\bcro(G)$, the assertion
	follows from Lemma~\ref{l1}.2.
	If $n_r\le5$ and $(n_r,r)\neq (4,2)$, then $n_r$ is a prime power with $r\ge \lfloor n_r/2\rfloor+1$, whence
	the assertion follows from Lemma~\ref{l5}.3.

	Suppose $n_r\ge6$. Since $r\ge n_r-2\ge\lfloor n_r/2\rfloor+1$,
	Lemma~\ref{l5}.2 gives
	\[
	\bcro(G,r-1)\ge 1+(r-1)\sum_{p\mid n_1}
	\left(1-\frac1{p^{\mathsf v_p(n_1)}}\right).
	\]
	Let $G_0\subset G$  with $\Delta(G_0)\neq \emptyset$ be such that $r-1$ divides $\min\Delta(G_0)$ and let $A\in \mathcal A(G_0)$ with $\cro(A)=\bcro(G_0)$.
	It suffices to show $$\mathsf k(A)\le 1+(r-1)\sum_{p\mid n_1}
	\left(1-\frac1{p^{\mathsf v_p(n_1)}}\right).$$ 
	We may assume that $\mathsf k(A)>1$. In view of $r\ge n_r-2\ge\lfloor n_r/2\rfloor+1$, it follows from Lemma~\ref{l4}.2 that $\supp(A)$ is an LCN-set. By Lemma \ref{l2}.1 and 2, we have $\min\Delta(G_0)$ divides $\min\Delta(\supp(A))$ and $\min\Delta(\supp(A))\le \max\{r-1,n_r-2\}<2(r-1)$,  whence $\min\Delta(\supp(A))=r-1$. In view of $r\ge n_r-2\ge 4$, the assertion now follows from 
	Proposition~\ref{p2}.
\end{proof}

\section{Subsets with minimum distance \texorpdfstring{$\exp(G)-3$}{exp(G)-3}}\label{s4}
In this section, we  consider subsets with
minimum distance $\exp(G)-3$. Every atom
of cross number less than one over such a subset has three distinct
terms. The following lemmas describe the subgroup they generate and
its intersection with the subgroup generated by the remaining support.

\begin{lemma}\label{l10}
Let $G=H\oplus K$ be a finite abelian group and let $G_0\subset G$ be a subset.
\begin{enumerate}
	\item If $G$ is a $p$-group and $H\cap \langle G_0\rangle \neq 0$, then there exists a subset
	$E\subset G_0$ with $|E|\le\rk(K)+1$ such that
	$H\cap\langle E\rangle\ne0$.

	\item If $G^*$ is a subgroup of $G$ such that
	$G^*\cong C_{\exp(G)}^s$ for some $s\in \mathbb N$, then $G^*$
	is a direct summand.
\end{enumerate}

\end{lemma}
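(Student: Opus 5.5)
For Item~1, I work in the socle. Since $G$ is a $p$-group and $H\cap\langle G_0\rangle\ne0$, this intersection contains an element $h\ne0$ of order $p$. Write $h=\sum_{g\in G_0}a_g g$ with integers $a_g$. Let $\varphi\colon G\to K$ be the projection along $H$, and consider the homomorphism $\mathbb Z^{G_0}\to G$, $(x_g)\mapsto\sum x_g g$. The task is to find a relation supported on at most $\rk(K)+1$ generators whose value lies in $H\setminus\{0\}$. I will argue by induction on $|G_0|$ (it suffices to take $G_0$ finite and minimal with $H\cap\langle G_0\rangle\ne0$) and show that a minimal such $G_0$ has $|G_0|\le\rk(K)+1$.

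So suppose $G_0$ is minimal and $|G_0|\ge\rk(K)+2$. For each $g\in G_0$, minimality gives $H\cap\langle G_0\setminus\{g\}\rangle=0$, so $\varphi$ is injective on $L_g:=\langle G_0\setminus\{g\}\rangle$. Hence $L_g$ embeds in $K$ and $\rk(L_g)\le\rk(K)$. A $p$-group of rank at most $\rk(K)$ generated by $|G_0|-1\ge\rk(K)+1$ elements has the following property: modulo $pL_g$ these generators are linearly dependent over $\mathbb F_p$. I would then exploit this dependence to replace one generator $g'$ by a combination of the others plus an element of $pL_g$. Iterating with a Frattini-type argument, using that $L_g/pL_g$ has dimension at most $\rk(K)$, shows that $g'\in\langle G_0\setminus\{g,g'\}\rangle$. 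Then $\langle G_0\setminus\{g'\}\rangle=\langle G_0\rangle$ and $H\cap\langle G_0\setminus\{g'\}\rangle\ne0$, contradicting minimality. The main obstacle is this reduction. The Burnside basis theorem says that some subset of size $\rk(L_g)$ already generates $L_g$, so some element $g'\ne g$ of $G_0$ is redundant in $L_g$, giving $g'\in\langle G_0\setminus\{g,g'\}\rangle$. With that, $\langle G_0\setminus\{g'\}\rangle=\langle G_0\rangle$, as required. This uses $|G_0\setminus\{g\}|\ge\rk(K)+1>\rk(L_g)$ and the fact that every generating set of a finite $p$-group contains a minimal generating subset of size equal to its rank.

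For Item~2, let $n=\exp(G)$ and $G^*\cong C_n^s$. I use the standard fact that a subgroup isomorphic to $C_n^s$ is pure: for every $m$, $mG\cap G^*=mG^*$. Indeed, if $y=mx\in G^*$, decompose $G^*$ into its primary parts and check the $p$-heights. Every element of order $p^k$ in $G^*$ with $p^k\mid n$ has height exactly $\mathsf v_p(n)-k$ in $G^*$. In $G$ its height can be no larger, since $p^{\mathsf v_p(n)}G_p=0$. Hence heights agree and $G^*$ is pure. Alternatively, and more directly, $G^*$ is an injective $\mathbb Z/n\mathbb Z$-module, being a free $\mathbb Z/n\mathbb Z$-module over a self-injective ring. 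Since $G$ is a $\mathbb Z/n\mathbb Z$-module, the inclusion $G^*\hookrightarrow G$ splits. I will present this injectivity argument via Baer's criterion for $\mathbb Z/n\mathbb Z$, or simply cite that $\mathbb Z/n\mathbb Z$ is quasi-Frobenius. This gives a complement $K'$ with $G=G^*\oplus K'$.
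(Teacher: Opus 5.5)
Your argument is correct, but both items take a different route from the paper.

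For Item~1, the paper argues constructively. It projects $G_0$ to $K$ and chooses $E^*\subset G_0$ with $|E^*|\le\rk(K)$ whose image generates $\varphi_K(\langle G_0\rangle)$, so that $G_0\subset H+\langle E^*\rangle$. Then either $E^*$ already meets $H$ nontrivially, or adjoining one $g\in G_0\setminus\langle E^*\rangle$ puts the nonzero $H$-component of $g$ into $\langle E^*\cup\{g\}\rangle$.

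You instead pass to an inclusion-minimal $G_0$. You note that the projection is injective on $\langle G_0\setminus\{g\}\rangle$, because its kernel $H$ meets this subgroup trivially, so this subgroup has rank at most $\rk(K)$. The Burnside basis theorem then produces a redundant generator $g'$, contradicting minimality.

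Both proofs hinge on the same fact: a generating set of a finite abelian $p$-group contains a generating subset of size equal to the rank. This is exactly where the $p$-group hypothesis is needed. The two statements are equally strong, since applying the existence statement to an inclusion-minimal set returns that set. The paper's version exhibits $E$ explicitly, while yours replaces the case distinction by a contradiction argument.

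Your opening material is never used and should be dropped: the socle element $h$, the map $\mathbb Z^{G_0}\to G$, the announced induction, and the iteration modulo $pL_g$. The final minimality argument stands on its own.

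For Item~2, the paper cites Szele's splitting criterion on each primary component. You instead observe that $G$ is a $\mathbb Z/n\mathbb Z$-module because $nG=0$. Since $G^*\cong(\mathbb Z/n\mathbb Z)^s$ is injective over the self-injective ring $\mathbb Z/n\mathbb Z$, the inclusion splits. This treats all primes at once and is self-contained, since Baer's criterion for $\mathbb Z/n\mathbb Z$ is a two-line check.

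Your alternative purity sketch would additionally need the theorem that bounded pure subgroups are direct summands, which is essentially what Szele's criterion packages. So present the injectivity argument.
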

\begin{proof}
1. Let $\varphi_K\colon G\to K$ be the projection, and set
$M=\langle\varphi_K(G_0)\rangle$. A generating set of a finite
abelian $p$-group contains a generating subset of size at most its
rank. Thus there exists $E^*\subset G_0$ with
$|E^*|\le\rk(M)\le\rk(K)$ such that 
$\langle\varphi_K(E^*)\rangle=M$. In particular,
$G_0\subset H+\langle E^*\rangle$.
If $H\cap\langle E^*\rangle\ne0$, take $E=E^*$.
Otherwise $H+\langle E^*\rangle=H\oplus\langle E^*\rangle$.
Since $H\cap\langle G_0\rangle\ne0$, some
$g\in G_0$ lies outside $\langle E^*\rangle$. Write
$g=h+u$ with $h\in H$ and $u\in\langle E^*\rangle$.
Then 
\[
0\ne h\in H\cap\langle E^*\cup\{g\}\rangle.
\]
Thus $E=E^*\cup\{g\}$ has the required size.

2. The assertion follows by applying Szele's splitting criterion
\cite[Chapter~5, Lemma~2.1]{Fuchs15} to each primary component of $G$.
\end{proof}

\begin{lemma}\label{l11}
Let $G$ be a finite abelian group of exponent $n\ge6$, and let
$G_0\subset G$ be a subset with $\min\Delta(G_0)=n-3$. Suppose that $U\in\Ato(G_0)$ satisfies $\cro(U)<1$.
\begin{enumerate}
\item We have $U=g_1 g_2 g_3$, where $g_1,g_2,g_3$ are distinct elements of order $n$ and $g_3=-g_1-g_2$.

\item We have $H:=\langle g_1,g_2,g_3\rangle\cong C_n^2$, $\Ato(\{g_1,g_2,g_3\})=\{g_1^n,g_2^n,g_3^n,U\}$, and $G_0\cap H\subseteq\{0,g_1,g_2,g_3\}$.

\item For $h\in G_0\setminus \{g_1,g_2,g_3\}$, we have $\langle h\rangle\cap H=0$.

	\item Let $U_1, U_2\in \Ato(G_0)$ be two distinct atoms with $\cro(U_1), \cro(U_2)<1$. Then $\supp(U_1)\cap \supp(U_2)=\emptyset$.
\end{enumerate}

\end{lemma}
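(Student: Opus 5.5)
The plan is to get Item~1 directly from Lemma~\ref{l4}.1, and then to prove Items~2--4 by repeatedly using one principle. Any subset $E\subset G_0$ with $\Delta(E)\ne\emptyset$ has $\Delta(E)\subset\Delta(G_0)$. By Lemma~\ref{l2}.1, $n-3=\gcd\Delta(G_0)$ divides every element of $\Delta(E)$, hence divides $\min\Delta(E)$. It therefore suffices, for a contradiction, to exhibit a zero-sum sequence over such an $E$ with two factorization lengths differing by a positive integer not divisible by $n-3$. Since $n\ge6$, every integer in $[1,n-4]$ qualifies, and so does $n-2$.

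For Item~1: since $n\ge6$, we have $n-3\ge\lfloor n/2\rfloor$. Lemma~\ref{l4}.1 then says $U$ is squarefree, all its terms have order $n$, and $|U|=n-(n-3)=3$. Hence $U=g_1g_2g_3$ with distinct $g_i$ of order $n$ and $g_3=-g_1-g_2$.

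For Item~2, I first determine $\Ato(\{g_1,g_2,g_3\})$. Let $V$ be an atom over this set. If $\cro(V)<1$, Lemma~\ref{l4}.1 forces $V=U$. If $\cro(V)\ge1$ and $\supp(V)=\{g_1,g_2,g_3\}$, then $U\mid V$, contradicting minimality. So $|\supp(V)|\le2$. Such a support is an LCN-set by Lemma~\ref{l4}.2 with at most $2\le (n-3)+1$ elements, hence half-factorial by Lemma~\ref{l4}.3. In particular $\cro(V)=1$.

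Now suppose $V=g_1^ag_2^b$ with $a,b\ge1$ and $a+b=n$, or the analogous pair with $g_3$. Substituting $g_3=-g_1-g_2$ produces a second relation. For instance, $U^aV'$ with $V'$ built from $g_2,g_3$ gives an explicit zero-sum sequence $B$ over $\{g_1,g_2,g_3\}$. Comparing its factorization into $g_i^n$'s, $U$'s and $V$ yields lengths differing by a small amount, e.g.\ $\{3,n\}$ together with an intermediate length. This would contradict the divisibility by $n-3$.

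Finding such an explicit $B$ with a forbidden length difference, uniformly in $a$, is the step I expect to be the main obstacle. I would first try sequences of the form $U^{k}g_i^{n}\cdots$ and $V^{j}g_3^{\ast}$ and exploit $\cro$-counting: lengths coincide with cross number for factorizations into cross-number-one atoms, and each occurrence of $U$ lowers the length by $1-3/n$ relative to the cross number. The aim is to engineer a length difference equal to $2$ or $n-2$.

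Once only $\{g_1^n,g_2^n,g_3^n,U\}$ remain as atoms, $\langle g_1\rangle\cap\langle g_2\rangle=0$, since a nontrivial relation would give an atom over $\{g_1,g_2\}$ other than $g_i^n$. Hence $H\cong C_n^2$. For $h\in G_0\cap H\setminus\{0,g_1,g_2,g_3\}$, write $h=ag_1+bg_2$ and apply the same length-gap argument to the four-element set $\{g_1,g_2,g_3,h\}$. This set is still small relative to $n-3$, so its atoms with support of size at most $3$ are controlled by Lemma~\ref{l4}.3.

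For Item~3, suppose $\langle h\rangle\cap H\ne0$. Let $k$ be minimal with $0\ne kh=ag_1+bg_2$ and $a,b\in[0,n-1]$. Then $W=h^kg_1^{n-a}g_2^{n-b}$ (suitably reduced) contains an atom involving $h$. Multiplying by powers of $U$ and $g_i^n$ and rewriting $g_1^{n-a}g_2^{n-b}$ via $U$ and $g_3$ gives two factorizations of lengths differing by a quantity in $[1,n-4]$. This again contradicts Lemma~\ref{l2}.1.

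For Item~4, let $U_1=g_1g_2g_3$ and $U_2=g_1'g_2'g_3'$ share a term, say $g_1'=g_1\in H$. If $g_2'\notin\{g_1,g_2,g_3\}$, then Item~3 gives $\langle g_2'\rangle\cap H=0$. But then $g_3'=-g_1-g_2'\notin H$, while $g_2'+g_3'=-g_1\in H\setminus\{0\}$. I would turn this into a contradiction using Item~3 applied to $H_2=\langle g_1',g_2'\rangle\cong C_n^2$, with Lemma~\ref{l10}.2 used to split off $H$. This leaves only the case $\{g_2',g_3'\}\subset\{g_1,g_2,g_3\}$, forcing $U_2=U_1$.
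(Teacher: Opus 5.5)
Item~1 is fine, but the rest has genuine gaps. In Item~2 you leave open exactly the step that carries the proof: excluding a cross-number-one atom $V=g_1^xg_2^{n-x}$. The missing idea is to combine integrality of cross numbers on half-factorial sets with one substitution by $U$. Take $1\le x\le n/2$ by symmetry. Then $V\cdot g_3^n=U^x\cdot g_2^{n-2x}g_3^{n-x}$, and the cofactor lies over the half-factorial set $\{g_2,g_3\}$, so its cross number $(2n-3x)/n$ must be an integer. This forces $x=n/3$, and then $Vg_3^n$ has lengths $2$ and $x+1$, a gap $(n-3)/3\in[1,n-4]$.

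Similarly, ``the same length-gap argument'' for $h=ag_1+bg_2\in G_0\cap H$ is not yet an argument. You need that each $\{g_i,g_j,h\}$ is an LCN-set, since a sub-one atom would be $g_ig_jh$ and force $h=g_k$. Hence each is half-factorial by Lemma~\ref{l4}.3, so $hg_1^{n-a}g_2^{n-b}$, $hg_2^{n+a-b}g_3^{a}$ and $hg_1^{n+b-a}g_3^{b}$ all have integral cross number. This gives $3h=0$ and $h=(n/3)g^*$ with $g^*\in\{g_1,g_2,g_3\}$. Then, for $g^*=g_1$, the identity $(hg_2^{n/3}g_3^{n/3})(h^2g_1^{n/3})=h^3U^{n/3}$ yields the forbidden gap.

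Your Item~3 sketch never says which two factorizations you compare. What is missing is a reduction to Item~2. Let $a$ be minimal with $h':=ah\in H$. Every zero-sum sequence over $\{h,g_1,g_2,g_3\}$ contains $h$ with multiplicity divisible by $a$, so $h^a\mapsto h'$ is a length-preserving isomorphism onto $\mathcal B(\{h',g_1,g_2,g_3\})$. That set therefore still has minimum distance $n-3$. Since $h'\ne0$ has order less than $n$, applying $G_0\cap H\subseteq\{0,g_1,g_2,g_3\}$ to it gives a contradiction.

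Item~4 is where your plan actually fails, not just where it is incomplete. Using Item~3 for $H_2=\langle g_1',g_2'\rangle$ together with Lemma~\ref{l10}.2 cannot give a contradiction, because the configuration is compatible with every group-theoretic conclusion you would invoke. In $C_n^3$ with basis $e_1,e_2,e_3$, the atoms $U_1=e_1e_2(-e_1-e_2)$ and $U_2=e_1e_3(-e_1-e_3)$ share $e_1$. Yet $\langle e_3\rangle\cap\langle e_1,e_2\rangle=\langle -e_1-e_3\rangle\cap\langle e_1,e_2\rangle=0$, symmetrically for $U_2$, and both subgroups are direct summands. So the contradiction must come from the distance hypothesis once more.

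Suppose $g_1=g_1'$ is the only common term. Item~3 gives $\langle g_2'\rangle\cap H=\langle g_3'\rangle\cap H=0$. Together with the independence of $g_2,g_3$ and $g_2'+g_3'=-g_1=g_2+g_3$, this makes $g_2^{n-1}g_3^{n-1}g_2'g_3'$ an atom of cross number $2$. By Lemma~\ref{l4}.2 and Lemma~\ref{l1}.3, its four-element support is a non-half-factorial LCN-set with minimum distance at most $4-2=2<n-3$. This contradicts the fact that $n-3$ divides the minimum distance of every non-half-factorial subset of $G_0$.
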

\begin{proof}
Set $d=n-3$. Then $d\ge \lfloor n/2\rfloor\ge 3$ and Item 1  follows from
Lemma~\ref{l4}.1.

2.  By Item~1, every proper subset of $\{g_1,g_2,g_3\}$
is an LCN-set and hence is half-factorial by Lemma~\ref{l4}.3. Suppose
$V\in\Ato(\{g_1,g_2,g_3\})\setminus\{g_1^n,g_2^n,g_3^n,U\}$.
Since $U$ divides every sequence containing all three elements $\{g_1,g_2,g_3\}$, the support of $V$ is a proper subset of $\{g_1,g_2,g_3\}$, whence $\cro(V)=1$.
Since $V\neq g_i^n$ for all $i\in [1,3]$, we may assume by symmetry that $V=g_1^xg_2^{n-x}$, where $x\in [1,\lfloor n/2\rfloor]$.
Then 
\[
V\cdot g_3^n=U^x\cdot(g_2^{n-2x}g_3^{n-x}) \quad \text{ and }\quad  \mathsf L(g_2^{n-2x}g_3^{n-x})=\{\mathsf k(g_2^{n-2x}g_3^{n-x})\}=\left\{\frac{2n-3x}{n}\right\}\,,
\]
whence $\frac{2n-3x}{n}=1$, $x=\frac{n}{3}\ge 2$, and $d$ divides $(x+1)-2=\frac{n-3}{3}<n-3$, a contradiction.
Therefore
$\Ato(\{g_1,g_2,g_3\})=\{g_1^n,g_2^n,g_3^n,U\}$.
In particular, $g_1,g_2$ are independent and $H\cong C_n^2$.

Suppose $h=ag_1+bg_2\in(G_0\cap H)\setminus\{0,g_1,g_2,g_3\}$, with $0\le a,b\le n-1$. Each of $\{g_1,g_2,h\}$, $\{g_2,g_3,h\}$,
and $\{g_3,g_1,h\}$ is an LCN-set, since an atom with cross number less than one
would be the product of its three distinct elements, forcing $h\in \{g_1,g_2,g_3\}$.
Each set is therefore half-factorial. The sequences
$h g_1^{n-a} g_2^{n-b}$, $h g_2^{n+a-b} g_3^a$, and
$h g_1^{n+b-a} g_3^b$ are zero-sum, so their cross numbers are integers
\[
\frac{1}{\ord(h)}+\frac{2n-a-b}{n},\qquad
\frac{1}{\ord(h)}+\frac{n+2a-b}{n},\qquad
\frac{1}{\ord(h)}+\frac{n+2b-a}{n}\in\mathbb Z,
\]
whence
\[
 a+b\equiv a-2b\equiv b-2a\equiv n/\ord(h)\pmod n.
\]
Therefore $3a\equiv3b\equiv0\pmod n$, so $3h=0$ and $\ord(h)=3$.
Now $a,b\in\{0,n/3,2n/3\}$, and hence $a+b\equiv n/3 \pmod n$ implies that either $\{a,b\}=\{0, n/3\}$ or $a=b=2n/3$. Thus $h=(n/3)g^*$ for some $g^*\in \{g_1,g_2,g_3\}$.
By symmetry, we may assume that
$h=(n/3)g_1$.
 The sequences
$B:=h {g_2}^{n/3} {g_3}^{n/3}$ and $C:=h^2 g_1^{n/3}$ are atoms with cross number one. Thus
$B\cdot C=h^3\cdot U^{n/3}$ implies that $d$ divides $(n/3+1)-2=(n-3)/3<n-3$,
a contradiction. Thus $G_0\cap H\subseteq\{0,g_1,g_2,g_3\}$.
\medskip

3. Let $h\in G_0\setminus \{g_1,g_2,g_3\}$. If $h=0$, the assertion is trivial. Thus we may suppose $h\neq 0$.  By Item~2, we have $h\notin H$. Assume to the contrary that $\langle h\rangle\cap H\ne0$. Let $a$ be the
least positive integer such that $h':=ah\in H$. Then $h'\ne0$ and
$\ord(h')=\ord(h)/a<n$, so $h'\notin \{g_1,g_2,g_3\}$. By the minimality of $a$,
every zero-sum sequence over $\{h,g_1,g_2,g_3\}$ contains $h$ with
multiplicity divisible by $a$. Replacing each $h^a$ by $h'$ gives
an isomorphism between the zero-sum monoids over $\{h,g_1,g_2,g_3\}$
and $\{h',g_1,g_2,g_3\}$ that preserves factorization lengths, whence
\[
\min\Delta(\{h,g_1,g_2,g_3\})=\min\Delta(\{h',g_1,g_2,g_3\})\,.
\]
Since $\{h,g_1,g_2,g_3\}\subset G_0$ and
$\min\Delta(\{g_1,g_2,g_3\})=\min\Delta(G_0)=d$, we obtain
\[
\min\Delta(\{h',g_1,g_2,g_3\})=\min\Delta(\{h,g_1,g_2,g_3\})=d.
\]
By applying item~2 to $\{h',g_1,g_2,g_3\}$ (instead of $G_0$), we obtain
$\{h',g_1,g_2,g_3\}\cap H\subseteq \{0,g_1,g_2,g_3\}$, a contradiction.

4. Let $U_1,U_2\in\Ato(G_0)$ be distinct atoms of cross number
less than one. By Item~1, write $U_1=g_1 g_2 g_3$ and
$U_2=h_1 h_2 h_3$, where all terms have order $n$.
If their supports meet in two elements, the zero-sum property makes
their third elements equal, contrary to $U_1\ne U_2$.
Suppose they meet in one element, say $g_3=h_3$.
Item~3 gives $h_1,h_2\notin \langle g_1,g_2,g_3\rangle $, whence
$g_1^{n-1} g_2^{n-1} h_1h_2$ is an atom with cross number two.
By Lemma~\ref{l4}.2, $\{g_1,g_2,h_1,h_2\}$ is an LCN-set and
\[
\min\Delta(\{g_1,g_2,h_1,h_2\})\le 4-2=2<d,
\]
a contradiction.
\end{proof}

\begin{lemma}
\label{l12}
Let $G$ be a finite abelian group of exponent $n\ge6$, and let
$G_0\subset G$ satisfy $\min\Delta(G_0)=n-3$. Let
$U\in\Ato(G_0)$ have $\cro(U)<1$. Then every
$E\subset G_0\setminus\supp(U)$ with $|E|\le n-3$ satisfies
\[
 \langle\supp(U)\rangle\cap\langle E\rangle=0.
\]
\end{lemma}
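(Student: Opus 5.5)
We argue by contradiction with a minimal counterexample. Throughout, write $\supp(U)=\{g_1,g_2,g_3\}$, $H=\langle g_1,g_2,g_3\rangle$ and $d=n-3$. By Lemma~\ref{l11}.2 we have $H\cong C_n^2$, and Lemma~\ref{l10}.2 makes $H$ a direct summand, so $G=H\oplus K$. Suppose some $E\subset G_0\setminus\supp(U)$ with $|E|\le n-3$ satisfies $H\cap\langle E\rangle\ne0$, and choose such an $E$ of minimal size. Lemma~\ref{l11}.3 gives $|E|\ge2$. Minimality means that no proper subset of $E$ meets $H$ nontrivially. Working one primary component at a time, I expect this to force $H\cap\langle E\rangle$ to be cyclic, generated by some $h=\sigma(T)$ with $T$ a zero-sum free sequence over $E$ and $\supp(T)=E$. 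Consequently there is a zero-sum sequence $W=T\,g_1^{x_1}g_2^{x_2}g_3^{x_3}$, and after cancelling copies of $U$ we may assume at most two of the $x_i$ are nonzero. Every term of $E$ must then occur in $W$, and every atom over $E\cup\supp(U)$ that uses a term of $E$ uses all of $E$.

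The second step extracts half-factoriality from the size bound. For each $i$, the set $E_i:=E\cup\{g_i\}$ has at most $n-2=d+1$ elements. I first show that $E_i$ is an LCN-set. By Lemma~\ref{l4}.1, an atom with cross number less than one is a product of three distinct elements of order $n$. If such an atom $V$ over $E_i$ contained $g_i$, it would meet $\supp(U)$, contradicting Lemma~\ref{l11}.4. If instead $\supp(V)\subset E$, then Lemma~\ref{l11}.3 and minimality should rule it out: replace $E$ by a smaller set using $\sigma(V)=0$ to eliminate one element of $\supp(V)$. Hence $E_i$ is an LCN-set, and Lemma~\ref{l4}.3 makes it half-factorial. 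By Lemma~\ref{l1}.3, every atom over $E_i$ has cross number exactly $1$.

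The third step mimics the congruence computations in the proof of Lemma~\ref{l11}.2. Writing $-h=a g_1+b g_2$, we get the zero-sum sequences
\[
T\,g_1^{n-a}g_2^{n-b},\qquad T\,g_2^{n+a-b}g_3^{a},\qquad T\,g_1^{n+b-a}g_3^{b}.
\]
If $-h$ lies in some $\langle g_i\rangle$, one of these lives over a half-factorial $E_i$. Otherwise they live over $E\cup\{g_i,g_j\}$, and we factor against $g_k^n$ and $U$ as in Lemma~\ref{l11}.2. Integrality of cross numbers then gives congruences $a+b\equiv a-2b\equiv b-2a\pmod n$ up to a common shift $n\cro(T)$. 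This forces $3a\equiv3b\equiv0\pmod n$, so $-h$ is $(n/3)$ times some $g_i$, or $a=b=2n/3$. In that case we compare two factorizations of a product such as $(T\,g_i^{2n/3})\cdot g_j^{n}g_k^{n}$ against $U^{n/3}\cdot(\ldots)$. Their lengths differ by roughly $n/3-1<d$, which contradicts $d\mid\min\Delta(G_0)$ via Lemma~\ref{l2}.1.

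The main obstacle is the step that only reaches $E\cup\{g_i,g_j\}$, a set of up to $n-1=d+2$ elements. Lemma~\ref{l4}.3 does not apply to it directly, so half-factoriality cannot be read off. I would first try to keep every comparison inside the sets $E_i$, using the isomorphism trick of Lemma~\ref{l11}.3: replace $T$ by a single "virtual" element of order $\ord(h)$. This reduces the configuration to $\{h',g_1,g_2,g_3\}$ with $h'\in H$, where Lemma~\ref{l11}.2 already yields the contradiction. The delicate point is justifying that this replacement preserves $\min\Delta$ when $|E|>1$, and this is exactly where minimality of $E$ and the bound $|E|\le n-3$ have to be used.
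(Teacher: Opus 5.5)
Your Steps~1 and~2 are essentially sound and match the paper. Minimality of $E$ gives $g\notin\langle E\setminus\{g\}\rangle$, and together with Lemma~\ref{l11}.4 this makes each $E\cup\{g_i\}$ an LCN-set, hence half-factorial by Lemma~\ref{l4}.3. One small correction: it is not true that every atom over $E\cup\supp(U)$ using a term of $E$ uses all of $E$ (think of $e^{\ord(e)}$); this holds only for atoms that also use a term of $\supp(U)$.

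The genuine gap is the one you flag yourself, and it is the heart of the lemma. Step~3 needs integral cross numbers for the sequences $Tg_i^xg_j^y$, i.e.\ half-factoriality of $E\cup\{g_i,g_j\}$. This is available only when $|E|\le n-4$, because then these sets have at most $d+1$ elements and are LCN by your Step~2 argument. In that easy case a single length comparison suffices. Take $A=Tg_1^ag_2^b$ with $1\le a\le b$; the identity $A\cdot g_3^n=U^a\cdot(Ag_3^nU^{-a})$ gives two lengths differing by $a(n-3)/n$, and $d\mid a(n-3)/n$ forces $n\mid a$. Your three-congruence version is weaker. The common shift $n\cro(T)$ is unconstrained, unlike $n/\ord(h)$ in Lemma~\ref{l11}.2, so $3a\equiv3b\equiv0\pmod n$ leaves cases such as $(a,b)=(n/3,2n/3)$ that your case list omits.

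For $|E|=n-3$ your proposed remedy fails. The trick in Lemma~\ref{l11}.3 works because, for a single element $h$, every zero-sum sequence $B$ over $\{h,g_1,g_2,g_3\}$ has $a\mid\mathsf v_h(B)$, so $h^a\mapsto h'$ is a monoid isomorphism. For $|E|\ge2$ the monoid $\{S\in\mathcal F(E):\sigma(S)\in H\}$ is not free. It contains $\mathcal B(E)$ and many different $T$ with the same sum, and the factorizations of the $E$-parts contribute lengths that a single letter $h'$ cannot record. So there is no length-preserving identification with $\mathcal B(\{h',g_1,g_2,g_3\})$, and minimality of $E$ does not change this.

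The paper argues directly instead. Minimality embeds $J=H\cap\langle E\rangle$ into the cyclic group $\langle E\rangle/\langle E\setminus\{e\}\rangle$, so $J$ is cyclic of order at most $n$. It then proves $J\cap\langle g_i\rangle=0$ for each $i$, with a separate analysis for $n=6$. Next it shows that the sequences $Tg_i^ag_j^b$ are \emph{atoms}, rather than that their supports are half-factorial. Comparing $m$ with $a+b+m\cro(T)$ in $\Len(A_1^m)$, together with the analogous lengths for two companion atoms, uses only half-factoriality of $E$ and yields $d\mid m-3a$ and $d\mid 2m-3b$. This excludes orders $1<m<n$, and a final congruence argument rules out $|J|=n$ prime. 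None of this appears in your proposal, so the case $|E|=n-3$ remains open.
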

\begin{proof}
By Lemma~\ref{l11}.1, we let $U=g_1 g_2 g_3$, where
$g_1,g_2,g_3$ are distinct elements of order $n$. Set 
\[
\text{$H=\langle g_1,g_2,g_3\rangle$\quad  and \quad $d=n-3$.}
\]
Then $d=\min\Delta(G_0)\ge \lfloor n/2\rfloor\ge 3$ and $H\cong C_n^2$ by Lemma~\ref{l11}.2.
Assume to the contrary that there is a subset
$E\subset G_0\setminus \{g_1,g_2,g_3\}$ with $|E|\le d$ such that
$H\cap\langle E\rangle\ne0$. Choose $E$ inclusion-minimal with
$J:=H\cap\langle E\rangle\ne0$. Then 
\begin{equation}\label{le-e-1}
g\not\in \langle E\setminus\{g\}\rangle\quad  \text{ for every }\quad g\in E.
\end{equation}
Let $A\in \Ato(E\cup \{g_1,g_2,g_3\})$ with $\supp(A)\subsetneq E\cup \{g_1,g_2,g_3\}$.
Suppose $\mathsf k(A)<1$. By Lemma \ref{l11}.1 again, we have $A=h_1h_2h_3$ with $h_3=-h_1-h_2$.
Then \eqref{le-e-1} implies that $\supp(A)\not\subset E$ and Lemma \ref{l11}.4 implies that $\supp(A)\cap \{g_1,g_2,g_3\}=\emptyset$, a contradiction. Thus $\mathsf k(A)\ge 1$ and hence
\begin{equation}\label{le-e-2}
	E\cup \{g_1,g_2,g_3\}\setminus\{g_i\}\quad  \text{ is an LCN-set for every }\quad i\in [1,3].
\end{equation}

If $|E|<d$, then Lemma~\ref{l4}.3 implies that those
three LCN-sets are half-factorial.   Choose $T\in\mathcal F(E)$ with $0\ne\sigma(T)\in J$. Since $H\cong C_n^2$ and $g_3=-g_1-g_2$, there exist distinct $i,j\in [1,3]$ and $a,b\in [1,n-1]$ such that $-\sigma(T)=ag_i+bg_j$.
After renumbering  $g_1,g_2,g_3$ if necessary, we may assume that 
$-\sigma(T)=ag_1+bg_2$ with $1\le a\le  b\le n-1$. 
Since $A:=T g_1^a g_2^b$ is zero-sum and both $\supp(A), \supp(Ag_3^nU^{-a})$ are half-factorial by \eqref{le-e-2},  it follows from
$A\cdot g_3^n=U^a\cdot (Ag_3^nU^{-a})$ that $d$ divides $(a+\mathsf k(Ag_3^nU^{-a}))-(\mathsf k(A)+1)=\frac{a(n-3)}{n}$, whence $a=0$, a contradiction. 
Therefore 
\begin{equation}\label{le-e-3}
	|E|=d\,.
\end{equation}
By Lemma~\ref{l4}.3, we have
\begin{equation}\label{le-e-11}
E\cup \{g_i\} \text{ is half-factorial for every }i\in [1,3].
\end{equation}


For each $e\in E$, consider
\[
\varphi_e:J\longrightarrow
\langle E\rangle/\langle E\setminus\{e\}\rangle,
\qquad \text{ defined by }\quad  j\longmapsto j+\langle E\setminus\{e\}\rangle.
\]
Its kernel is $J\cap \langle E\setminus\{e\}\rangle=H\cap\langle E\setminus\{e\}\rangle=0$ by the choice of $E$. Thus $J$ embeds in a cyclic group of order $$\ord\big(e+\langle E\setminus\{e\}\rangle\big)\le \ord(e)\le n.$$ In particular, 
\begin{equation}\label{le-e-7}
\text{$\varphi_e$ is injective\quad  and \quad  $J$ is cyclic with $|J|\le \ord\big(e+\langle E\setminus\{e\}\rangle\big)\le  \ord(e)\le  n$.}
\end{equation}

We proceed by proving four claims.

\begin{claim}\label{c4}
	If $|J|=n$, then $E$ is independent and every element has order $n$.
\end{claim}

\begin{proof}[Proof of \ref{c4}]
	Suppose $|J|=n$. Then \eqref{le-e-7} implies that $\ord\big(e+\langle E\setminus\{e\}\rangle\big)=\ord(e)=n$ 
	for every $e\in E$. If  $\sum_{e\in E}a_ee=0$ for some $a_e\in [0, \ord(e)-1]$, then for each $e_0\in E$, we have
	$$0+\langle E\setminus\{e_0\}\rangle=\sum_{e\in E}(a_ee+\langle E\setminus\{e_0\}\rangle)=a_{e_0}e_0+\langle E\setminus\{e_0\}\rangle=a_{e_0}\left(e_0+\langle E\setminus\{e_0\}\rangle\right),$$ whence $\ord\big(e_0+\langle E\setminus\{e_0\}\rangle\big)=n$  divides $a_{e_0}$ and hence
	\[
	\text{$a_{e_0}=0$ \quad for every $e_0\in E$,}
	\]  implying 
	 $E$ is independent.
	\qedhere[End of \ref{c4}]
\end{proof}

\begin{claim}\label{c1}
$J\cap\langle v\rangle=0$ for every $v\in \{g_1,g_2,g_3\}$.
\end{claim}

\begin{proof}[Proof of \ref{c1}]
	Let $v\in \{g_1,g_2,g_3\}$. Assume to the contrary that  $h:=|J\cap\langle v\rangle|=|\langle E\rangle\cap \langle v\rangle|>1$. Then  $a:=n/h$ is the least positive integer such that $av\in \langle E\rangle$. Choose a zero-sum free sequence $T\in \mathcal F(E)$ such that  $A=v^a\cdot T$ is zero-sum. Then $A$ is an atom with $\supp(A)=E\cup\{v\}$ by the minimality of $E$. 
	It follows from \eqref{le-e-11} that $\supp(A)$ is half-factorial, whence $$\cro(T)=1-1/h=(n-a)/n.$$
	
	If $\mathsf v_g(T)\ge 2$ for all $g\in E$, then $n-a=n\cro(T)\ge |T|\ge 2|E|=2(n-3)\ge n$, a contradiction. Thus 
	\begin{equation}\label{le-e-5}
	\text{	$\mathsf v_{g^*}(T)=1$ for some $g^*\in E$.}
	\end{equation}
Suppose  $\{g_1,g_2,g_3\}=\{v,w,t\}$. Then $w+t=-v$ and
 the sequence
$B:=w^{n-a} t^{n-a} T$ is zero-sum. Assume that $B=B_1B_2$, where $B_1,B_2$ are nontrivial zero-sum subsequences. Then \eqref{le-e-5} implies that there exists $i\in [1,2]$, say $i=1$, such that $\mathsf v_{g^*}(B_1)=0$. Since $w^{n-a} t^{n-a}$ is zero-sum free, we obtain $B_1=W_1W_2$ with $W_1\mid w^{n-a} t^{n-a}$ and $1\neq W_2\mid T$, then $0\neq \sigma(W_2)=-\sigma(W_1)\in H\cap \langle E\setminus\{g^*\}\rangle$, a contradiction to the minimality  of $E$. Thus 
\[
\text{ $B$\quad  is an atom with\quad  $\cro(B)=\frac{2n-2a}{n}+\cro(T)=\frac{3n-3a}{n}$.}
\]
Set $B'=w^a t^a T^{h-1}$. Since its support is an LCN-set by \eqref{le-e-2}, every $\ell\in\Len(B')$ satisfies
\[
 1\le\ell\le \cro(B') =\frac{2a}{n}+(h-1)\cro(T)= h-2+\frac3h<h.
\]
Since $B\cdot B'=w^n\cdot t^n\cdot T^h$ and $E$ is half-factorial, we obtain that $d$ divides $(h+1)-(1+\ell)=h-\ell$.
 If $h<n$, then $0<h-\ell\le n/2-1<d$,
a contradiction. Thus  $h=n$ and $a=1$. Since $\mathsf L(B^n)\supset \{n, n\mathsf k(B)\}$, we obtain that $d=n-3$ divides $n(\cro(B)-1)=2n-3$, forcing 
\[
\text{$n=6$\quad  and\quad  $|E|=d=3$.}
\] In view of $h=n=6$ and \eqref{le-e-7}, it follows from $\langle v\rangle\subseteq J$ that $|\langle v\rangle| = |J|=n=6$. By \ref{c4}, 
\begin{equation}\label{le-e-8}
	\text{the three elements of $E$ are independent and
		have order six. }
\end{equation}

Note that $\sigma(T)=-av=-v$. Let $e\in E$. Then \eqref{le-e-7} implies that $\varphi_e$ is injective  and hence $\ord(\varphi_e(-v))=\ord(v)=n$. 
Since $$\varphi_e(-v)=\varphi_e(\sigma(T))=\mathsf v_e(T)e+\langle E\setminus\{e\}\rangle\,,$$
it follows from \eqref{le-e-8} that 
\[
n=\ord(\varphi_e(-v))=\ord\big(\mathsf v_e(T)e+\langle E\setminus\{e\}\rangle\big)=\ord\big(\mathsf v_e(T)e\big)=\frac{n}{\gcd(\mathsf v_e(T), n)},
\]
whence $\gcd(\mathsf v_e(T), n)=1$. Since $n=6$, we obtain 
\begin{equation}\label{le-e-9}
\text{$\mathsf v_e(T)\in\{1,5\}$ for every $e\in E$.}
\end{equation}
In view of  \eqref{le-e-8} and  $\cro(T)=1-1/6=5/6$, it follows that 
$\sum_{e\in E}\mathsf v_e(T)=|T|=5$, a contradiction to \eqref{le-e-9}.
\qedhere[End of \ref{c1}]
\end{proof}

\begin{claim}\label{c2}
Let $T\in\mathcal F(E)$ with $0\ne\sigma(T)\in J$ satisfy $\mathsf v_e(T)<\ord(e)$ for
every $e\in E$,  and let $v,w\in \{g_1,g_2,g_3\}$ be distinct such that 
$T v^a w^b$ is zero-sum for some $1\le a,b\le n-1$.  Then $T v^a w^b$ is an atom.
\end{claim}

\begin{proof}[Proof of \ref{c2}]
We may assume that $v=g_1$ and $w=g_2$.	Suppose $V$ is an atom with $\cro(V)=1$ and $\supp(V)=E\cup\{g_1,g_2\}$.
 Since $|E|=d=n-3$, we obtain 
$\mathsf v_{g_1}(V)+\mathsf v_{g_2}(V)\le3$.
If $\mathsf v_{g_1}(V)=\mathsf v_{g_2}(V)=1$, set $W:=Vg_3^nU^{-1}$. Then
 $\supp(W)=E\cup \{g_3\}$ is half-factorial by \eqref{le-e-11}, a contradiction to the fact that 
$\cro(W)=2-\frac{3}{n}$ is not an integer. Otherwise, after renumbering if necessary, we may assume that 
\[
V=g_1^2 g_2\prod_{e\in E}e,
\qquad  \text{ with }\quad -2g_1-g_2=\sum_{e\in E}e\in J.
\]
The cross number $\cro(V)=1$ implies that every element in the support of $V$ has order $n$.
Since $-2g_1-g_2$ has order $n$, we have
$J=\langle-2g_1-g_2\rangle$ and 
\[
\text{$E$ is independent by \ref{c4}.}
\]
If $n$ is even, then
$0\neq (n/2)(-2g_1-g_2)\in J\cap\langle g_2\rangle$, a contradiction to \ref{c1}. Thus $n$ is odd.
Set
\[
W=g_1 g_2^{(n+1)/2}\prod_{e\in E}e^{(n+1)/2}.
\]
If $W$ is not an atom, then we can choose  a proper
zero-sum subsequence $W_1$ with $g_1\not\in \supp(W_1)$. Since $E$ is independent, we infer that $g_2\in \supp(W_1)$, whence $0\neq xg_2\in J\cap\langle g_2\rangle$
for some $x\in[1,(n+1)/2]$, contrary to \ref{c1}. Thus $W$ is an atom. Since 
\[
V^{(n+1)/2}=g_1^n\cdot W,
\]
we obtain
\[
 d\mid \frac{n+1}{2}-2=\frac{n-3}{2}=\frac d2,
\]
a contradiction. Thus we have shown
\begin{equation}\label{le-e-12}
\text{ there is no atom $V$ with $\cro(V)=1$ and $\supp(V)=E\cup\{g_1,g_2\}$.}
\end{equation}

Let $A:=Tv^aw^b=T g_1^a g_2^b$.  Choose $s\in \mathsf L(A)$ and $t\in \mathsf L(A^{-1}\prod_{u\in E\cup\{g_1,g_2\}}u^{\ord(u)})$.
Then
\[
 2\le s+t\le |E\cup\{g_1,g_2\}|= d+2, \qquad  \text{ with }\quad s+t\equiv d+2\pmod d.
\]
If $s+t=d+2$, then $A$ is a product of $s$ atoms having cross number one, whence we may choose an atom $A_0\mid A$ with $\cro(A_0)=1$ and $\mathsf v_{g_1}(A_0)+\mathsf v_{g_2}(A_0)\neq 0$.  The minimality of $E$ implies that $E\subset \supp(A_0)$. In view of \eqref{le-e-12}, we obtain $\supp(A_0)\neq E\cup\{g_1,g_2\}$, whence, after renumbering if necessary, we may assume that 
 $\supp(A_0)=E\cup\{g_1\}$. Therefore
$0\neq xg_1\in J\cap\langle g_1\rangle$ for some $x\in[1,n-1]$,
contrary to \ref{c1}. Thus $s=t=1$ and hence $A=Tv^aw^b$ is an atom.
\qedhere[End of \ref{c2}]
\end{proof}

\begin{claim}\label{c3}
$J$ has no element of order $m$ with $1<m<n$.
\end{claim}

\begin{proof}[Proof of \ref{c3}]
Assume to the contrary that there exists $T\in\mathcal F(E)$ with
$\mathsf v_e(T)<\ord(e)$ for all
$e\in E$ such that $\sigma(T)\in J$ has order $m$ with $1<m<n$.
Set $m'=n/m$. Since $H=\langle g_1,g_2\rangle\cong C_n^2$ by Lemma \ref{l11}.2,
we may write $-\sigma(T)=am'g_1+bm'g_2$ with $0\le a,b<m$.
By \ref{c1} and $g_3=-g_1-g_2$, we obtain $a\neq 0$, $b\neq 0$, and $a\neq b$.
After renumbering if necessary, we may assume $$1\le a<b\le m-1.$$
By \ref{c2}, the zero-sum sequences
\[
A_1:=T g_1^{m'a} g_2^{m'b},\quad
A_2:=T g_2^{m'(b-a)} g_3^{n-m'a},\quad
A_3:=T g_1^{n+m'(a-b)} g_3^{n-m'b}
\]
are all atoms.

Since  $J\cap\langle g_2\rangle=0$ by \ref{c1}, we obtain that $-\sigma(T)+\langle g_2\rangle \in H/\langle g_2\rangle$ has order 
$\ord(-\sigma(T))=m$. Note that $-\sigma(T)+\langle g_2\rangle=am'g_1+\langle g_2\rangle$ has order $\ord(am'g_1)=m/\gcd(a,m)$. Thus 
$\gcd(a,m)=1$.
Since $E$ is half-factorial, we have $\Len(T^m)=\{m\cro(T)\}$ and hence 
\[
\mathsf L(A_1^m)\supset \{m, a+b+m\cro(T)\},\quad \mathsf L(A_2^m)\supset \{m, m+b-2a+m\cro(T)\},\quad
\mathsf L(A_3^m)\supset \{m, 2m+a-2b+m\cro(T)\}.
\]
 Subtracting the resulting length differences
gives
\begin{equation}\label{le-e-21}
 d\mid m-3a,\qquad \text{ and }\quad  d\mid2m-3b.
\end{equation}
Since $m$ is a proper divisor of $n$, we have $m\le n/2$, whence
$d=n-3\ge2m-3$. Together with $1\le a\le m-2$ and
$2\le b\le m-1$, this gives
\[
-d<m-3a<d,\qquad \text{ and }\quad -d<2m-3b<d.
\]
By \eqref{le-e-21}, we obtain $m=3a$ and $2m=3b$.
Thus $\gcd(a,m)=1$ implies  
\[
\text{$a=1$, $m=3$, and $b=2$,}
\]
whence $m'=n/3$ and $-\sigma(T)=m'g_1+2m'g_2$.
Let
\[
c(T)=\{e\in E\colon 2\mathsf v_e(T)\ge\ord(e)\}.
\]
Then there exists $T_2\in \mathcal F(E)$ such that 
\[
T^2=\left(\prod_{e\in c(T)}
e^{\ord(e)}\right)T_2, 
\qquad \text{ with } \mathsf v_e(T_2)<\ord(e)\quad \text{ for every }e\in E.
\]
 Since $\sigma(T_2)=2\sigma(T)=-2m'g_1-m'g_2\ne0$, it follows from \ref{c2} that  $T_2 g_1^{2m'} g_2^{m'}$ and $T_2g_2^{2m'}g_3^{m'}$ are atoms. Consequently
\[
U^{m'}\cdot  (T_2 g_1^{2m'} g_2^{m'})=g_1^n\cdot (T_2 g_2^{2m'} g_3^{m'})\,,
\]
whence $d=n-3$ divides $(m'+1)-2=\frac{n-3}{3}$, a contradiction.
\qedhere[End of \ref{c3}]
\end{proof}

By \eqref{le-e-7}, we have $1<|J|\le n$, whence \ref{c3} implies that $|J|=n$ is prime. In particular, $n\ge7$.
Let $0\ne h\in J$. Choose $T\in\mathcal F(E)$ with
$\mathsf v_e(T)<\ord(e)$ for every $e\in E$ such that $-\sigma(T)=h$.
By symmetry, $H=\langle g_1,g_2\rangle$, and \ref{c1}, we may assume $h=ag_1+bg_2$ with
$1\le a<b\le n-1$.  Then \ref{c2} implies that
\[
A_1:=T g_1^a g_2^b,\qquad
A_2:=T g_2^{b-a} g_3^{n-a},\qquad
A_3:=T g_1^{n+a-b} g_3^{n-b}
\]
are all atoms. Since $E$ is half-factorial, we obtain 
\[
\mathsf L(A_1^n)\supset\{n,a+b+n\cro(T)\},\quad \mathsf L(A_2^n)\supset\{n,n+b-2a+n\cro(T)\},\quad
\mathsf L(A_3^n)\supset\{n,2n+a-2b+n\cro(T)\}\,,
\]
whence 
\[
n\equiv a+b+n\cro(T)\equiv n+b-2a+n\cro(T)\equiv 
2n+a-2b+n\cro(T) \pmod d\,.
\]
It follows that  $d\mid n-3a$ and $d\mid2n-3b$.
Since $n\equiv3\pmod d$ and $n\ge 7$ is prime, we conclude $\gcd(3,d)=1$ and hence
\begin{equation}\label{le-e-31}
a\equiv1\pmod d,\qquad \text{ and }\quad  b\equiv2\pmod d.
\end{equation}
This holds for every nonzero $h\in J$ after choosing $a<b$.
Now take $h=ag_1+bg_2\ne0$. Since $n$ is prime, there exists $k$ such that $ka\equiv3\pmod n$, and hence
$kh=3g_1+b'g_2$ for some $b'\in[1,n-1]$.
Applying \eqref{le-e-31} to $kh$, we obtain either
$3\equiv1 \pmod d$ or $3\equiv2\pmod d$, a contradiction to  $d=n-3\ge4$.
\end{proof}

\section{Elasticity with respect to minimal distances}\label{s5}
In this section, we compare $\rho^*(G,d)$ and $\bcro(G,d)$ for
values of $d$ close to the rank or exponent of $G$.
We first apply the results of Section~\ref{s4} to subsets with
minimum distance $\exp(G)-3$, and then study subsets with minimum
distance $\exp(G)-2$. These comparisons will be used in
Section~\ref{s6} to prove Theorem~\ref{t2}.

\begin{proposition}\label{p3}
Let $G$ be a finite abelian group with $\exp(G)=n\ge 6$ and let
$d=n-3$. If either $\rk(G)=n-2$, or
$G\cong C_2\oplus C_{n_2}\oplus\cdots\oplus C_{n_{n-1}}$ with
$2\mid n_2\mid\cdots\mid n_{n-1}=n$, then
\[
d\in \Delta_1(G) \quad \text{ and }\quad  \rho^*(G,d)=\bcro(G,d).
\]
\end{proposition}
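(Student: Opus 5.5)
We first show $d\in\Delta_1(G)$ and then prove $\rho^*(G,d)\le\bcro(G,d)$; the reverse inequality is Lemma~\ref{l6}.1. In both cases $d=n-3\le\rk(G)-1$. Indeed, in the first case $\rk(G)-1=n-3$, and in the second case $\rk(G)=n-1$. Since $n\ge6$ gives $\rk(G)\ge4\ge2$, Lemma~\ref{l2}.3 yields $d\in\Delta^*(G)\subset\Delta_1(G)$.

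Next I pin down the minimum distance. Let $G_0\subset G$ with $\Delta(G_0)\ne\emptyset$ and $d\mid\min\Delta(G_0)$. By Lemma~\ref{l2}.2 we have $\min\Delta(G_0)\le\max\{n-2,\rk(G)-1\}=n-2<2d$, because $d\ge3$. Hence $\min\Delta(G_0)=n-3$. By Lemma~\ref{l6}.4 we may also assume $0\notin G_0$.

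\textbf{If $G_0$ is an LCN-set,} then Lemma~\ref{l6}.2 gives $\rho(G_0)\le\bcro(G,d)$ directly.

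\textbf{Otherwise}, let $U_1,\dots,U_k$ be the atoms over $G_0$ with cross number less than one. By Lemma~\ref{l11}, each $U_i=g_{i,1}g_{i,2}g_{i,3}$ with $H_i:=\langle\supp(U_i)\rangle\cong C_n^2$, and the supports are pairwise disjoint. The plan is to prove the splitting
\[
 H_i\cap\langle G_0\setminus\supp(U_i)\rangle=0\qquad\text{for each } i.
\]
Granting this, $\mathcal B(G_0)$ is the coproduct of $\mathcal B(\supp(U_1)),\dots,\mathcal B(\supp(U_k))$ and $\mathcal B(G_0')$, where $G_0'=G_0\setminus\bigcup_i\supp(U_i)$. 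Elasticity then follows from the components:
\begin{itemize}
\item The elasticity of a coproduct is the maximum of the elasticities of its factors.
\item By Lemma~\ref{l11}.2, $\Ato(\supp(U_i))=\{g_{i,1}^n,g_{i,2}^n,g_{i,3}^n,U_i\}$. Hence $\rho(\supp(U_i))=n/3$, which is at most $1+d/2\le\bcro(G,d)$ by Lemma~\ref{l6}.1.
\item $G_0'$ is an LCN-set, because every atom over $G_0'$ is an atom over $G_0$ and none of the $U_i$ lies over $G_0'$. Also $\Delta(G_0')\subset\Delta(G_0)$, so $G_0'$ is either half-factorial or, by Lemma~\ref{l2}.1, has minimum distance divisible by $d$. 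In both cases $\rho(G_0')\le\bcro(G,d)$ by Lemma~\ref{l6}.2.
\end{itemize}

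To prove the splitting, suppose $H:=H_i$ meets $\langle G_0'\rangle$ nontrivially, where now $G_0':=G_0\setminus\supp(U_i)$. Then it contains an element of prime order $p$, and this element lies in $H_p$. By Lemma~\ref{l10}.2, $H$ is a direct summand of $G$; write $G_p=H_p\oplus K_p$. Apply Lemma~\ref{l10}.1 in $G_p$ to the set $\pi_p(G_0')$. It gives $E\subset G_0'$ with $|E|\le\rk(K_p)+1$ and $H_p\cap\langle\pi_p(E)\rangle\ne0$.

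This transfers back to $\langle E\rangle$. Let $m$ be the exponent of the $p'$-part of $G$. Then $m\langle E\rangle=m\,\pi_p\langle E\rangle=\pi_p\langle E\rangle$, since $m$ acts invertibly on $G_p$. Hence $H\cap\langle E\rangle\ne0$. If $|E|\le n-3$, this contradicts Lemma~\ref{l12}.

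The rank bound $|E|\le n-3$ is immediate in two situations:
\begin{itemize}
\item When $\rk(G)=n-2$, we have $\rk(K_p)\le n-4$ for every $p$.
\item In the second case with $p$ odd, the summand $C_2$ contributes nothing, so $\rk(G_p)\le n-2$ and again $\rk(K_p)\le n-4$.
\end{itemize}

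\textbf{Main obstacle.} The remaining case is $p=2$ in the second family. Here $\rk(K_2)=n-3$, so Lemma~\ref{l10}.1 only produces $|E|\le n-2$, one more than Lemma~\ref{l12} allows. I would try to close this gap in two ways, in order:
\begin{enumerate}
\item Rerun the argument of Lemma~\ref{l12} with $|E|=n-2$ and $J$ of order $2$. Claims~\ref{c1}--\ref{c3} still apply: Claim~\ref{c3} forbids $J$ of order $2<n$ provided $n$ is composite, and a single element of order $2$ seems to force a relation such as $d\mid(n/2+1)-2$, which is impossible.
\item Alternatively, choose the complement $K$ to contain the $C_2$ summand, and use the minimality of $E$ to discard one element whose $2$-part lies in that order-$2$ factor.
\end{enumerate}
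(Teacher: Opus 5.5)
Your overall architecture works. The reduction to the splitting $H_i\cap\langle G_0\setminus\supp(U_i)\rangle=0$, the coproduct decomposition, and the bound on $\rho(G_0')$ via Lemma~\ref{l6}.2 are all correct. The paper instead fixes one $U$, proves $J:=H\cap\langle R\rangle=0$, and then uses $\mathsf m(\langle R\rangle)\le\mathsf m(K)<d$ to show that no atom over $G_0$ has cross number above one; your ending is an equally valid alternative.

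\textbf{The gap.} The proof is incomplete exactly where you say: the $2$-primary part in the second family. Neither suggested repair works as stated.

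Repair (1) cannot reuse Lemma~\ref{l12}. Its proof first forces $|E|=d$, and then uses that $E\cup\{g_i\}$ has $d+1$ elements and is therefore half-factorial by Lemma~\ref{l4}.3. Claims~\ref{c1}--\ref{c3} all depend on this, and Claim~\ref{c2} also uses $|E|=n-3$ to get $\mathsf v_{g_1}(V)+\mathsf v_{g_2}(V)\le3$. None of this is available when $|E|=n-2$. Moreover, starting Lemma~\ref{l10}.1 from an element of order $2$ does not make $H\cap\langle E\rangle$ of order $2$. If $\mathsf v_2(n)\ge2$, nothing prevents it from containing elements of order $4$, and then $v^{n/2}T$ need not be an atom. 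So your ``single element of order $2$'' relation is not justified.

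Repair (2) already fails when $\mathsf v_2(n)=1$, e.g.\ $G\cong C_2\oplus C_6^4$. There $G_2$ is elementary abelian of rank $n-1$, so there is no distinguished $C_2$ factor, and nothing singles out an element of $E$ to discard.

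\textbf{The missing idea.} Split according to whether $2\cdot J=0$, where $J=H\cap\langle G_0'\rangle$; once the odd primes are handled, $J$ is a $2$-group.

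Suppose first $2\cdot J\ne0$. Work in $2\cdot G_2=2\cdot H_2\oplus 2\cdot K_2$. Multiplication by $2$ kills the $C_2$ summand, so $\rk(2\cdot K_2)\le\rk(C_{n_2}\oplus\cdots\oplus C_{n_{n-1}})-2=n-4$. Lemma~\ref{l10}.1 applied to $2\cdot\pi_2(G_0')$ then yields $E$ with $|E|\le n-3$ and $0\ne 2\cdot H_2\cap\langle 2\cdot\pi_2(E)\rangle\subset H\cap\langle E\rangle$, contradicting Lemma~\ref{l12}.

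Suppose instead $2\cdot J=0\ne J$. Here no bound on $|E|$ is needed.
\begin{enumerate}
\item Take $E\subset G_0'$ inclusion-minimal with $J_E:=H\cap\langle E\rangle\ne0$. For any $e\in E$, the map $J_E\to\langle E\rangle/\langle E\setminus\{e\}\rangle$ is injective, so $J_E$ is cyclic.
\item Since $2\cdot J=0$ and the elements of order $2$ in $H\cong C_n^2$ are the $(n/2)g_i$, after renumbering $J_E=\{0,(n/2)g_1\}$.
\item Choose $T\in\mathcal F(E)$ of minimal length with $g_1^{n/2}T$ zero-sum. Then both $g_1^{n/2}T$ and $g_2^{n/2}g_3^{n/2}T$ are atoms.
\item The identity $(g_1^{n/2}T)\cdot g_2^n\cdot g_3^n=U^{n/2}\cdot(g_2^{n/2}g_3^{n/2}T)$ gives factorizations of lengths $3$ and $n/2+1$. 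Hence $d=n-3$ divides $n/2-2\in[1,n-4]$, a contradiction.
\end{enumerate}
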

\begin{proof}In view of Lemma \ref{l2}, we obtain that $d=n-3\in [1, \mathsf r(G)-1]\subset \Delta^*(G)\subset \Delta_1(G)$. By Lemma \ref{l6}.1, we have $\rho^*(G,d)\ge \bcro(G,d)$, whence it suffices to show $\rho(G_0)\le \bcro(G, d)$ for every  $G_0\subset G$ with $\Delta(G_0)\ne\emptyset$ and
$d\mid\min\Delta(G_0)$. By Lemma \ref{l6}.2 and 4, we may assume that 
\[
\text{$G_0$ is not an LCN-set \quad and \quad $0\not\in G_0$.}
\]
Then $\min\Delta(G_0)=d$, because
$\max\Delta^*(G)=n-2<2d$. Choose
$U\in\Ato(G_0)$ with $\cro(U)<1$.
By Lemma~\ref{l11}, we have $U=g_1 g_2 g_3$ and
$H=\langle g_1,g_2,g_3\rangle\cong C_n^2$.
By Lemma~\ref{l10}.2, there exists a subgroup $K$ of $G$ such that $G=H\oplus K$.
Set 
\begin{align*}
&\text{$R=G_0\setminus \{g_1,g_2,g_3\}$,\quad 
	$J=H\cap\langle R\rangle$,}\\
&\text{$J_p, K_p, H_p, G_p$ be the $p$-primary components of $J, K, H, G$ respectively,}\\
	\text{ and\quad }&\text{$\pi_p\colon G\rightarrow G_p$ be the projection for every prime $p\mid n$.}
\end{align*}

In the first case that $\mathsf r(G)=n-2$, we have $\rk(K_p)=\rk(G_p)-2\le n-4$ for every
prime $p\mid n$. If $J\ne0$, one of its primary components is
nonzero, say $J_p=H_p\cap \langle \pi_p(R)\rangle\neq 0$. Apply Lemma~\ref{l10}.1 to
$G_p=H_p\oplus K_p$ and $\pi_p(R)$. There exists
$E\subset R$ with $|E|\le n-3$ such that 
$H_p\cap\langle\pi_p(E)\rangle\ne0$, whence $H\cap\langle E\rangle\ne0$, contrary to
Lemma~\ref{l12}. Thus $J=0$.

In the second case that $G\cong C_2\oplus C_{n_2}\oplus\cdots\oplus C_{n_{n-1}}$ with
$2\mid n_2\mid\cdots\mid n_{n-1}=n$,  for every odd prime $p\mid n$,  we have $\mathsf r(K_p)\le \mathsf r(G_p)-2\le \mathsf r(C_{n_2}\oplus\cdots\oplus C_{n_{n-1}})-2\le n-4$. The same argument as above shows that 
\[
\text{$J_p=0$\quad  for every odd prime $p\mid n$.}
\]
If $2\cdot J\ne0$, then $\mathsf v_2(n)\ge 2$,
\[
\text{$0\neq 2\cdot J_2\subset 2\cdot H_2\cap \langle 2\cdot \pi_2(R)\rangle$,\quad  and \quad 
	$\rk(2\cdot K_2)\le \rk(2\cdot G_2)-2\le \rk (C_{n_2}\oplus\cdots\oplus C_{n_{n-1}})-2= n-4$.}
\]  
Apply Lemma~\ref{l10}.1 to
$2\cdot H_2\oplus2\cdot K_2$ and $2\cdot \pi_2(R)$. There exists a subset $E\subset R$ with $|E|\le n-3$ such that 
$2\cdot H_2\cap \langle 2\cdot \pi_2(E)\rangle\neq 0$, whence 
$H\cap\langle E\rangle\ne0$, again contradicting
Lemma~\ref{l12}. Thus  $2\cdot J=0$.

If $J\ne0$, choose $E\subset R$ inclusion-minimal with
$J_E=H\cap\langle E\rangle\ne0$.  Choose $e\in E$ and consider
\[
\varphi:J_E\longrightarrow
\langle E\rangle/\langle E\setminus\{e\}\rangle,
\qquad \text{ defined by }\quad  j\longmapsto j+\langle E\setminus\{e\}\rangle.
\]
Its kernel is $J_E\cap \langle E\setminus\{e\}\rangle=H\cap\langle E\setminus\{e\}\rangle=0$ by the choice of $E$. Thus $J_E$ embeds in a cyclic group $\langle E\rangle/\langle E\setminus\{e\}\rangle$, whence
 $J_E$ is cyclic.
Since $2\cdot J=0$, we obtain $|J_E|=2$. Its nonzero element is
$(n/2)v$ for some $v\in \{g_1,g_2,g_3\}$, because
$(n/2)g_1,(n/2)g_2,(n/2)g_3$ are all elements of order $2$ in $H$.
After renumbering if necessary, we may assume that $J_E=\{0, (n/2)g_1\}$. 
Choose $T\in\mathcal F(E)$ with minimal length such that
$g_1^{n/2} T$ and hence $g_2^{n/2}g_3^{n/2}T$ are zero-sum.
By the minimality of $E$ and $|T|$, both sequences are atoms.
Then
\[
(g_1^{n/2} T)\cdot {g_2}^n\cdot {g_3}^n=U^{n/2}\cdot (g_2^{n/2} g_3^{n/2} T)
\]
implies that $d=n-3$ divides $n/2-2<n-3$, a contradiction.
Therefore
\[
\text{$J=H\cap \langle R\rangle  =\langle g_1,g_2,g_3\rangle\cap \langle G_0\setminus \{g_1,g_2,g_3\} \rangle=0$\quad  in both cases.
}
\] 
It follows that  $\Ato(G_0)=\Ato(\{g_1,g_2,g_3\})\cup \Ato(R)$ and that $\langle R\rangle$ is isomorphic to a subgroup of  $K$, whose rank is at most $n-3$. By Lemma~\ref{l2}.2,
\[
\mathsf m(\langle R\rangle)\le  \mathsf m(K)\le\max\{n-4,\lfloor n/2\rfloor-1\}<n-3=d.
\]
If there exists $A\in \Ato(R)$ with $\cro(A)>1$, 
then by Lemma~\ref{l11}.1, any atom $A^*\in \Ato(\supp(A))$ with $\cro(A^*)<1$ would divide $A$, whence $\supp(A)$ is an LCN-set and  
also non-half-factorial with minimum distance
at least $d$, contradicting the bound on $\mathsf m(\langle R\rangle)$.
Thus every atom over $G_0$ has cross number at most one, whence Lemma \ref{l11}.1 implies that 
\[
\{\cro(A)\colon A\in \mathcal A(G_0)\}=\{1, 3/n\}\,.
\]
 Comparing cross
numbers gives $\rho(G_0)\le n/3$, while Lemma \ref{l6}.1 implies that 
$\bcro(G,d)\ge 1+d/2=1+(n-3)/2>n/3$. The assertion now follows.
\end{proof}

\begin{proposition}\label{p4}
Let $G\cong C_{n_1}\oplus\cdots\oplus C_{n_r}$, where
$1<n_1\mid\cdots\mid n_r$ and $r\ge\max\{2,n_r-2\}$.
Then
\begin{equation}\label{e13}
 \rho^*(G,r-1)=\bcro(G,r-1)
\end{equation}
in each of the following cases:
\begin{enumerate}
 \item $r\ge n_r$;
 \item $r=n_r-1\ge3$, unless $n_1=n_r$ is a prime power;
 \item $r=n_r-2\ge 4$.
\end{enumerate}
In the exceptional case of item~2, namely $G\cong C_n^{n-1}$ with
$n\ge4$ a prime power, we have
\[
 \rho^*(G,n-2)=1+\frac{n(n-2)}{n+1}
       >\bcro(G,n-2)=1+\frac{(n-1)(n-2)}n.
\]
\end{proposition}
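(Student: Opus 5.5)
We handle the three cases separately, reducing each to results already available, and then settle the exceptional case.

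\emph{Case 1: $r\ge n_r$.} Here $d:=r-1\ge n_r-1>n_r-2$. Lemma~\ref{l2}.3 gives $r-1\in\Delta^*(G)\subset\Delta_1(G)$, so Lemma~\ref{l6}.3 applies. It shows directly that every relevant subset is an LCN-set and that $\rho^*(G,r-1)=\bcro(G,r-1)$.

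\emph{Case 2: $r=n_r-1\ge3$.} Set $d=r-1=n_r-2$ and check the hypotheses of Lemma~\ref{l7}. We need $d\ge\lfloor n_r/2\rfloor$, which holds because $n_r\ge4$. We also need $d\ge\lfloor (r+1)/2\rfloor=\lfloor n_r/2\rfloor$, which is the same condition. Lemma~\ref{l7}.2 then gives \eqref{e13} unless $n_1=n_r$ is a prime power, and it also gives $\rho^*(G,n-2)=1+(n+1)(n-2)/(n+2)$ in terms of $r=n-1$. Rewriting with $r=n-1$ turns this into $1+n(n-2)/(n+1)$. For the cross number in the exceptional case, Lemma~\ref{l5}.3 applies since $r\ge\lfloor n/2\rfloor+1$ when $n\ge4$. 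It gives $\bcro(G,n-2)=1+(n-2)(1-1/n)=1+(n-1)(n-2)/n$. The strict inequality then reduces to $n/(n+1)>(n-1)/n$, that is $n^2>n^2-1$.

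\emph{Case 3: $r=n_r-2\ge4$.} Put $n=n_r\ge6$ and $d=r-1=n-3$. Since $\rk(G)=n-2$, this is exactly the first alternative of Proposition~\ref{p3}. That proposition gives $d\in\Delta_1(G)$ and $\rho^*(G,n-3)=\bcro(G,n-3)$, which is \eqref{e13}.

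The only real work, namely excluding non-LCN subsets with minimum distance $n-3$ via Lemmas~\ref{l11} and~\ref{l12}, is already contained in Proposition~\ref{p3}. The main point to verify is therefore bookkeeping: that each case meets the hypotheses of the cited results, particularly the range conditions of Lemma~\ref{l7} in Case~2 and the translation of its exceptional formula into the variable $n$. I would double-check that $r+1=n$ and $r+2=n+1$ give exactly the stated value.
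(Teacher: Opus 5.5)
Your proposal is correct and follows the paper's proof essentially verbatim. The paper uses the same three citations: $r-1\in\Delta^*(G)\subset\Delta_1(G)$ from Lemma~\ref{l2}.3, then Lemma~\ref{l6}.3 for $r\ge n_r$, Lemma~\ref{l7}.2 for $r=n_r-1$, and Proposition~\ref{p3} for $r=n_r-2$. Your explicit use of Lemma~\ref{l5}.3 to get $\bcro(G,n-2)=1+(n-1)(n-2)/n$, together with the check $n^2>n^2-1$ in the exceptional case, fills in a step the paper leaves implicit.
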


\begin{proof}
By Lemma \ref{l2}.3, we have  $r-1\in \Delta^*(G)\subset \Delta_1(G)$.

If $r\ge n_r$, then $r-1>n_r-2$  and  the assertion follows from 
Lemma~\ref{l6}.3.

If $r=n_r-1\ge3$, then $r-1\ge\max\{\lfloor n_r/2\rfloor,\lfloor(r+1)/2\rfloor\}$ and 
Lemma~\ref{l7}.2 gives both equality and its stated
homocyclic prime-power exception. 

If $r=n_r-2\ge 4$, the assertion follows from
Proposition~\ref{p3}.
\end{proof}

\medskip\noindent\textit{Remark.}
For completeness, $\min\Delta(C_2\oplus C_4)=1$ and
$\mathsf D(C_2\oplus C_4)=5$, so
$\rho^*(C_2\oplus C_4,1)=\rho(C_2\oplus C_4)=5/2$, whereas
$\bcro(C_2\oplus C_4,1)=3/2$ by Theorem~\ref{t1}.
\medskip

\begin{lemma}\label{l13}
Let $G$ be a finite abelian group of exponent $N\ge6$,  let
$G_0\subset G$ with $\min\Delta(G_0)=N-2$ satisfy $\{g,-g\}\subset G_0$ for some $g$ with $\ord(g)=N$, and let
$E\subset G_0\setminus\{g,-g\}$ with $|E|\le N-1$ satisfy
$\langle g\rangle\cap\langle E\rangle\ne0$.
Then $|E|=N-1$, the elements of $E$ are independent of
order $N$, and
$\sum_{e\in E}e\in \{g,-g\}$.
\end{lemma}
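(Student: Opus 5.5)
I would model the argument on the proof of Lemma~\ref{l12}. Set $d=N-2$. First I would record what the hypothesis $\min\Delta(G_0)=N-2$ gives. Since $\{g,-g\}\subset G_0$, the sequence $g^N(-g)^N=(g(-g))^N$ has lengths $2$ and $N$. This is consistent with $d=N-2$.

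Next, $d\ge\lfloor N/2\rfloor$, so Lemma~\ref{l4}.1 applies to every atom $V$ over $G_0$ with $\cro(V)<1$: such a $V$ is squarefree, its terms have order $N$, and $|V|=N-d=2$. Hence $V=h(-h)$ with $\ord(h)=N$. Lemma~\ref{l4}.3 then gives a second fact: every LCN-subset of $G_0$ with at most $d+1=N-1$ elements is half-factorial.

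The argument itself runs as follows.
\begin{enumerate}
\item Choose $E'\subseteq E$ inclusion-minimal with $J:=\langle g\rangle\cap\langle E'\rangle\ne0$. As in \eqref{le-e-7}, $J$ embeds into $\langle E'\rangle/\langle E'\setminus\{e\}\rangle$ for each $e\in E'$.
\item Let $a$ be minimal with $ag\in\langle E'\rangle$, and choose a zero-sum free $T\in\mathcal F(E')$ with $\sigma(T)=-ag$ (or with $\sigma(T)=ag$, using $-g$). By minimality of $E'$, the sequences $A=g^aT$ and $A'=(-g)^{N-a}T$ are atoms with support $E'\cup\{\pm g\}$.
\item Then $A\cdot(-g)^N=(g(-g))^a\cdot A'$. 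If $E'\cup\{g\}$ and $E'\cup\{-g\}$ are half-factorial, comparing lengths gives $d\mid (a+1)-2=a-1$.
\end{enumerate}

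For step~3 I need these sets to be half-factorial. An atom with cross number less than one over them would have the form $h(-h)$ inside $E'\cup\{g\}$. I would rule this out with the minimality of $E'$, or handle it separately, much as in Lemma~\ref{l12}. The set $E'\cup\{g\}$ has at most $|E'|+1$ elements, so Lemma~\ref{l4}.3 applies directly when $|E'|\le N-2$. This is the point where $|E|\le N-1$ is used.

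Step~3 gives $a\equiv1\pmod{N-2}$, so $a\in\{1,N-1\}$ because $1\le a<N$. The roles of $g$ and $-g$ are symmetric, so WLOG $a=1$, i.e.\ $g\in\langle E'\rangle$ and $|J|=N$. The analogue of \ref{c4} then shows that $E'$ is independent and all its elements have order $N$. It remains to show that $E'=E$, that $|E|=N-1$, and that $T=\prod_{e\in E}e$.

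For this I would exploit cross numbers. The set $E'\cup\{g\}$ is half-factorial, so $\cro(A)=1$, i.e.\ $\cro(T)=1-1/N$, hence $|T|=N-1$ with all terms of order $N$. Consider $A'=(-g)^{N-1}T$, which has $\cro(A')=2-2/N$. Then $A^N$, or better $A'^N$, has length set containing $N$ and $N\cro(A')=2N-2$ (using $g^N$, $(-g)^N$ and the half-factorial $E'$). The difference is $N-2$, which is consistent and gives no contradiction. So I need something else: force $T$ to be squarefree. If some $e$ occurs with multiplicity at least $2$ in $T$, build a new atom using $e^{\mathsf v_e(T)-1}$, and a length comparison yields a difference strictly between $0$ and $N-2$. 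Independence of $E'$ will be the main tool there.

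Once $T$ is squarefree of length $N-1$ with support $E'$, we get $|E'|=N-1\ge|E|$, so $E'=E$. Finally $\sigma(T)=\sum_{e\in E}e=\mp g$.

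I expect the main obstacle to be the case $|E'|=N-1$. There Lemma~\ref{l4}.3 no longer guarantees that $E'\cup\{\pm g\}$ is half-factorial, and that step has to be replaced by a direct analysis via Lemma~\ref{l4}.2 and atoms of cross number greater than one. The squarefreeness step is also delicate.
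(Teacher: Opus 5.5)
Your proposal has two gaps, and the second is where the whole content of the lemma lies. The first is in step~2, where you claim that $A'=(-g)^{N-a}T$ is an atom ``by minimality of $E'$''. This is false. A zero-sum subsequence $(-g)^xT'$ with $T'\mid T$ only needs $\sigma(T')=xg$, and minimality does not exclude this for $x<N-a$. The configuration in the conclusion is itself a counterexample: if $E$ is independent of order $N$ and $g=\sum_{e\in E}e$, then $a=1$, $T=\prod_{e\in E}e^{N-1}$, and $A'=\bigl((-g)\prod_{e\in E}e\bigr)^{N-1}$. The paper instead takes any $\ell\in\Len\bigl((-g)^{N-a}T\bigr)$, notes that $1\le\ell\le N-a$, and gets $d\mid a+\ell-2$. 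No half-factoriality is needed for this, since all lengths of a zero-sum sequence over $G_0$ are congruent modulo $d$ by Lemma~\ref{l2}.1. Hence either $a=\ell=1$ or $\ell=N-a$. In the latter branch each of the $N-a$ atoms has the form $(-g)T_i$ with $\sigma(T_i)=g$, so $a=1$. Minimality of $E'$ then gives $\supp(T_i)=E'$, whence $T=\prod_{e\in E'}e^{N-1}$ and $\sum_{e\in E'}e=g$. This branch yields one of the two signs in the conclusion, and your plan never reaches it. Once $a=1$ is secured this way, your argument modelled on \ref{c4} is correct: injectivity of $\langle g\rangle\to\langle E'\rangle/\langle E'\setminus\{e\}\rangle$ shows that $E'$ is independent of order $N$. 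That is a genuine shortcut compared with the paper's route through Lemma~\ref{l4}.2, the bound on $\mathsf m(\langle E\rangle)$, and Lemma~\ref{l5}.1.

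The second gap is more serious. Your cross-number computations need $E'\cup\{\pm g\}$ to be half-factorial, which you only have for $|E'|\le N-2$. That case is vacuous: your own value $\cro(A')=2-2/N\notin\mathbb Z$ already contradicts half-factoriality of $E'\cup\{-g\}$, because every zero-sum sequence over a half-factorial set has integral cross number. So everything rests on the case $|E'|=N-1$, which you leave open. There $E'=E$ is a basis of a copy of $C_N^{N-1}$, and $g=\sum_e c_ee$ with all $c_e\in[1,N-1]$. One must show that either all $c_e=1$ or all $c_e=N-1$. Your squarefreeness step is unspecified. With $\sigma(T)=-g$ it also cannot hold uniformly, since $T=\prod_e e^{N-1}$ when $\sum_e e=g$.

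The missing idea is the paper's doubling argument. It applies when $V=gT$ and $B=(-g)^{N-1}T$ are both atoms and $\cro(V)>1$; if $\cro(V)=1$, then $T=\prod_e e$ and $\sum_e e=-g$ directly. In that case both $F=E\cup\{g\}$ and $F=E\cup\{-g\}$ are LCN-sets by Lemma~\ref{l4}.2. Moreover, every atom over $F$ with at least two support elements has support $F$ and cross number greater than one. Write $T^2=\bigl(\prod_{e\in c(T)}e^{N}\bigr)T_2$ with $c(T)=\{e:2\mathsf v_e(T)\ge N\}$. A length comparison inside $\prod_{f\in F}f^N$ then forces $g^2T_2$ and $(-g)^{N-2}T_2$ to be atoms. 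Factoring $V^2$ and $B^2$ gives $2\equiv1+|c(T)|\equiv2+|c(T)|\pmod d$, i.e.\ $d\mid1$, which is impossible.
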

\begin{proof}
We may assume that $E$ is inclusion-minimal, since the conclusion gives
$N-1$ elements for such a subset, so it must equal $E$.
Set $d=N-2$.
Let $a$ be the least positive integer such that $ag\in\langle E\rangle$,
and let $T\in \mathcal F(E)$ be zero-sum free such that $V:=g^aT$ is zero-sum. Since $\langle g\rangle\cap\langle E\rangle\ne0$, we obtain $a\in [1, N-1]$. By the minimality of $E$ and $a$, we have $\supp(T)=E$ and $V$ is an atom.

 Set $B=(-g)^{N-a} T$.
Every atom in a factorization of $B$ contains $-g$, so  $1\le\ell\le N-a$ for every $\ell\in \mathsf L(B)$. Thus 
$V\cdot (-g)^{N}=(g(-g))^a\cdot B$ implies  $d\mid a+\ell-2$ for every $\ell\in \mathsf L(B)$.
Consequently, 
\[
\text{either $a=\ell=1$\quad  or\quad  $\ell=N-a$.}
\]

If $\ell=N-a$, then we may suppose $B=B_1\cdot\ldots\cdot B_{N-a}$ with $B_i\in \mathcal A(G_0)$ for all $i\in [1, N-a]$. Since $T$ is zero-sum free, we obtain $\mathsf v_{-g}(B_i)=1$ for all $i\in [1, N-a]$, whence
$g\in\langle E\rangle$ and hence $a=1$. By the minimality of $E$, we have $E\subset \supp(B_i)$ for all $i\in [1,N-1]$ and hence $\mathsf v_e(T)\ge N-1$ for every $e\in E$. It follows that 
\[
B_i=(-g)\prod_{e\in E}e, \quad T=\prod_{e\in E}e^{N-1}, \text{ and }  \ord(e)=N\quad  \text{ for every } e\in E \text{ and  for every }i\in [1,N-1],
\]
whence $\sum_{e\in E}e=g$ and $E$ is independent since $T=\prod_{e\in E}e^{N-1}$ is zero-sum free.
If $\cro((-g)\prod_{e\in E}e)=\frac{1+|E|}{N}<1$, then
Lemma~\ref{l4}.1 would imply  $(-g)\prod_{e\in E}e=(-g)g$, contrary
to $g\not\in E$. Thus $\frac{1+|E|}{N}\ge 1$, forcing $|E|=N-1$. 
\medskip

It remains to consider the case where $a=1$ and
$B=(-g)^{N-1} T$ is an atom. Since $g\not\in E$, we obtain $|T|\ge 2$ and hence  $\cro(B)>1$. Its support is an LCN-set, and
$d=N-2\le\min\Delta(\supp(B))\le|E|-1$ by
Lemma~\ref{l4}.2, forcing $|E|=N-1$.
Set $L=\langle E\rangle$, which contains $g$ and has exponent $N$.
It follows from Lemma~\ref{l2}.2   that
\[
      d=N-2\le \min\Delta(\supp(B))\le \mathsf m(L)
      \le\max\{\rk(L)-1,\lfloor N/2\rfloor-1\},
\]
forcing $\rk(L)\ge N-1$. Since $L=\langle E\rangle$,
we also have $\rk(L)\le|E|\le N-1$. Thus $\rk(L)=N-1$.
 By Lemma~\ref{l4}.1 and $-g\not\in E$, we have
$\cro(V)\ge1$, while $\cro(B)>1$.
Write $L\cong C_{b_1}\oplus\cdots\oplus C_{b_{N-1}}$ with
$1<b_1\mid\cdots\mid b_{N-1}=N$. Then $\rk(L)-1=N-2\ge \lfloor N/2\rfloor+1$ and we can apply 
Lemma~\ref{l5}.1 to $L$ with $d=\rk(L)-1$.
Therefore
\[
 \cro(V)=1+s d/b_1\quad \text{ and }\quad  \cro(B)=1+t d/b_1,
 \quad  \text{ for some  } s\in \mathbb N_0\text{ and } t\in\mathbb N.
\]
Since $b_1\mid N$ and 
\[
\frac{d}{N}=\left(\frac{N-1}{N}+\cro(T)\right)-\left(\frac{1}{N}+\cro(T)\right)=\cro(B)-\cro(V)=\frac{(t-s)d}{b_1},
\]
 we obtain $b_1=N$ and hence 
$L=\langle E\rangle \cong C_{N}^{N-1}$. It follows from $|E|=N-1$ that the elements of $E$ form a basis of $L$ and hence are independent. It suffices to show $\sum_{e\in E}e\in \{g,-g\}$.

Suppose $T=\prod_{e\in E}e^{k_e}$, where $k_e\in [0, N-1]$. 
If $\cro(V)=1$, then $1+\sum_{e\in E} k_e=N$ and $|E|=N-1$ force every $k_e=1$,
implying $\sum_{e\in E}e=-g$.
Suppose $\cro(V)>1$. Consider either $F=E\cup\{g\}$ or
$F=E\cup\{-g\}$. Since $\cro(V)>1$ and $\cro(B)>1$, it follows from Lemma \ref{l4}.2 that $F$ is an LCN-set.
Let $A\in\Ato(F)$ with $|\supp(A)|\ge 2$. Since $E$ is independent, the minimality of $E$ implies that $\supp(A)=F$. If $\cro(A)=1$, then $|A|\ge |F|=N$ implies that $A=\prod_{f\in F}f$, whence $A$ divides either $V$ or $B$, a contradiction to the fact that both $V$ and $B$ are atoms of cross number larger than one.  Therefore 
\begin{equation}\label{l5.3-e-1}
\supp(A)=F \quad \text{ and }\quad\cro(A)>1 \quad \text{ for every } A\in \Ato(F) \text{ with }|\supp(A)|\ge 2.
\end{equation}

Let
\[
c(T)=\{e\in E\colon 2\mathsf v_e(T)\ge N\}.
\]
Then there exists  $T_2\in \mathcal F(E)$  such that  $$T^2=\left(\prod_{e\in c(T)}e^{\ord(e)}\right)T_2, \quad \text{ with }\mathsf v_e(T_2)<N \text{ for every }e\in E\,.$$
Then both $g^2T_2$ and
$(-g)^{N-2}T_2$ are zero-sum. 
Let $W=g^2T_2$ with $F=E\cup \{g\}$ or $W=(-g)^{N-2}T_2$ with $F=E\cup \{-g\}$, and let $\overline{W}=W^{-1}\prod_{f\in F}f^N$.
Choose $s\in \mathsf L(W)$ and $t\in \mathsf L(\overline{W})$. It follows from \eqref{l5.3-e-1} that
both $W$ and $\overline{W}$ are products of atoms with cross number at least one, and one of the atoms in the products has cross number greater than one. 
Thus $2\le s+t<\cro(W\cdot \overline{W})=\cro(\prod_{f\in F}f^N)=|F|=N$.
Since $\{N, s+t\}\subset \mathsf L(\prod_{f\in F}f^N)$,  $d=N-2$ divides $N-(s+t)$, whence $s=t=1$.
Therefore $W$ is an atom, i.e., 
\[
\text{both\  $g^2T_2$\  and\ 
	$(-g)^{N-2}T_2$\  are atoms. }
\]
Note that $V^2=(g^2T_2)\cdot \left(\prod_{e\in c(T)}e^{\ord(e)}\right)$ and $B^2=(-g)^N\cdot ((-g)^{N-2}T_2)\cdot \left(\prod_{e\in c(T)}e^{\ord(e)}\right)$. It follows that 
\[
2\equiv 1+|c(T)|\equiv 2+|c(T)| \pmod d\,,
\]
whence $d=N-2$ divides $1$, a contradiction.
\end{proof}

\begin{proposition}\label{p5}
Let $N\ge6$ be an even integer with at least two distinct prime divisors.
If $G=C_2\oplus C_{n_2}\oplus\cdots\oplus C_{n_N}$ with
$2\mid n_2\mid\cdots\mid n_N=N$, then
\[
 \rho^*(G,N-2)=\bcro(G,N-2).
\]
\end{proposition}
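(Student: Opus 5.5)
The plan is to prove the two inequalities separately. By Lemma~\ref{l2}.3 we have $N-2\in[1,\rk(G)-1]\subset\Delta^*(G)\subset\Delta_1(G)$, and Lemma~\ref{l6}.1 gives $\rho^*(G,N-2)\ge\bcro(G,N-2)$. So it suffices to show $\rho(G_0)\le\bcro(G,N-2)$ for every $G_0\subset G$ with $\Delta(G_0)\ne\emptyset$ and $N-2\mid\min\Delta(G_0)$.

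\emph{Reductions.} By Lemma~\ref{l6}.2 and~4 we may assume that $0\notin G_0$ and that $G_0$ is not an LCN-set. Since $\max\Delta^*(G)=\max\{N-2,N-1\}=N-1<2(N-2)$, we get $\min\Delta(G_0)=N-2=:d$. Take an atom $U$ with $\cro(U)<1$. By Lemma~\ref{l4}.1 it is squarefree, its terms have order $N$, and $|U|=N-d=2$; hence $U=g(-g)$ with $\ord(g)=N$. Let $P_1,\dots,P_k$ be the pairs $\{g_i,-g_i\}\subset G_0$ arising this way, and set $R=G_0\setminus\bigcup P_i$. I would aim to show that, apart from one exceptional configuration, $\langle g_i\rangle\cap\langle G_0\setminus P_i\rangle=0$ for each $i$.

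\emph{Splitting step.} By Lemma~\ref{l10}.2 we can write $G=\langle g\rangle\oplus K$. For each odd prime $p\mid N$ we have $\rk(G_p)\le N-1$, because $n_1=2$, so $\rk(K_p)\le N-2$. For $p=2$ we only get $\rk(K_2)\le N-1$. In either case Lemma~\ref{l10}.1, applied componentwise as in Proposition~\ref{p3}, produces from any nonzero intersection a set $E\subset G_0\setminus\{g,-g\}$ with $|E|\le N-1$ and $\langle g\rangle\cap\langle E\rangle\ne0$. Lemma~\ref{l13} then forces $E$ to consist of $N-1$ independent elements of order $N$ with $\sum_{e\in E}e=\pm g$. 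If no such $E$ exists, the intersections vanish. Then $\Ato(G_0)$ is the union of $\{g_i(-g_i),g_i^N,(-g_i)^N\}$ over the pairs and $\Ato(R)$.

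\emph{Bounding $\rho(G_0)$ in the split case.} Here every atom of $R$ has cross number at least one, so $R$ is an LCN-set. Factor a zero-sum sequence over $G_0$ along the decomposition. The pair parts contribute elasticity at most $N/2$, and the $R$-part contributes at most $\bcro(G,d)$ by Lemma~\ref{l6}.2; if $R$ is half-factorial, its contribution is $1$. This gives $\rho(G_0)\le\max\{N/2,\bcro(G,d)\}$. Lemma~\ref{l6}.1 gives $\bcro(G,d)\ge1+d/2=N/2$, so $\rho(G_0)\le\bcro(G,d)$.

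\emph{Main obstacle: the exceptional configuration.} Here $G_0\supseteq E\cup\{g,-g\}$ with $E$ a basis of $\langle E\rangle\cong C_N^{N-1}$ and $g=\pm\sum_{e\in E}e$. This is exactly the configuration behind the exception in Lemma~\ref{l7}.2. The intended move is to use that $N$ is not a prime power to rule it out or bound it. First, I would pass to a prime $p\mid N$ via the projections $\pi_p$, as in Proposition~\ref{p1}. I would then examine atoms of the form $g^a\prod_{e\in E}e^{a}$ and $(-g)^{N-a}\prod_{e\in E}e^{a}$ with $a=N/p^{\mathsf v_p(N)}$. The aim is to find two factorizations whose lengths differ by an amount strictly between $0$ and $d$, contradicting $d\mid\min\Delta(G_0)$.

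If this does not yield a contradiction, the fallback is to bound $\rho(G_0)$ directly. I would do this by the weighted counting used for $C_n^{n-1}$ in Lemma~\ref{l7}.2, which gives at most $1+N(N-2)/(N+1)$. I would then compare this with the lower bound $\bcro(G,N-2)\ge 1+(N-2)\sum_{p\mid 2}(\cdots)$, obtained from constructions as in Lemma~\ref{l5}.2 inside the $C_N^{N-1}$ subgroup. The fact that $N$ has at least two prime divisors should make this lower bound exceed $1+N(N-2)/(N+1)$.
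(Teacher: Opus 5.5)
Your reductions and the split case agree with the paper, but your treatment of the exceptional configuration has a genuine gap.

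First, the configuration cannot be ruled out. Take $G_0^*=\{e_1,\ldots,e_{N-1},g,-g\}\subset C_2\oplus C_N^{N-1}$ with $g=\sum_i e_i$. Its atoms are $e_i^N$, $g^N$, $(-g)^N$, $g(-g)$, $(-g)e_1\cdot\ldots\cdot e_{N-1}$ and $g^a\prod_i e_i^{N-a}$ with $a\in[1,N-1]$. Each of them takes the value $1$ modulo $N-2$ under the additive map $g^{\alpha}(-g)^{\beta}\prod_i e_i^{\gamma_i}\mapsto\alpha+\beta+\frac1N\bigl(\sum_i\gamma_i-(N-1)^2\alpha-(N-1)\beta\bigr)$. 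Since also $\{2,N\}\subset\Len(g^N(-g)^N)$, we get $\min\Delta(G_0^*)=N-2$. So no two factorizations with length difference in $[1,d-1]$ exist, and everything rests on your fallback.

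The fallback only bounds the elasticity of the core $E\cup\{g,-g\}$. You never determine what else $G_0$ contains. A bound for subsets of $\langle E\rangle\cong C_N^{N-1}$ says nothing about further elements of $G_0$ inside $\langle E\rangle$, or outside it (here $G\cong C_2\oplus\langle E\rangle$). The missing step is the paper's classification:
\begin{itemize}
\item Lemma~\ref{l13}, applied to sets $\{h\}\cup\{e_j\colon j\ne i\}$ (using $h\in\langle E\rangle$, respectively $2h\in\langle E\rangle$), yields $G_0\cap\langle E\rangle=\{e_1,\ldots,e_{N-1},g,-g\}$ and shows that every other element of $G_0$ has order $2$.
\item Two such elements $h_1,h_2$ are excluded, either by Lemma~\ref{l13} or by the atom $h_1h_2e_i^{N/2}$, whose square is $h_1^2\cdot h_2^2\cdot e_i^N$ and so produces a distance $1$.
\end{itemize}
Hence $\rho(G_0)=\rho(\{e_1,\ldots,e_{N-1},g,-g\})$.

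After this, no ``weighted counting'' is needed, and none is available, since Lemma~\ref{l7} is only quoted. Because $N$ is not a prime power, Lemma~\ref{l7}.2 (in the form of Proposition~\ref{p4}.2) gives directly $\rho(G_0)\le\rho^*(\langle E\rangle,N-2)=\bcro(\langle E\rangle,N-2)\le\bcro(G,N-2)$. Note also that your displayed lower bound with $\sum_{p\mid2}$ only gives $N/2<1+N(N-2)/(N+1)$. Your comparison would need the sum over all $p\mid N$, i.e.\ Lemma~\ref{l5}.2 applied to $\langle E\rangle$.

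There is a second, smaller gap in the splitting step. For $p=2$ you only have $\rk(K_2)\le N-1$, so Lemma~\ref{l10}.1 produces $|E|\le N$, while Lemma~\ref{l13} requires $|E|\le N-1$. Your ``in either case'' is therefore not justified. The paper passes to $2\cdot G_2=2\cdot\langle g\rangle_2\oplus2\cdot K_2$, where $\rk(2\cdot K_2)\le N-2$ because the summand $C_2$ is annihilated. It then treats separately the case $2\cdot J=0$, i.e.\ $J=\{0,(N/2)g\}$. There the identity $(g^{N/2}T)\cdot(-g)^N=(g(-g))^{N/2}\cdot((-g)^{N/2}T)$ forces $d\mid N/2-1$, a contradiction.
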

\begin{proof}
Set $d=N-2$. Then $d\in \Delta_1(G)$. Lemma~\ref{l6}.1 gives
$$\rho^*(G,d)\ge\bcro(G,d)\ge 1+d/2=N/2\,.$$
Let $G_0$ be a subset with $\Delta(G_0)\ne\emptyset$
and $d\mid\min\Delta(G_0)$. Then $\min\Delta(G_0)=d$, since
$\max\Delta^*(G)=N-1<2d$ by Lemma \ref{l2}.2.
It suffices to show $\rho(G_0)\le \bcro(G,N-2)$.

If $G_0$ is an LCN-set, Lemma~\ref{l6}.2
gives $\rho(G_0)\le\bcro(G,N-2)$.
Suppose that $G_0$ is not an LCN-set. In view of Lemma \ref{l6}.4, we may assume  $0\not\in G_0$.
 By Lemma~\ref{l4}.1, we may
choose an atom $g(-g)$ of cross number less than one, where $g\in G_0$ with $\ord(g)=N$.
We prove the bound by induction on $|G_0|$. By Lemma \ref{l10}.2, we may write
$G=\langle g\rangle\oplus K$.
Let
\[
\text{$R=G_0\setminus\{g,-g\}$ \quad \text{ and }\quad $J=\langle g\rangle\cap\langle R\rangle$.}
\] 

If $J=0$, then
$\mathcal B(G_0)=\mathcal B(\{g,-g\})\times\mathcal B(R)$.
We have $\rho(\{g,-g\})=N/2\le \bcro(G, d)$.
If $\Delta(R)=\emptyset$, then the required bound follows. If $\Delta(R)\neq\emptyset$, then
$d\mid\min\Delta(R)$ and in view of $|R|<|G_0|$,  the
induction hypothesis bounds $\rho(R)\le \bcro(G, d)$.
Thus $\rho(G_0)\le\bcro(G,d)$.

 Suppose $J\ne0$. 
 Set 
 \begin{align*}
 	&\text{$J_p, K_p, \langle g\rangle_p, G_p$ be the $p$-primary components of $J, K, \langle g\rangle, G$ respectively,}\\
 	\text{ and\quad }&\text{$\pi_p\colon G\rightarrow G_p$ be the projection for every prime $p\mid N$.}
 \end{align*}
For odd primes $p$, we have $\rk(K_p)\le N-2$, and also
$\rk(2\cdot K_2)\le N-2$ if $\mathsf v_2(N)>1$ (note that  $\mathsf v_2(N)=1$ implies $2\cdot K_2=0$). 

Suppose $J_p=0$ for all odd $p$ and $2\cdot J_2=0$. Then $2\cdot J=0$.
As $0\ne J\subset \langle g\rangle$, we obtain $|J|=2$, whence $J=\{0, (N/2)g\}$. Choose an inclusion-minimal
$E\subset R$ with $\langle g\rangle\cap\langle E\rangle\ne0$,
and choose a zero-sum free sequence $T\in \mathcal F(E)$ such that 
 $g^{N/2} T$ is zero-sum. Then both $g^{N/2} T$ and $(-g)^{N/2} T$ are  atoms with $\supp(T)=E$,  since $E$ is inclusion-minimal,
$J=\{0,(N/2)g\}$, and $T$ is zero-sum free.
The equation
\[
 (g^{N/2} T)\cdot (-g)^N=(g\cdot(-g))^{N/2}\cdot ((-g)^{N/2} T)
\]
implies that  $d=N-2$ divides $N/2-1=(N-2)/2<N-2$, a contradiction.

Thus either $J_p\ne0$ for an odd prime $p$ or $2\cdot J_2\ne0$.
Note that $G_p=\langle g\rangle_p\oplus K_p$, $2\cdot G_2= 2\cdot \langle g\rangle_2\oplus 2\cdot K_2$, $J_p=\langle g\rangle_p\cap \langle \pi_p(R)\rangle$, and $2\cdot J_2\subset 2\cdot \langle g\rangle_2\cap 2\cdot \langle \pi_2(R)\rangle$.
Applying Lemma \ref{l10}.1 to either $G_p=\langle g\rangle_p\oplus K_p$ with $\langle g\rangle_p\cap \langle \pi_p(R)\rangle\neq 0$ or $2\cdot G_2=2\cdot \langle g\rangle_2\oplus 2\cdot K_2$ with $2\cdot \langle g\rangle_2\cap \langle 2\cdot \pi_2(R)\rangle\neq 0$, we obtain that 
there exists a subset $E\subset R$ with $|E|\le \max\{\mathsf r(K_p), \mathsf r(2\cdot K_2)\}+1\le N-1$ such that 
$\langle g\rangle\cap\langle E\rangle\ne0$.
By Lemma~\ref{l13}, we obtain that $E=\{e_1,\ldots,e_{N-1}\}$ is independent with $\langle E\rangle \cong C_N^{N-1}$ and $g=\sum_{i=1}^{N-1}e_i$ 
after changing its sign, whence 
\[
 G\cong C_2\oplus C_N^{N-1}\quad \text{ and }\quad G_0\supset \{e_1,\ldots,e_{N-1},g,-g\},\qquad \text{ with }g=\sum_{i=1}^{N-1} e_i.
\]

We now determine which additional elements are possible.
If $$h=\sum_{j=1}^{N-1} c_j e_j\in(G_0\cap \langle E\rangle)\setminus \{e_1,\ldots,e_{N-1},g,-g\} \text{ with } c_i\in [1, N-1] \text{ for some }i\in [1,N-1]\,,$$ 
then $c_i g\in\langle h,e_j:j\ne i\rangle$.
Lemma~\ref{l13} forces
$h=e_i$ or $h=e_i-2g$. The first is excluded by the choice of $h$.
In the second case, $h=e_i-2\sum_{j=1}^{N-1} e_j$ has a nonzero coefficient
of every $e_j$ with $j\ne i$. Applying the same argument to $h$ with such
a $j$ would give $h=e_j$ or $h=e_j-2g$, both contrary to the
independence of  $E$. Thus $$G_0\cap \langle E\rangle=\{e_1,\ldots,e_{N-1},g,-g\}\,.$$

If $h\in G_0\setminus \langle E\rangle$ with $2h\ne0$, then write
$2h=\sum_{i=1}^{N-1} c_i e_i$ and choose $i$ with $c_i\in [1, N-1]$. Again
$c_i g\in\langle h,e_j:j\ne i\rangle$ and Lemma~\ref{l13} forces 
$\pm g=h+\sum_{j\ne i}e_j$, implying  $h\in \langle E\rangle$, a contradiction.
Thus every element of $G_0\setminus \langle E\rangle$ has order two.
Note that $ G\cong C_2\oplus \langle E\rangle$.
If $h_1,h_2\in G_0\setminus \langle E\rangle$ are distinct, then
$h_1+h_2\in \langle E\rangle$ has order $2$ and hence $h_1+h_2=\sum_{i\in I}(N/2)e_i$ for a nonempty
$I\subset[1,N-1]$. If $I$ contains distinct $i,j$, then
$(N/2)g$ belongs to the subgroup generated by $h_1,h_2$ and
the $N-3$ elements $e_k$ with $k\notin\{i,j\}$.
Lemma~\ref{l13} would force $h_1$ and $h_2$ to have
order $N$, a contradiction. If $I=\{i\}$, then
$h_1 h_2 e_i^{N/2}$ is an atom with $\left(h_1 h_2 e_i^{N/2}\right)^2=h_1^2\cdot h_2^2\cdot e_i^N$, whence 
$d$ divides $1$, a contradiction. Therefore 
\[
|G_0\setminus\langle E\rangle|\le 1\,.
\]
Any element of $G_0\setminus \langle E\rangle$ is independent of $\langle E\rangle$ since it has order $2$.
Thus $\mathcal B(G_0)=\mathcal B(\{e_1,\ldots,e_{N-1},g,-g\})$ or
$\mathcal B(G_0)=\mathcal B(\{e_1,\ldots,e_{N-1},g,-g\})\times\mathcal B(\{h\})$ with
$\ord(h)=2$. In either case, $$\rho(G_0)=\rho(\{e_1,\ldots,e_{N-1},g,-g\})\,.$$
Since $\min\Delta(\{e_1,\ldots,e_{N-1},g,-g\})=N-2=\rk(\langle E\rangle)-1$ and $N$ is not a
prime power, Proposition~\ref{p4}.2 gives
\[
\rho(G_0)
\le\rho^*(\langle E\rangle,N-2)=\bcro(\langle E\rangle,N-2)\le\bcro(G,N-2).
\]
\end{proof}

\section{Proof of the characterization theorem}\label{s6}
\begin{proof}[Proof of Theorem~\ref{t2}]
Let $H\cong C_n^r$ and suppose $\mathcal L(G)=\mathcal L(H)$.
By Lemma~\ref{l3}, we have $\mathsf D(H)=\mathsf D(G)\ge4$.
If $r\le n-3$, the assertion follows from Lemma~\ref{l8}.1.
If $r\ge n-1$, then $\mathsf D(H)\ge 4$ implies that $r\ge2$, and the assertion now follows from Theorem~\ref{t1}  and Lemma~\ref{l8}.3.

It remains to consider $r=n-2$. Since $\mathsf D(H)\ge4$,
we have $n\ge4$ and $r\ge2$. Write
\[
 G\cong C_{m_1}\oplus\cdots\oplus C_{m_R},\qquad \text{ with }\quad 
 1<m_1\mid\cdots\mid m_R,\qquad \text{ and }\quad  N=m_R.
\]
By Lemma~\ref{l3},
\begin{equation}\label{e14}
 \max\{N-2,R-1\}=n-2\,,
\end{equation}
whence $N\le n$ and $R\le n-1$. If $R\le n-2$, then
\eqref{e14} implies $N=n$. Since every $m_i$ divides $n$,
the group $G$ is isomorphic to a subgroup of $C_n^{n-2}=H$. The assertion now follows from 
Lemma~\ref{l3}.
Thus we may assume  $$R=n-1\,.$$
By Lemma~\ref{l2}.3, we have $n-2=R-1\in\Delta^*(G)$.
Lemma~\ref{l3} gives $n-2\in\Delta_1(G)=\Delta_1(H)$.
Since
\[
 n-2=\rk(H)\ge\max\{\rk(H),\lfloor n/2\rfloor,
                         \lfloor(\rk(H)+1)/2\rfloor\},
\]
Lemma~\ref{l7}.1 yields $\rho^*(H,n-2)=n/2$.
By Lemmas~\ref{l3} and~\ref{l6}.1,
\[
 \frac n2=\rho^*(H,n-2)=\rho^*(G,n-2)
 \ge\bcro(G,n-2)\ge1+(n-2)(1-1/m_1)\ge\frac n2.
\]
Consequently,
\begin{equation}\label{e15}
 m_1=2.
\end{equation}

Suppose $n\in \{4,5\}$. Since $m_1=2$ and $N\le n$,
we have $N\in\{2,4\}$. If $n=4$, then $R=3$ and hence $G$ is isomorphic to a subgroup  of $C_2\oplus C_4\oplus C_4$. By Lemma \ref{l1}.2,  we infer that $\mathsf D(G)\in \{4,6,8\}$ and $\mathsf D(H)=\mathsf D(C_4^2)=7$, a contradiction to $\mathsf D(G)=\mathsf D(H)$.
If $n=5$, then $R=4$ and hence $G$ is isomorphic to a subgroup of $C_2\oplus C_4^3$. By Lemma \ref{l1}.2,  we infer that $\mathsf D(G)\le \mathsf D(C_2\oplus C_4^3)=11$ and $\mathsf D(H)=\mathsf D(C_5^3)=13$, a contradiction to $\mathsf D(G)=\mathsf D(H)$.
Thus we may assume that $$n\ge6\,.$$

Since $r=n-2\ge 4$, Lemma~\ref{l2}.3 gives
$n-3=r-1\in\Delta_1(H)$. It follows from 
Theorem~\ref{t1} and Proposition~\ref{p3} that
\begin{equation}\label{e16}
 \rho^*(H,n-3)=\bcro(H,n-3)=1+\frac{a(n-3)}{n},\qquad \text{ where }
 a=n\sum_{p\mid n}(1-p^{-\mathsf v_p(n)})\,.
\end{equation}
 If $m_2=n$, then $H$ is isomorphic to a subgroup of $G$ and hence the assertion follows from Lemma \ref{l3}.
Thus we may assume that 
\begin{equation}\label{m_2}
m_2<n.
\end{equation}
If $N=n-1$ is a power of two, then $R=n-1=N$ and Lemma \ref{l1} implies 
\[
 \mathsf D(G)=1+\sum_{i=1}^{N}(m_i-1)
 \le2+(N-1)^2<N^2-N+1= 1+(n-2)(n-1)\le \mathsf D(H),
\]
 a contradiction to $\mathsf D(G)=\mathsf D(H)$.  Thus 
 \[
 \text{ $N$ is not a power of $2$ when $N=n-1$.} 
 \]

We now show that $\rho^*(G,n-3)=\bcro(G,n-3)$ in every remaining case.
If $N\le n-2$, then $n-3>N-2$, so Lemma~\ref{l6}.3 gives
this equality. If $N=n$, this equality follows from Proposition~\ref{p3},
since $R=n-1$ and $m_1=2$. Finally, if $N=n-1$ is not a power of two, then $2\mid N$ and   Proposition~\ref{p5} implies that $\rho^*(G,n-3)=\rho^*(G, N-2)=\bcro(G, N-2)=\bcro(G,n-3)$. Since $\lfloor N/2\rfloor \le n-3\le R-1$, it follows from  Lemma~\ref{l5}.1 that
\[
\rho^*(G,n-3)=\bcro(G,n-3)=1+\frac{b(n-3)}{m_2},
\qquad\text{for some }b\in\mathbb N_0.
\]
By Lemma~\ref{l3} and \eqref{e16}, we obtain
\[
 1+\frac{a(n-3)}{n}=\rho^*(H,n-3)=\rho^*(G,n-3)=1+\frac{b(n-3)}{m_2}\,,
\]
whence $a/n=b/m_2$. By \eqref{e16}, it is easy to conclude that $\gcd(a,n)=1$, whence $n\mid m_2$, contrary to \eqref{m_2}.
This completes the proof.
\end{proof}

\section{Concluding remarks and further problems}\label{s7}
Theorem~\ref{t1} determines $\bcro(G,r-1)$ when
$r\ge\max\{2,\exp(G)-2\}$. In this range, its value depends only
on the rank and the order of the smallest cyclic factor in the
decomposition $G\cong C_{n_1}\oplus\cdots\oplus C_{n_r}$ with
$1<n_1\mid\cdots\mid n_r$. Theorem~\ref{t2} resolves the
Characterization Problem for homocyclic finite abelian groups with
Davenport constant at least four. We conclude with three related
questions.

\medskip\noindent\textbf{1. Characterization beyond homocyclic groups.}
The general Characterization Problem asks whether
\[
 \mathcal L(G)=\mathcal L(G')\quad\Longrightarrow\quad G\cong G'
\]
for finite abelian groups $G,G'$ with $\mathsf D(G)\ge4$
\cite{Sc09b,Zhong,GGZ2026}. The groups
$C_m\oplus C_n^r$, where $1<m<n$ and $m\mid n$, provide a
natural family in which to extend the present results.
For which of these groups can comparisons of refined elasticities
determine $m$, $n$, and $r$? More generally, which integers in the
decomposition of a finite abelian group into cyclic factors can be
recovered from its refined elasticities?

\medskip\noindent\textbf{2. Cross numbers at smaller ranks and distances.}
Let $G\cong C_{n_1}\oplus\cdots\oplus C_{n_r}$ with
$1<n_1\mid\cdots\mid n_r$ and $2\le r<\exp(G)-2$.
Determine $\bcro(G,r-1)$ and identify the groups for which the
formula in Theorem~\ref{t1} remains valid. One may also ask for
$\bcro(G,d)$ when $d\in\Delta_1(G)$ and $d<r-1$.
Under the hypotheses of Lemma~\ref{l5}.1, the denominator of
$\cro(U)-1$ divides $n_{r-d}$ for every atom
$U$ of cross number greater than one over a subset whose minimum
distance is divisible by $d$. Can this restriction be combined
with estimates for the primary components to determine further
values of $\bcro(G,d)$? The argument in Section~\ref{s3} also uses
Lemma~\ref{l9}, whose hypothesis is
$\min\Delta(G_0)\ge\exp(G)-3$. Bounds for the maximum of the
factorization lengths under weaker hypotheses may allow the same
approach at smaller distances.

\medskip\noindent\textbf{3. Equality of refined elasticity and cross number.}
For every $d\in\Delta_1(G)$, we have
$\rho^*(G,d)\ge\bcro(G,d)$. Propositions~\ref{p3}, \ref{p4},
and~\ref{p5} establish equality in several cases, while
Proposition~\ref{p4} also records a family with strict inequality.
Determine further pairs $(G,d)$ for which equality holds.
The structural arguments in Sections~\ref{s4} and~\ref{s5}
concern atoms of cross number less than one at minimum distances
$\exp(G)-3$ and $\exp(G)-2$, respectively. Their supports have
three and two elements. More generally, Lemma~\ref{l4}.1
determines their lengths when the minimum distance is at least
$\lfloor\exp(G)/2\rfloor$. Descriptions of the subgroups generated
by larger supports, and of their intersections with the subgroups
generated by the remaining elements, may yield further elasticity
comparisons and applications to the Characterization Problem.

\end{document}